\documentclass[a4paper, 12pt, twoside, notitlepage,reqno]{amsart}
\usepackage{amsmath,amscd,amsfonts}
\usepackage{amssymb}
\usepackage{amsthm}
\usepackage{enumerate}
\usepackage{fullpage}

\usepackage{mathrsfs}  
\usepackage{enumitem}
\usepackage{orcidlink}
\usepackage{cleveref}

\usepackage[utf8]{inputenc}
\usepackage{csquotes}
\usepackage{verbatim}
\usepackage[T1]{fontenc}
\usepackage{textcomp}  
\usepackage{mathtools,amssymb}
\allowdisplaybreaks

\usepackage[mathscr]{eucal} 
\usepackage{multirow}

\usepackage{tikz}
\usetikzlibrary{arrows.meta, positioning, calc, fadings, shapes.geometric, shapes.misc, intersections}
\tikzset{cross/.style={cross out, draw=black, minimum size=2*(#1-\pgflinewidth), inner sep=0pt, outer sep=0pt},
cross/.default={1pt}} 

\usepackage{tikz-cd}

\newtheorem{theorem}{Theorem}[section]
\newtheorem{lemma}[theorem]{Lemma}
\newtheorem{corollary}[theorem]{Corollary}

\theoremstyle{definition}
\newtheorem{definition}[theorem]{Definition}

\newtheorem{assumption}[theorem]{Assumption}
\newtheorem{proposition}[theorem]{Proposition}
\newtheorem{notation}[theorem]{Notation}
\theoremstyle{remark}
\newtheorem{remark}[theorem]{Remark}

\numberwithin{equation}{section}

\newcommand{\R}{\mathbb{R}}
\newcommand{\ro}{r_{\mathrm{out}}}

\newcommand{\etan}{{\epsilon_\tau}}

\newcommand{\mcC}{\mathcal{C}}
\newcommand{\mcG}{\mathcal{G}}

\newcommand{\mcS}{\mathcal{S}}
\newcommand{\mcR}{\mathcal{R}}
\newcommand{\mcT}{\mathcal{T}}
\newcommand{\mcV}{\mathcal{V}}
\newcommand{\mcM}{\mathcal{M}}
\newcommand{\utan}{u_{\tan}}
\DeclareMathOperator{\sgn}{sgn}

\mathtoolsset{showonlyrefs}

\begin{document}

\title[Singular Stability Estimates for PGET]{Singular Stability Estimates for Passive Gamma Emission Tomography of Spent Nuclear Fuel}

\author[T. Heikkil\"a]{Tommi Heikkil\"a\,\orcidlink{0000-0001-5505-8136}}
\address{Computational Engineering, School of Engineering Sciences,
Lappeenranta-Lahti University of Technology LUT, Lappeenranta, Finland}
\email{tommi.heikkila@lut.fi}

\author[T. Helin]{Tapio Helin\,\orcidlink{0000-0002-5788-0008}}
\address{Computational Engineering, School of Engineering Sciences,
Lappeenranta-Lahti University of Technology LUT, Lappeenranta, Finland}
\email{tapio.helin@lut.fi}

\author[S. Holman]{Sean Holman\,\orcidlink{0000-0001-8050-2585}}
\address{Department of Mathematics, University of Manchester, Manchester, United Kingdom}
\email{sean.holman@manchester.ac.uk}

\author[S. R. Jathar]{Shubham R. Jathar\,\orcidlink{0000-0002-9109-7543}}
\address{Computational Engineering, School of Engineering Sciences,
Lappeenranta-Lahti University of Technology LUT, Lappeenranta, Finland}
\email{shubham.jathar7@gmail.com, shubham.jathar@lut.fi}

\author[J. Railo]{Jesse Railo\,\orcidlink{0000-0001-9226-4190}}
\address{Computational Engineering, School of Engineering Sciences,
Lappeenranta-Lahti University of Technology LUT, Lappeenranta, Finland}
\email{jesse.railo@lut.fi}

\subjclass[2020]{Primary 44A12; Secondary 45Q05, 35R30, 65R10}

\keywords{Passive gamma emission tomography, single-photon emission computed tomography, attenuated X-ray transform, stability estimates}

\begin{abstract}
We study the joint recovery of attenuation and source coefficients from the attenuated X-ray transform, motivated by passive gamma
emission tomography of spent nuclear fuel. Both coefficients
are constant on known, disjoint disks of equal radius
representing fuel rods. The surrounding moderator has known attenuation and zero source. We use tangential singularities of the data to give a sufficient condition on the geometry and coefficients for unique joint reconstruction from full data. For rods arranged on parallel rows separated by more than one rod diameter, with source coefficients uniformly bounded away from zero, we derive explicit reconstruction formulas and stability estimates for both coefficients. The estimates use a singular seminorm determined by two functionals per rod, computed from angular data near selected tangencies. Numerical experiments examine the approximation of these functionals from discretized data.
\end{abstract}

\maketitle
\section{Introduction}\label{sec:Intro}
Passive gamma emission tomography (PGET) is a non-invasive technique for verifying the internal configuration of spent nuclear fuel assemblies \cite{Bubba:2020:pget}. Gamma rays emitted by the fuel rods are attenuated as they pass through the rods and the moderator before reaching a row of collimated gamma-ray detectors. Mathematically, these measurements are modeled by the \emph{attenuated X-ray transform} (AtRT) $R_af$, where $f$ describes the emission and $a$ the attenuation in a domain $\Omega$.

In standard emission tomography, the attenuation is often treated as known, at least approximately, and only the source is recovered. Reconstruction methods include iterative approaches \cite{shepp1982maximum,hudson1994accelerated} and, for the attenuated X-ray transform, Novikov's inversion formula \cite{Novikov:2002}. In PGET, both $a$ and $f$ are unknown and both are of interest. Their joint recovery from the AtRT is also known as the SPECT identification problem \cite{bronnikov1999numerical} and has been studied since the late 1970s \cite{censor1979new,natterer1983computerized}. The problem is not uniquely solvable in general, so unique recovery requires additional structure. 

We impose geometric and material assumptions motivated by the design of nuclear fuel assemblies. The domain is a fixed disk containing finitely many disjoint smaller disks of equal radius at known locations. The smaller disks represent cross-sections of fuel rods. We model $a$ and $f$ as constant on each rod, with known attenuation and zero source in the moderator. Holman and Richardson \cite{Holman:Richardson:2023} analyzed the singularities associated with tangential rays and recovered attenuation jumps under a non-cancellation condition.

Our contribution has two parts. First, we give a condition involving a common tangent and a second emitting rod that ensures non-cancellation on an open family of tangencies of a rod with a nonzero attenuation jump (Lemma~\ref{lm:noncancellation}). This gives a sufficient condition for recovery from full data (Theorem~\ref{tm:recovery}). Second, for rods on sufficiently separated parallel rows with strictly positive rod sources, we select tangencies at which non-cancellation holds uniformly over the parameter space. These tangencies give explicit reconstruction formulas and stability estimates for both coefficients in a specific singular data seminorm (Theorem~\ref{tm:stability}). 
The analysis does not include noise or scattering. Stability remains open when the assembly contains a non-emitting rod, such as fresh nuclear fuel or a special absorber rod.

\subsection{Related literature}
When $a=0$, the AtRT is the classical Radon transform. For known constant attenuation in a known convex domain, removing the factor determined by the ray's exit point gives the exponential Radon transform \cite{Natterer:2001:book}, for which explicit inversion
formulas are available \cite{Tretiak:1980}. Hertle \cite{Hertle:1988} studied the identification of an unknown constant attenuation parameter when the source is also unknown. Solmon \cite{Solmon:1995} studied the corresponding identification problem, including the role of radial symmetry. Novikov established an explicit inversion formula \cite{Novikov:2002} and a range characterization \cite{Novikov:2002:range} for the two-dimensional attenuated X-ray transform. Natterer \cite{Natterer:2001:inversion} derived a closely related exact inversion formula using a different approach. Boman and Str\"omberg \cite{Boman:Stromberg:2004}
gave an alternative proof of Novikov's inversion formula and extended it to a broader class of weighted Radon transforms. Kunyansky \cite{Kunyansky:2001} developed a noniterative
SPECT reconstruction algorithm based on Novikov's inversion formula for known, spatially varying attenuation. An earlier approach used the theory of $A$-analytic functions \cite{Arbuzov:Bukhegim:Kazantsev:1998}. 
Surveys include \cite{Finch:2003,Novikov:2019,Zhou:2024:survey,GIP2D}.

Dicken \cite{dicken1999new} studied joint recovery using the stability and mapping properties of the AtRT in $L^p$ and Sobolev spaces, together with nonlinear Tikhonov regularization. Stefanov \cite{Stefanov:2014} studied uniqueness, stability, and the linearized problem, and also demonstrated non-uniqueness when both coefficients are radial. Holman and Richardson \cite{Holman:Richardson:2020} established unique recovery under a piecewise constant assumption on the attenuation and with a sufficiently regular source. Their subsequent work \cite{Holman:Richardson:2023} treated piecewise smooth sources and real-analytic interfaces under a non-cancellation condition.

PGET has been developed for nuclear safeguards and the verification of spent nuclear fuel \cite{white2018application,Virta:2024}, and has been approved by the International Atomic Energy Agency (IAEA) since 2017 \cite{white2018application}. A reconstruction method used in practice combines an iterative Levenberg--Marquardt trust-region algorithm with physical constraints and variational regularization \cite{Bubba:2020:pget,virta2022improved,Virta:2024}. Other methods have been developed, for example in \cite{svard2015applicability,white2017spect,fang2021quantitative}. The fixed rod geometry of PGET motivates our analysis of uniqueness and stability.

\subsection{Organization of the article}
This article addresses two questions in the PGET disk model.
\begin{enumerate}
    \item When can non-cancellation be established from the geometry and the coefficient assumptions?
    \item How stable are the recovered attenuation and source with respect to perturbations of the data?
\end{enumerate}
Section~\ref{sec:uniqueness} addresses the first question, and Section~\ref{sec:stability} addresses the second. We present related numerical experiments in Section~\ref{sec:numerics} to help illustrate some of the presented concepts and theoretical results. These codes are also available in \url{https://github.com/tommheik/PGET_stability}.

The following diagram summarizes the structure of the proofs of the two main results.
\begin{equation}\label{eq:proof_structure}
\begin{tikzcd}[column sep=small, row sep=normal, cells={nodes={font=\small}}, arrows={-{Straight Barb[length=1.2mm]}}]
& \text{Lemma~\ref{lm:u_tan_as_sum}} \arrow[d] & & \text{\cite{Novikov:2002}} \arrow[d] & \\
\begin{gathered}
    \text{Lemma~\ref{lm:d_q_ell_q}}\\
    \text{Remark~\ref{rm:chord_analytic}}
\end{gathered} \arrow[r] \arrow[ur]
& \text{Corollary~\ref{cor:u_tan_value}} \arrow[r] \arrow[ddr]
& \text{Lemma~\ref{lm:noncancellation}} \arrow[r]
& \text{Theorem~\ref{tm:recovery}}
& \text{Lemma~\ref{lm:window}} \arrow[d] \\
& & & \text{Proposition~\ref{prop:HR}} \arrow[u] \arrow[d]
& \text{Lemma~\ref{lm:side}} \arrow[l] \\
& & \text{Proposition~\ref{prop:existence_admissible_family}} \arrow[r]
& \text{Theorem~\ref{tm:stability}}
& \text{Remark~\ref{rm:seminorm}} \arrow[l] \\
& & & \text{Lemma~\ref{lm:L_bounds}} \arrow[u] &
\end{tikzcd}
\end{equation}
We prove Theorem~\ref{tm:recovery} in Section~\ref{sec:uniqueness}. The main step is Lemma~\ref{lm:noncancellation}, which combines the chord formulas of Lemma~\ref{lm:d_q_ell_q} with the tangential trace representation in Corollary~\ref{cor:u_tan_value} to show that the non-cancellation coefficient does not vanish identically along a family of simple tangencies near a common tangent. The singular identities of Proposition~\ref{prop:HR}, based on
\cite{Holman:Richardson:2023}, then determine the attenuation. Once the attenuation is known, Novikov's inversion formula~\cite{Novikov:2002} recovers the source.

The proof of Theorem~\ref{tm:stability} in Section~\ref{sec:stability} uses the admissible tangencies constructed in Proposition~\ref{prop:existence_admissible_family}. Under the row-separation assumption, this proposition provides, for every rod, a tangency at which the tangential trace vanishes. The positive lower bound on the rod sources therefore gives a uniform non-cancellation bound. Combining the identities of Proposition~\ref{prop:HR} with
the geometric bounds of Lemma~\ref{lm:L_bounds} and the seminorm estimates of Remark~\ref{rm:seminorm} yields the stability estimates. 
\section{Measurement geometry and mathematical model}
We introduce the forward model and the fixed geometry, then describe tangencies and backward-ray chords used in reconstruction. We also specify the coefficient space and the singular data functionals.
\subsection{The attenuated X-ray transform}
We work in the planar cross-section $\mathbb{R}^2$. For an angle $\omega\in [0,2\pi)$, we set
\begin{equation}\label{eq:theta(omega)_perp_def}
    \theta(\omega) = \begin{pmatrix}\cos\omega\\ \sin\omega\end{pmatrix}\in\mathbb{S}^1,
\qquad
\theta^\perp(\omega) = \partial_\omega \theta(\omega)
= \begin{pmatrix}-\sin\omega\\ \cos\omega\end{pmatrix}.
\end{equation}
Angles are understood modulo $2\pi$. Thus, the space of angle pairs is the torus $(\mathbb R/2\pi\mathbb Z)^2$, represented by $[0,2\pi)^2$.
Angular neighborhoods and one-sided limits are expressed using local real representatives. The directed line through $x \in \mathbb{R}^2$ with direction $\theta(\omega)$ is
\begin{equation}\label{eq:line_1} L(x,\theta(\omega)) := \{x+t\theta(\omega):t\in\mathbb R\}. \end{equation}
The attenuation $a$ and source $f$ are bounded nonnegative measurable functions that vanish outside the bounded domain $\Omega$ specified below. The \emph{beam transform} of $a$ is  
\begin{equation}\label{eq:beam}
    Da(x,\theta(\omega))=\int_0^\infty a(x+\rho\theta(\omega))d\rho.
\end{equation}
The beam transform measures the attenuation accumulated along the forward ray from $x$ in the direction $\theta(\omega)$. The \emph{attenuated X-ray transform} (AtRT) of $f$ with attenuation $a$ is 
\begin{equation}\label{eq:AtRT}
    R_a f(x,\theta(\omega))=\int_{-\infty}^\infty f(x+t\theta(\omega)) e^{-Da(x+t\theta(\omega),\theta(\omega)) } dt.
\end{equation}
A change of variable shows that
\[
    R_af(x+s\theta,\theta)=R_af(x,\theta),
    \qquad s\in\mathbb R.
\]
Thus, the data depend only on the oriented line, independently of the point chosen to represent it.

For a bounded convex Lipschitz domain $\Omega\subset \mathbb R^2$ with unit outward normal $\nu$, let 
\[\Gamma^-=\bigl\{(x,\theta)\in\partial\Omega\times\mathbb{S}^1 : \theta\cdot \nu(x)\le 0\bigr\}.\]
The transport equation 
\begin{equation}\label{eq:transport}
    \theta\cdot\nabla_x u(x,\theta) + a(x)u(x,\theta) = f(x),\qquad (x,\theta)\in\Omega\times\mathbb{S}^1, \qquad u|_{\Gamma^-} = 0,
\end{equation}
has a solution $u$ given by
\begin{equation}\label{eq:u_sol_1}
    u(x,\theta(\omega)) = \int_{-\infty}^0f(x+t\,\theta(\omega))\exp\Bigl(-\int_t^0 a(x+(t-\rho)\theta(\omega))d\rho \Bigr)dt,
\end{equation}
see \cite[Section~2]{Holman:Richardson:2023}. For every $x\in\R^2$ and $\omega\in[0,2\pi)$, a change
of variables in \eqref{eq:u_sol_1} gives the backward-ray
representation
\begin{equation}\label{eq:u_backward}
    u(x,\theta(\omega))
    =
    \int_0^\infty
        f(x-s\theta(\omega))
        \exp\left(
            -\int_0^s a(x-\sigma\theta(\omega))\,d\sigma
        \right)\,ds.
\end{equation}

For $y\in\mathbb R^2$, $\theta\in\mathbb S^1$, and $s\ge0$,
the beam transform satisfies
\[
    Da(y-s\theta,\theta)
    =\int_0^s a(y-\sigma\theta)\,d\sigma+Da(y,\theta).
\]
Consequently, setting $s=-t$ and using
\eqref{eq:u_backward} give
\begin{align*}
    \int_{-\infty}^0
       f(y+t\theta)e^{-Da(y+t\theta,\theta)}\,dt
    &=
    e^{-Da(y,\theta)}
    \int_0^\infty
       f(y-s\theta)
       \exp\left(
           -\int_0^s a(y-\sigma\theta)\,d\sigma
       \right)\,ds\\
    &=e^{-Da(y,\theta)}u(y,\theta).
\end{align*}
Splitting the integral defining $R_af$ at $t=0$ yields
\[
    R_af(y,\theta)
    =e^{-Da(y,\theta)}u(y,\theta)
     +\int_0^\infty
        f(y+t\theta)e^{-Da(y+t\theta,\theta)}\,dt.
\]
Let $x\in\partial\Omega$ be a point where the outward normal
$\nu(x)$ is defined, and suppose that
$\theta\cdot\nu(x)\ge0$. By convexity, the forward ray $\{x+t\theta:t>0\}$ does not enter $\Omega$.  The zero extension of $a$ gives $Da(x,\theta)=0$.
The forward source integral also vanishes. The data and the solution $u$ therefore satisfy
\[
    R_af(x,\theta)=u(x,\theta).
\]
\subsection{Geometric setting of PGET}\label{def:PGET-geom}

Fuel rods are enclosed in metal cladding and arranged in assemblies containing dozens or hundreds of rods. Assembly designs include rectangular and hexagonal arrangements, water channels of different sizes, and partial-length rods. For details, see \cite{Virta:2024}. For a specified assembly and axial slice, the rod positions and diameters are known. Gamma-ray measurements are acquired with collimated detectors located on a ring outside the assembly. Figure~\ref{fig:PGET_device} shows a PGET device.

\begin{figure}
    \centering
    \includegraphics[width=0.6\linewidth]{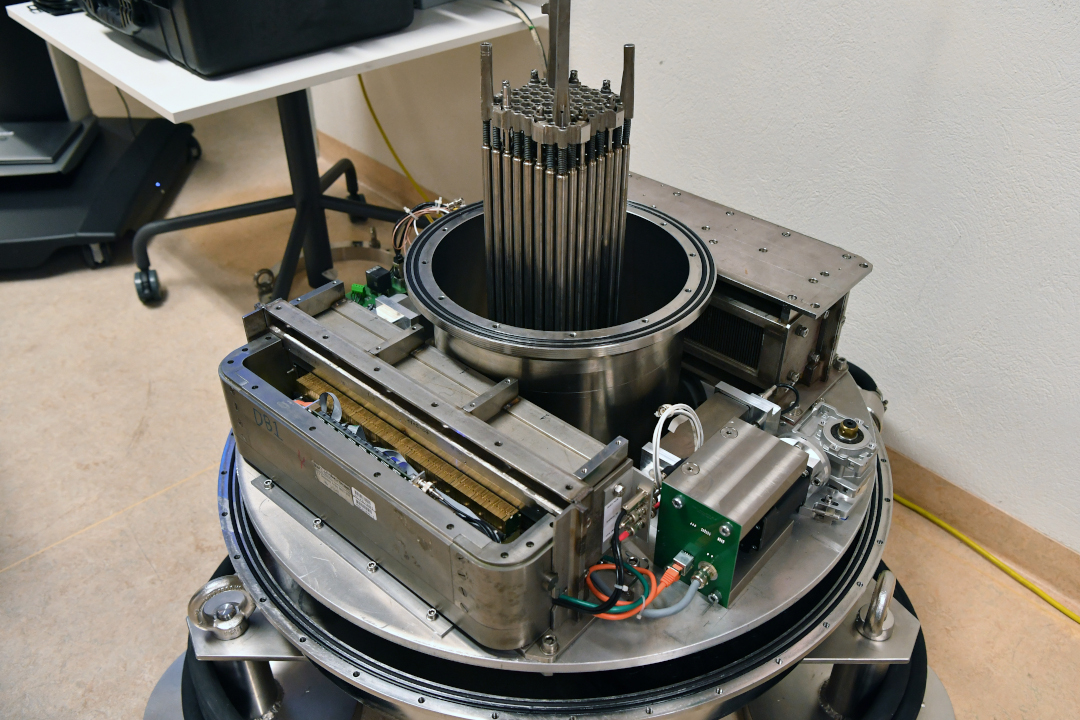}
    \caption{PGET device with its outer cover removed and a shortened dummy assembly. Measurements are collected around a horizontal slice of the assembly. Photograph by Dean Calma / IAEA.}
    \label{fig:PGET_device}
\end{figure}

We model the slice as disks of equal radius with constant coefficients, surrounded by a moderator with known attenuation and zero source. The model omits cladding, axial variation, scattering, and the finite collimator response. Known moderator-filled positions, including water channels and partial rods fully outside the field of view, belong to the moderator. Every other non-emitting rod remains a rod in the coefficient model. We also assume that the location of the assembly within the domain is known.

We now specify this geometry. Fix an outer radius $\ro>0$ and let $\Omega=B(0,\ro)$, with boundary
\begin{align}\label{eq:outer_boud_para}
    \partial\Omega=\{x(\psi): x(\psi)=\ro\bigl(\cos\psi,\sin\psi\bigr), \psi\in [0,2\pi)\}.
\end{align}
Fix the number of rods $K\ge 2$, their radius $r>0$ and centers $c_1,\dots,c_K$ such that the closed
disks
\[
    \overline{B_q}=\overline{B(c_q,r)}\subset\Omega,\qquad q=1,\dots,K,
\]
are pairwise disjoint. Equivalently, $|c_q-c_{q'}|>2r$ for $q\ne q'$ and $|c_q|+r<\ro$ for every $q$. The rod boundaries are $\partial B_q$, $q=1,\dots,K$, and we write $\Omega_0=\Omega\setminus\bigcup_{q=1}^K\overline{B_q}$
for the moderator region. The disks $B_1,\dots,B_K$ model the fuel or metal rods, and $\Omega_0$ models the surrounding water. We illustrate this in
Figure~\ref{fig:Detector-Fuelrod-setup}. These parameters define the geometry
\begin{equation}\label{eq:geometry}
    \mcG=\bigl(\ro,\;r,\;\{c_q\}_{q=1}^K\bigr),
\end{equation}
which remains fixed throughout the article. For $(\psi,\omega)\in[0,2\pi)^2$, we abbreviate
$L(\psi,\omega):=L(x(\psi),\theta(\omega))$.
\begin{figure}
    \centering
    \begin{tikzpicture}
    \begin{scope}[scale=1.5]
        \path[clip] (-4,4) rectangle (4,-1); 


        \coordinate (xpsi) at (50:3cm); 
        \draw[thick, dashed, gray, fill=gray!20!white, fill opacity=0.5] (0,0) circle (3cm);
        \node[gray] at (2.2,-0.4) {\large $\Omega_0$};

        \coordinate (c) at (-0.9, -1.05);
        \foreach \i in {0, 1, ..., 4}{
            \foreach \j in {0, 1, ..., 4}{
                \pgfmathparse{int(5 + 5*\i - \j)} 
                \draw[black, fill=blue, opacity=0.5] ($(c) + 0.5*(\i,\j)$) circle (1.5mm) node[scale=0.6] {$B_{\pgfmathresult}$};
            }
        }
    
        \draw[gray] (0,0) node[cross=3pt] {}; 
        \fill[black] (xpsi) circle (2pt) node[above right] {$x(\psi)$}; 

        \draw[gray] (50:3cm) -- node[pos=0.3, sloped, below] {$r_{\mathrm{out}}$} (0,0) -- +(0.5,0);
        \draw[gray, thick] (0:0.4) arc(0:50:0.4) node[pos=0.7, right] {\small $\psi$};

        \coordinate (xm) at ($(c) + 0.5*(1,4)$); 
        \coordinate (xstar) at ($(xm) + (120:1.5mm)$); 
        
        \draw[red] (xpsi) -- node[pos=0.45, above, sloped] {$L(\psi, \omega)$} ($(xstar)!-.5!(xpsi)$) coordinate (x'); 
        \fill (xstar) circle (1pt) node[above] {$y_{\tau}$};

        \draw[red, ->, >=stealth] ($(xpsi) + (-0.3,0)$) -- node[above, pos=0.5] {\small $\theta(\omega)$} +(-155:5mm);

    \end{scope}
    \end{tikzpicture}
    \caption{Measurement geometry with disjoint rods $B_q$ in the moderator $\Omega_0$. The angle $\psi$ specifies $x(\psi)$, and $\omega$ specifies the direction of $L(\psi,\omega)$.}
    \label{fig:Detector-Fuelrod-setup}
\end{figure}
\subsection{Tangencies of the rods}\label{ssec:tangencies}
We regard the angular parameter space as the torus
$(\mathbb R/2\pi\mathbb Z)^2$ and use $[0,2\pi)^2$ for
representatives of its points.

For a rod $B_p$, we decompose the vector $c_p-x(\psi)$ in the orthonormal frame
$\{\theta(\omega),\theta^\perp(\omega)\}$ by setting
\begin{equation}\label{def:t_p,n_p}
    t_p(\psi,\omega)
    :=
    \theta(\omega)\cdot\bigl(c_p-x(\psi)\bigr),
    \qquad
    n_p(\psi,\omega)
    :=
    \theta^\perp(\omega)\cdot\bigl(c_p-x(\psi)\bigr).
\end{equation}
The point $x(\psi)+t_p(\psi,\omega)\theta(\omega)$ is the orthogonal projection of $c_p$ onto $L(\psi,\omega)$. Thus, $|n_p(\psi,\omega)|$ is the distance from $c_p$ to $L(\psi,\omega)$. The quantity $|t_p(\psi,\omega)|$ is the distance from $x(\psi)$ to the projection point.
The line $L(\psi,\omega)$ intersects $B_p$ if $|n_p(\psi,\omega)|<r$, is tangent to $\partial B_p$ if $|n_p(\psi,\omega)|=r$, and misses $\overline{B_p}$ if $|n_p(\psi,\omega)|>r$.

We define the \emph{tangency set} of $B_p$ by
\begin{equation}\label{eq:T_p_def}
    \mcT_p
    :=
    \bigl\{
        (\psi,\omega)\in(\mathbb R/2\pi\mathbb Z)^2:
        |n_p(\psi,\omega)|=r,\;
        t_p(\psi,\omega)>0
    \bigr\}.
\end{equation}
For $\tau=(\psi,\omega)\in\mcT_p$, the tangency point on $\partial B_p$ is
\begin{equation}\label{eq:tangency_point}
    y_\tau
    :=
    x(\psi)+t_p(\psi,\omega)\theta(\omega)
    \in\partial B_p.
\end{equation}
If a line is tangent to more than one rod,
$y_\tau$ denotes the tangency point on the specified rod.

At $\tau=(\psi,\omega)\in\mcT_p$, we write $t_p=t_p(\psi,\omega)>0$. Since $y_\tau\in\Omega$ and
$|x(\psi)|=\ro$, we have
\[
    2t_p\,x(\psi)\cdot\theta(\omega)+t_p^2
    =|y_\tau|^2-\ro^2<0.
\]
Hence $x(\psi)\cdot\theta(\omega)<0$. Thus, $\theta(\omega)$ points into $\Omega$ at $x(\psi)$. The intersection $L(\psi,\omega)\cap\Omega$ therefore lies on the forward ray from $x(\psi)$ in direction $\theta(\omega)$. If $L(\psi,\omega)$ is also tangent to $\partial B_q$, the tangency point on $\partial B_q$ lies in $\Omega$. Consequently, $t_q(\psi,\omega)>0$.

Equivalently, the tangency set is given by
\[
    \mcT_p = \{ G_p = 0 \} \cap \{t_p > 0 \},
\]
where $G_p(\psi,\omega) := n_p(\psi,\omega)^2-r^2.$ 
The function $G_p$ is negative exactly when $L(\psi,\omega)$ meets the interior of $B_p$ and positive when it misses $\overline{B}_p$. Using
$\partial_\omega\theta = \theta^\perp$ and
$\partial_\omega\theta^\perp = -\theta$, we obtain by differentiation
\begin{equation} \label{eq:dGp}
    \partial_\omega G_p(\psi,\omega) = -2n_p(\psi,\omega) t_p(\psi,\omega).
\end{equation}
In particular, we have
\[
    |\partial_\omega G_p(\psi,\omega)| = 2r\,t_p(\psi,\omega)>0
\]
for any $(\psi,\omega)\in\mcT_p.$
\begin{definition}[Simple tangency]\label{def:simple_tangency}
Let $m\in\{1,\dots,K\}$ and $\tau=(\psi,\omega)\in\mcT_m$.
We call $\tau$ \emph{simple} if $L(\psi,\omega)$ is not tangent to $\partial B_q$ for any $q\ne m$. Equivalently, $\tau\in\mcT_m$ is simple if
\[
    \tau\notin\bigcup_{q\ne m}\mcT_q.
\]
We denote the set of simple tangencies of $B_m$ by $\mcS_m$.
\end{definition}

\begin{lemma}\label{lm:discrete_tangency}
Let $p,q\in\{1,\dots,K\}$. Then the tangency set $\mathcal{T}_p$ is a nonempty real-analytic one-dimensional submanifold of $(\mathbb R/2\pi\mathbb Z)^2$. If $p\neq q$, then $\mcT_p\cap\mcT_q$ consists of exactly eight points. Consequently, all but finitely many
tangencies of each rod are simple. In particular, we have
$\mcS_p\neq\emptyset$.
\end{lemma}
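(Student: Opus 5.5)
The plan has three parts: show that $\mcT_p$ is a regular level set of a real-analytic function, count the common tangencies of two rods through the classical count of common tangent lines, and then deduce simplicity.

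\textbf{Submanifold and nonemptiness.} On the open set $U_p=\{t_p>0\}\subset(\mathbb R/2\pi\mathbb Z)^2$ the function $G_p=n_p^2-r^2$ is real-analytic, since $n_p$ and $t_p$ are trigonometric polynomials in $(\psi,\omega)$. By \eqref{eq:dGp}, at every point of $\mcT_p=\{G_p=0\}\cap U_p$ we have $|\partial_\omega G_p|=2rt_p>0$. So $0$ is a regular value of $G_p|_{U_p}$, and the real-analytic implicit function theorem shows that $\mcT_p$ is a real-analytic embedded one-dimensional submanifold. For nonemptiness, fix any tangent line to $\partial B_p$. Since $\overline{B_p}\subset\Omega$, this line meets $\partial\Omega$ in two points. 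Let $x(\psi)$ be the endpoint from which the direction $\theta(\omega)$ toward the tangency point has $t_p>0$. Then $(\psi,\omega)\in\mcT_p$.

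\textbf{Counting $\mcT_p\cap\mcT_q$.} Map each $\tau\in\mcT_p\cap\mcT_q$ to its oriented line $L(\psi,\omega)$, which is a common tangent of $\partial B_p$ and $\partial B_q$. Since the disks are disjoint, $\overline{B_p}\cap\overline{B_q}=\emptyset$, and the two circles have exactly four common tangent lines: two external and two internal. Each unoriented line yields two oriented lines. For each oriented line meeting $\Omega$, the discussion preceding Definition~\ref{def:simple_tangency} gives exactly one admissible $\psi$: the entry point where $\theta$ points into $\Omega$, which is then the unique point with $t_p>0$ (and automatically $t_q>0$). The other intersection point has $t_p<0$ because both tangency points lie inside $\Omega$. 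Conversely, every such oriented common tangent arises in this way. Hence the correspondence is a bijection, and $|\mcT_p\cap\mcT_q|=4\cdot 2=8$.

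\textbf{Simplicity.} It follows that $\mcT_p\setminus\mcS_p=\bigcup_{q\ne p}(\mcT_p\cap\mcT_q)$ has at most $8(K-1)$ points. Since $\mcT_p$ is a nonempty one-dimensional manifold, it is infinite, so $\mcS_p\neq\emptyset$.

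\textbf{Main obstacle.} The delicate step is the bijection in the counting argument. One must check that the four common tangents of disjoint equal circles are distinct. The external ones are the two lines parallel to $c_q-c_p$ at distance $r$; the internal ones pass through the midpoint, and they are real because $|c_p-c_q|>2r$. One must also check that the orientation and entry-point choice produces exactly one $(\psi,\omega)$ per oriented line. I will verify this by writing the common tangency conditions as $n_p=\pm r$, $n_q=\pm r$. Subtracting gives $\theta^\perp(\omega)\cdot(c_p-c_q)\in\{0,\pm2r\}$. For each sign pattern this is solvable in $\omega$ with exactly two solutions, corresponding to opposite orientations: the value $0$ for the equal-sign (external) patterns and $\pm2r$ (solvable since $|c_p-c_q|>2r$) for the opposite-sign (internal) patterns. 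The four patterns give eight $\omega$'s in total. Each then determines $\psi$ uniquely on $\partial\Omega$ through the condition $t>0$.
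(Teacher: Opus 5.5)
Your proof is correct and follows the paper's argument. The submanifold claim comes from $\partial_\omega G_p\neq0$ on $\mcT_p$ and the real-analytic implicit function theorem. The count is four common tangents of two disjoint circles times two orientations, with the entry point fixing $\psi$. Simplicity then follows from finiteness of $\mcT_p\setminus\mcS_p$.

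Your last paragraph contains a bookkeeping slip, although it does not affect your main count.
\begin{itemize}
\item Taking the opposite orientation of the same line (the other boundary point, with $\omega+\pi$) flips the signs of both $n_p$ and $n_q$. So within a fixed sign pattern the two solutions $\omega$ are two \emph{different} tangent lines, not opposite orientations of one line.
\item The patterns $(n_p,n_q)=(r,r)$ and $(-r,-r)$ give the same two directions, $\theta(\omega)=\pm(c_q-c_p)/|c_q-c_p|$. There are therefore only six distinct $\omega$'s, not eight.
\end{itemize}
The count of eight is still right if you count pairs (sign of $n_p$, $\omega$) instead of directions. For fixed $\omega$, the condition $n_p=\pm r$ determines the line, and hence the entry point $\psi$.
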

\begin{proof}
Equation~\eqref{eq:dGp} gives $\partial_\omega G_p\neq0$ on $\mcT_p$. Therefore, the real-analytic implicit function theorem (see, for instance, \cite[Theorem 2.3.5]{primer:book}) implies that $\mcT_p$ is locally a real-analytic curve. The set $\mcT_p$ is nonempty because every point of $\partial\Omega$ lies outside $\overline B_p$ and admits a tangent line to $\partial B_p$ directed toward the tangency point.

Now let $p\neq q$. A point of $\mcT_p\cap\mcT_q$ corresponds to a
line tangent to both $\partial B_p$ and $\partial B_q$. Since $|c_p-c_q|>2r$, the two circles have exactly four common tangent lines. Each common tangent line contains points of $\Omega$ and therefore crosses $\partial\Omega$ at exactly two points. For each orientation of a common tangent line, exactly one of these boundary points is the entry point into $\Omega$. The entry point determines $\psi$. The orientation determines $\omega$. Hence $|\mcT_p\cap\mcT_q|=4\cdot2=8$. Since there are finitely many rods, only finitely many points of $\mcT_p$ fail to be simple.
\end{proof}

To define one-sided angular limits, we need a neighborhood of a
tangency containing no other tangent direction when the boundary
point $x(\psi)$ is held fixed. The concept is illustrated in Figure~\ref{fig:delta-cone}.
\begin{figure}[ht]
    \centering
    \begin{tikzpicture}
    \begin{scope}
        \path[clip] (-4,5) rectangle +(8.3,-6.3); 


        \def\p{110}
        \def\om{-53} 
        \def\omperp{\fpeval{\om+90}}
        \def\del{2} 
        \def\r{4mm} 
        
        \draw[thick, dashed, gray, fill=gray!20!white, fill opacity=0.5] (0,0) circle (4cm);
        \coordinate (xpsi) at (\p:4cm); 
        \fill[black] (xpsi) circle (2pt) node[above] {$x(\psi)$}; 

        \coordinate (xstar) at ($(xpsi) + (\om:2.5cm)$); 
        \coordinate (c) at ($(xstar) - (\omperp:\r)$); 
        \foreach \i in {-1, 0, 1}{
            \foreach \j in {-2, -1, 0}{
                \draw[black, fill=blue, opacity=0.5] ($(c) + 1.3*(\i,\j)$) circle (\r);
            }
        }
        \node at (c) {\small $B_m$}; 

        \fill (xstar) circle (2pt) node[above right] {$y_\tau$};

        \draw[dashed, thick, red, fill=red!50!white, fill opacity=0.5] ($(xpsi) + (\om-\del:10cm)$) -- (xpsi) -- ($(xpsi) + (\om+\del:10cm)$) -- cycle;
        
        \draw[red] (xpsi) -- ($(xstar)!-1.5!(xpsi)$) coordinate (x'); 

        \draw[red, ->, >=stealth] ($(xpsi) + (-0.1,-0.2)$) -- node[below, pos=0.6, sloped] {\small $\theta(\omega_0)$} +(\om:6mm);
        
    \end{scope}
    
    \end{tikzpicture}
    \caption{An angular neighborhood of a tangency
    $\tau=(\psi,\omega_0)$. With the boundary point $x(\psi)$ fixed,
    the direction $\omega_0$ is isolated among the directions tangent
    to any rod.}
    \label{fig:delta-cone}
\end{figure}
\begin{lemma}[Angular window]\label{lm:window}
Let $m\in\{1,\dots,K\}$ and let $\tau=(\psi,\omega_0)\in\mcT_m.$
Then there exists $\delta_\tau\in(0,\pi)$, depending only on the geometry
$\mcG$ and the tangency $\tau$, such that
\begin{equation}\label{eq:simple_tangency}
    (\psi,\omega)\notin\bigcup_{q=1}^K\mcT_q
    \qquad
     \text{whenever } \qquad 0<|\omega-\omega_0|<\delta_\tau.
\end{equation}
Moreover, for every $|\omega-\omega_0|<\delta_\tau$, we have
\[
    t_m(\psi,\omega)>0,
    \qquad
    \theta(\omega)\cdot x(\psi)<0.
\]
\end{lemma}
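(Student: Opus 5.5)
With $\psi$ fixed, I would study for each rod $q$ the one-variable function $g_q(\omega):=G_q(\psi,\omega)=n_q(\psi,\omega)^2-r^2$ on the circle $\mathbb R/2\pi\mathbb Z$. The first task is to show that the set of directions tangent to $\partial B_q$ is finite. The point $x(\psi)$ lies outside $\overline{B_q}$, since $|c_q|+r<\ro$. Hence $n_q(\psi,\omega)=\theta^\perp(\omega)\cdot(c_q-x(\psi))=|c_q-x(\psi)|\sin(\alpha_q-\omega)$, where $\alpha_q$ is the direction angle of $c_q-x(\psi)$. The equation $|n_q|=r$ then says $|\sin(\alpha_q-\omega)|=r/|c_q-x(\psi)|\in(0,1)$, which has exactly four solutions $\omega$ modulo $2\pi$. (Alternatively, $g_q$ is a nonconstant trigonometric polynomial, so its zeros are finite.) The directions of tangency with $t_q>0$ form a subset of these, so for fixed $\psi$ the set
\[
Z_\psi:=\{\omega:(\psi,\omega)\in\textstyle\bigcup_{q=1}^K\mcT_q\}
\]
is finite, with at most $4K$ points, and it contains $\omega_0$.

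Next I would take $\delta_1$ to be the minimal circular distance from $\omega_0$ to the points of $Z_\psi\setminus\{\omega_0\}$. If that set is empty, I set $\delta_1=\pi$. Any $\delta\le\delta_1$ then gives \eqref{eq:simple_tangency}. This quantity depends only on $\psi$, $\omega_0$, and the centers and radius, that is, only on $\mcG$ and $\tau$.

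For the two inequalities I would use continuity at $\omega_0$. At $\omega_0$ we have $t_m(\psi,\omega_0)>0$ by definition of $\mcT_m$. The discussion after \eqref{eq:tangency_point} also gives $\theta(\omega_0)\cdot x(\psi)<0$. Both $t_m(\psi,\cdot)$ and $\theta(\cdot)\cdot x(\psi)$ are continuous, in fact trigonometric, so they stay of the same sign on some open interval around $\omega_0$. To make the radius explicit, I would note that both are $1$-Lipschitz multiples: $|\partial_\omega t_m|=|\theta^\perp\cdot(c_m-x(\psi))|\le|c_m-x(\psi)|$, and $|\partial_\omega(\theta\cdot x(\psi))|\le\ro$. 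I would therefore take $\delta_2=\min\{t_m(\psi,\omega_0)/|c_m-x(\psi)|,\;|\theta(\omega_0)\cdot x(\psi)|/\ro\}$. Finally I set $\delta_\tau=\min\{\delta_1,\delta_2,\pi/2\}\in(0,\pi)$. Strict inequality in the bounds is obtained by replacing $\delta_2$ with $\delta_2/2$ if needed.

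There is no real obstacle here. The only point requiring care is the finiteness of $Z_\psi$, and in particular that $\omega_0$ is an isolated zero. This follows either from the explicit sine form, or from \eqref{eq:dGp}, which gives $\partial_\omega G_m\ne0$ at $\omega_0$ so that the zero is simple. The remaining work is bookkeeping on the circle.
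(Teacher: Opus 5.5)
Your proposal is correct and follows essentially the same route as the paper. Both arguments use the representation $n_q(\psi,\omega)=|c_q-x(\psi)|\sin(\alpha_q-\omega)$ to show that only finitely many tangent directions emanate from the fixed point $x(\psi)$, so that $\omega_0$ is isolated, and then obtain $t_m>0$ and $\theta(\omega)\cdot x(\psi)<0$ by continuity. Your explicit Lipschitz radius $\delta_2$ only makes the paper's continuity step quantitative; the halving of $\delta_2$ is unnecessary, since the strict bound $|\omega-\omega_0|<\delta_2$ already gives the strict inequalities.
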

\begin{proof}
Fix $q\in\{1,\dots,K\}$. Since $\overline B_q\subset\Omega$ and $x(\psi)\in\partial\Omega$, we have $b_q = |c_q - x(\psi)| > r$. Choose $\alpha_q$ such that $c_q - x(\psi) = b_q \theta(\alpha_q)$. Then $n_q(\psi,\omega) = b_q\sin(\alpha_q-\omega)$, and we obtain
\[
    G_q(\psi,\omega) = b_q^2 \sin^2(\alpha_q-\omega) - r^2.
\]
Thus, the full line $L(\psi,\omega)$ is tangent to $\partial B_q$ if and only if 
\[
    |\sin(\alpha_q - \omega)| = \frac{r}{b_q}.
\]
Since $0 < r / b_q < 1$, this equation has only finitely many solutions
modulo $2\pi$. Membership in $\mcT_q$ also requires $t_q>0$. Since there are finitely many rods, the set of tangent directions from the fixed point $x(\psi)$ is finite, and $\omega_0$ is isolated in it.
Choose $\delta_\tau>0$ small enough to exclude all other such directions. By continuity and the inward-pointing property at $\tau$, it can also be chosen so that $\delta_\tau<\pi$ and
\[
    t_m(\psi,\omega)>0,
    \qquad \theta(\omega)\cdot x(\psi)<0,
    \ \text{when} \ |\omega-\omega_0|<\delta_\tau.\qedhere
\]
\end{proof}
The restriction $\delta_\tau<\pi$ allows the window to be represented
by a single interval without identifying two of its points modulo
$2\pi$. All angle differences and reflections below are taken
in this local coordinate.

We next identify the side of a tangency on which a small angular
perturbation causes the ray to enter the rod. This is the side from which the singular limits in Section~\ref{ssec:functionals} are taken.
\begin{definition}[Singular side]\label{def:side}
Let $m \in \{1,\dots,K\}$ and let $\tau = (\psi,\omega_0)\in\mcT_m.$ We define
\begin{equation}\label{eq:epsilon_tau}
    \etan := \sgn n_m(\psi,\omega_0) \in\{+1,-1\}.
\end{equation}
This is well defined because $|n_m(\psi,\omega_0)| = r>0$. The \emph{singular side} of $\tau$ is the set of directions $\omega$ in the angular window of Lemma~\ref{lm:window} with $\etan(\omega-\omega_0)>0$. By Lemma~\ref{lm:side}(ii), this is the side on which
$L(\psi,\omega)$ enters $B_m$. At a simple tangency with
$F_m^{(a,f)}(\tau)\ne0$, Proposition~\ref{prop:HR} identifies
the nonzero square-root singular coefficient of the data on
this side. The term singular side refers to this geometric
choice, including when the leading coefficient vanishes. We write $\omega\to\omega_0^{\etan}$ for the one-sided limit from the singular side.
\end{definition}

The following lemma identifies the inward normal at the tangency
point and verifies that the singular side is the side on which the
perturbed line enters the rod.
\begin{lemma}\label{lm:side}
Let $\tau=(\psi,\omega_0)\in\mcT_m$, let $y_\tau$ be its tangency point,
and let $\delta_\tau$ be as in Lemma~\ref{lm:window}. Then the following statements hold.
\begin{enumerate}[label=\textup{(\roman*)}]
    \item\label{it:side_geom} We have $ c_m-y_\tau
        =
        \etan r\,\theta^\perp(\omega_0).$
    Thus, $\etan \theta^\perp(\omega_0)$ is the unit normal to
    $\partial B_m$ at $y_\tau$ pointing towards the center $c_m$.

    \item\label{it:side_cut}
    For $ 0<|\omega-\omega_0|<\delta_\tau,$
    the line $L(x(\psi),\theta(\omega) )$ intersects the interior of
    $B_m$ if and only if $ \etan (\omega-\omega_0)>0.$
\end{enumerate}
\end{lemma}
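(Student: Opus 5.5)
Both parts reduce to elementary computations in the orthonormal frame $\{\theta(\omega),\theta^\perp(\omega)\}$, together with the formula $G_m(\psi,\omega)=n_m(\psi,\omega)^2-r^2$ and the derivative identity \eqref{eq:dGp}.

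For \ref{it:side_geom}, decompose $c_m-x(\psi)=t_m\theta(\omega_0)+n_m\theta^\perp(\omega_0)$ using \eqref{def:t_p,n_p}, where $t_m=t_m(\psi,\omega_0)$ and $n_m=n_m(\psi,\omega_0)$. By \eqref{eq:tangency_point}, $y_\tau=x(\psi)+t_m\theta(\omega_0)$. Subtracting gives
\[
c_m-y_\tau=n_m\,\theta^\perp(\omega_0).
\]
Since $|n_m|=r$ on $\mcT_m$, we have $n_m=\etan r$ by \eqref{eq:epsilon_tau}, which is the claimed identity. Dividing by $|c_m-y_\tau|=r$ shows that $\etan\theta^\perp(\omega_0)$ is the unit vector from $y_\tau$ toward $c_m$, i.e. the inward unit normal to $\partial B_m$ at $y_\tau$.

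For \ref{it:side_cut}, the line $L(\psi,\omega)$ meets the interior of $B_m$ exactly when $G_m(\psi,\omega)<0$. With $\psi$ fixed, write $g(\omega)=G_m(\psi,\omega)$. Then $g(\omega_0)=0$, and by \eqref{eq:dGp},
\[
g'(\omega_0)=-2n_m t_m=-2\etan r\,t_m(\psi,\omega_0),
\]
whose sign is $-\etan$ because $t_m>0$. Nonzero derivative alone only gives the sign of $g$ near $\omega_0$; the step that needs care is to get it on the whole punctured window $0<|\omega-\omega_0|<\delta_\tau$.

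By Lemma~\ref{lm:window}, on the window $t_m(\psi,\omega)>0$ and $(\psi,\omega)\notin\mcT_m$ for $\omega\ne\omega_0$. Hence any zero of $g$ in the window with $\omega\neq\omega_0$ would satisfy $|n_m|=r$ and $t_m>0$, i.e. lie in $\mcT_m$, which is impossible. So $g$ does not vanish on each of the two subintervals $(\omega_0-\delta_\tau,\omega_0)$ and $(\omega_0,\omega_0+\delta_\tau)$. By continuity and the intermediate value theorem, $g$ has constant sign on each subinterval. That sign is read off from $g'(\omega_0)$: it is $-\etan\sgn(\omega-\omega_0)$. Therefore $g<0$ if and only if $\etan(\omega-\omega_0)>0$, which is \ref{it:side_cut}.

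Alternatively, using the explicit form $n_m(\psi,\omega)=b_m\sin(\alpha_m-\omega)$ from the proof of Lemma~\ref{lm:window}, one could check the sign directly. The continuity argument above avoids this trigonometric bookkeeping.
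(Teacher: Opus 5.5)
Your proposal is correct and follows essentially the same argument as the paper. In (i) you read off $c_m-y_\tau=n_m(\psi,\omega_0)\,\theta^\perp(\omega_0)$ directly from the frame decomposition. The paper instead first obtains $c_m-y_\tau=\pm r\,\theta^\perp(\omega_0)$ from tangency and then fixes the sign by pairing with $\theta^\perp(\omega_0)$, which is only a cosmetic difference. In (ii) you use exactly the paper's argument: $\partial_\omega G_m(\psi,\omega_0)=-2\etan r\,t_m(\psi,\omega_0)\neq 0$ fixes the sign of $G_m$ near $\omega_0$, and Lemma~\ref{lm:window} rules out zeros of $G_m$ on each component of the punctured window, so that sign holds on the whole window.
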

\begin{proof}
\ref{it:side_geom} Since the line $L(x(\psi),\theta(\omega_0))$ is tangent to $\partial B_m$ at $y_\tau$, we have $\theta(\omega_0)\cdot(c_m-y_\tau)=0$.
The relation $y_\tau\in\partial B_m$ gives $|c_m-y_\tau|=r$.
Consequently, $c_m-y_\tau=\pm r\,\theta^\perp(\omega_0)$.  Using 
$ y_\tau= x(\psi)+t_m(\psi,\omega_0)\theta(\omega_0)$ and taking the inner product with
$\theta^\perp(\omega_0)$ gives
\[
\theta^\perp(\omega_0)\cdot(c_m-y_\tau)=\theta^\perp(\omega_0)\cdot(c_m-x(\psi))=n_m(\psi,\omega_0).
\] By the definition of $\epsilon_\tau$, we have $n_m(\psi,\omega_0)=\etan r$. This proves \ref{it:side_geom}.

\ref{it:side_cut} Recall that $G_m(\psi,\omega) =
    n_m(\psi,\omega)^2-r^2.$
The line $L(x(\psi),\theta(\omega))$ intersects the interior of $B_m$
if and only if $ G_m(\psi,\omega)<0.$
At the tangency, $ G_m(\psi,\omega_0)=0.$ Evaluating \eqref{eq:dGp} at this configuration gives
\begin{align*}
    \partial_\omega G_m(\psi,\omega_0) &= -2n_m(\psi,\omega_0) t_m(\psi,\omega_0) =
    -2 \etan r\,t_m(\psi,\omega_0) \neq 0,
\end{align*}
because $t_m(\psi,\omega_0)>0$. The nonzero derivative determines the sign of $G_m$ near $\omega_0$. On each component of the punctured window, $G_m$ has no zero. Indeed, since $t_m>0$ there, a zero of $G_m$ would imply $(\psi,\omega)\in\mcT_m$, contradicting Lemma~\ref{lm:window}. The sign of $G_m$ is therefore constant on each component and agrees throughout the window with the sign of $-\epsilon_\tau(\omega-\omega_0)$. We conclude that
\[
    G_m(\psi,\omega)<0
    \quad\Longleftrightarrow\quad
    \epsilon_\tau(\omega-\omega_0)>0,
\]
which proves \ref{it:side_cut}.
\end{proof}
We collect the geometric quantities attached to a tangency.
\begin{notation}\label{not:tangency_data}
Let $m\in\{1,\dots,K\}$. For every $\tau =(\psi,\omega) \in \mcT_m$, we write
\[
    x_\tau:=x(\psi),
    \qquad
    \theta_\tau:=\theta(\omega),
    \qquad
    L_\tau:=t_m(\psi,\omega).
\]
At a configuration tangent to more than one rod, the notation is understood relative to the specified rod. The tangency point $y_\tau$ is defined by \eqref{eq:tangency_point}. For each tangency under consideration, fix a choice of $\delta_\tau$ satisfying Lemma~\ref{lm:window}. We use $\epsilon_\tau$ as defined in \eqref{eq:epsilon_tau}. These definitions give
\[
    y_\tau = x_\tau + L_\tau \theta_\tau, \qquad L_\tau=|y_\tau-x_\tau|>0.
\]
\end{notation}
\begin{lemma}\label{lm:L_bounds}
Let $m\in\{1,\dots,K\}$. For every $\tau \in \mcT_m$, we have
\[0<L_{\min}\le L_{\tau}\le L_{\max}<\infty,
\]
where \[ L_{\min} := \min_{1\le q\le K}\bigl(\ro-|c_q|-r\bigr),\quad L_{\max} := 2\ro.\]
The bounds $L_{\min}$ and $L_{\max}$ depend only on the geometry $\mcG$.
\end{lemma}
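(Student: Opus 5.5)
The proof reduces both bounds to elementary distance estimates between the points $x_\tau\in\partial\Omega$ and $y_\tau\in\partial B_m$. By Notation~\ref{not:tangency_data}, $L_\tau=|y_\tau-x_\tau|$. So it suffices to bound the distance between a point on the outer circle $\partial\Omega$ and a point on the closed rod $\overline{B_m}$ from below and above, uniformly over $\tau\in\mcT_m$ and over $m$.

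For the upper bound, both $x_\tau$ and $y_\tau$ lie in $\overline\Omega=\overline{B(0,\ro)}$: indeed $|x_\tau|=\ro$ and $y_\tau\in\partial B_m\subset\Omega$. The triangle inequality then gives
\[
L_\tau\le |x_\tau|+|y_\tau|< 2\ro = L_{\max}.
\]

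For the lower bound, $y_\tau\in\partial B_m$ gives $|y_\tau|\le |c_m|+|y_\tau-c_m|=|c_m|+r$. Hence
\[
L_\tau=|x_\tau-y_\tau|\ge |x_\tau|-|y_\tau|\ge \ro-|c_m|-r\ge L_{\min}.
\]
Positivity $L_{\min}>0$ follows from the standing assumption $|c_q|+r<\ro$ for every $q$. Since the minimum is taken over finitely many rods, it is strictly positive.

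Finally, $L_{\min}$ and $L_{\max}$ are built only from $\ro$, $r$ and $\{c_q\}$, so they depend only on $\mcG$. There is no real obstacle here. The only point needing care is that the lower bound is uniform over all tangencies and all rods, which is why we take the minimum over $q$ rather than using $m$ alone.
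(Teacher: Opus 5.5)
Your proposal is correct and follows essentially the same route as the paper. The lower bound comes from the distance between $\partial\Omega$ and $\overline{B_m}$, which you make explicit with the reverse triangle inequality, and the upper bound comes from $x_\tau,y_\tau\in\overline\Omega$ together with $\operatorname{diam}(\Omega)=2\ro$; your strict inequality $L_\tau<2\ro$ also holds since $\overline{B_m}\subset\Omega$, though only the non-strict bound is needed.
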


\begin{proof}
Since $x_\tau\in\partial\Omega$ and $y_\tau\in\partial B_m$, we obtain the lower bound
\[
    L_\tau
    =
    |y_\tau-x_\tau|
    \ge
    \operatorname{dist}(\partial\Omega,\overline B_m)
    =
    \ro-|c_m|-r
    \ge L_{\min}>0.
\]
Since  $x_\tau,y_\tau\in\overline\Omega$, we have
$L_\tau\le\operatorname{diam}(\Omega)=2\ro=L_{\max}$, which gives the upper bound.
\end{proof}

\subsection{Chords along a backward ray}\label{ssec:chords}
The representation \eqref{eq:u_backward} expresses the solution $u$ of \eqref{eq:transport} at $y$ as an integral along the backward ray from $y$ in direction $-\theta(\omega)$.
We describe the entry parameters and chord lengths for the rods crossed by this ray.

For $y\in\R^2$ and $\omega\in[0,2\pi)$, we define the \emph{backward ray}
\begin{equation}\label{def:back-ray}
    \mcR(y,\omega):=\bigl\{y-s\,\theta(\omega):s\ge0\bigr\}.
\end{equation}
For $q\in\{1,\dots,K\}$, we set
\begin{equation}\label{def:rho_sigma}
    \sigma_q(y,\omega):=-\bigl(c_q-y\bigr)\cdot\theta(\omega),
    \qquad
    \rho_q(y,\omega):=(c_q-y)\cdot\theta^\perp(\omega).
\end{equation}
The quantity $|\rho_q(y,\omega)|$ is the distance from $c_q$ to the line $L(y,\theta(\omega))$. The quantity $\sigma_q(y,\omega)$ is the coordinate of $c_q-y$ in the direction $-\theta(\omega)$. The functions $\rho_q$ and $\sigma_q$ are polynomials in the coordinates of $y$, $\cos\omega$ and $\sin\omega$, and are therefore real-analytic. If $y\in L(\psi,\omega)$, then $\rho_q(y,\omega)=n_q(\psi,\omega)$. The orthogonal decomposition gives $|c_q-y|^2=\sigma_q(y,\omega)^2+\rho_q(y,\omega)^2$.
\begin{figure}[ht]
    \centering
    \begin{tikzpicture}
    \begin{scope}[scale=1.0]
        \path[clip] (-2,5) rectangle (10.5,-2); 

    
        \coordinate (cp) at (0,0);
        \coordinate (cq) at (6,3);
        \def\r{13mm} 
        \def\t{96} 
        \def\tperp{\fpeval{\t+90}}
        \coordinate (xstar) at ($(cp) +(\t:\r)$); 

        \draw[black, fill=blue, opacity=0.5, thick] (cp) circle (\r);
        \node[below left=.1*\r of cp] {\large $B_m$};
        
        \draw[black, fill=blue, opacity=0.5, thick, name path=Omegaq] (cq) circle (\r);
        \fill (cq) circle (2pt) node[above] {$c_q$};
        \node[above right=.1*\r of cq] {\large $B_q$};
        \draw[white, dashed] (cq) -- node[pos=0.5,above] {$r$} +(155:\r);

        
        \coordinate (y) at ($(xstar) - (\tperp:8cm)$);
        \coordinate (xpsi) at ($(xstar) - (\tperp:10cm)$);
        \draw[red, dashed, name path=ray] (y) -- node[pos=0.6, below, sloped] {$\mcR(y,\omega)$} ($(xstar)!-0!(y)$) coordinate (y'); 
        \fill (xstar) circle (2pt) node[above] {$y$};
        \fill[gray!50!black] (xpsi) circle (2pt) node[above] {$x(\psi)$};
        \draw[gray, dotted] (xpsi) -- +(\tperp:2cm);

        \draw[red, ->, >=stealth] (xstar) -- +(\tperp:6mm) node[left] {\small $\theta(\omega)$};

        \draw[blue] ($(xstar)!(cq)!(y)$) coordinate (rho) -- node[pos=0.5, right] {$|\rho_{q}|$} (cq);
        \node[red, rotate=\tperp] at (rho) {\small |};
        \node[below=4pt, red] at (rho) {$\sigma_{q}$};

        \path[name intersections={of=Omegaq and ray, by={sminus, splus}}];
        \draw[red, thick, |-|] (splus) node[below] {\small $s_+$} -- (sminus) node[below] {\small $s_-$} ;

        \path[name path=rho_pq] (cq) -- ($(rho)!-3!(cq)$);
        \path [name intersections={of=rho_pq and Omegaq, by=ystar}];
    \end{scope}
    
    \end{tikzpicture}
    \caption{Geometry of the intersection of the backward ray
    $\mcR(y,\omega)$ with a rod $B_q$.  When the backward ray crosses $B_q$ transversely, the parameters $s_-$ and $s_+$ give the distances from $y$ to the first and second intersection points with $\partial B_q$, respectively.}
    \label{fig:sigma_rho}
\end{figure}
\begin{lemma}\label{lm:d_q_ell_q}
Let $\omega\in[0,2\pi)$, $y\in\R^2$ and $q\in\{1,\dots,K\}$ satisfy $|c_q-y|>r$. Then the following statements hold.
\begin{enumerate}[label=\textup{(\roman*)}]
\item\label{itm:chord_condi} The line $L(y,\theta(\omega))$ meets $\partial B_q$ if and only if $|\rho_q(y,\omega)|\le r$.
\item\label{itm:chord_tangent}
The line $L(y,\theta(\omega))$ is tangent to $\partial B_q$ if and only if $|\rho_q(y,\omega)|=r$.
\item\label{itm:chord_trans} The backward ray $\mcR(y,\omega)$ crosses $B_q$ transversely if and
only if
\begin{equation}\label{eq:backward_crossing_condition}
    |\rho_q(y,\omega)|<r
    \qquad\text{and}\qquad
    \sigma_q(y,\omega)>0.
\end{equation}
Under \eqref{eq:backward_crossing_condition}, let $d_q(y,\omega)$ denote the smallest $s\ge0$ for which $y-s\theta(\omega)\in\partial B_q$. Let $\ell_q(y,\omega)$ denote the length of the chord $\mcR(y,\omega)\cap B_q$.
These quantities are given by
\begin{equation}\label{eq:def:d_q:ell_q}
    d_q(y,\omega)
    =
    \sigma_q(y,\omega)
    -
    \sqrt{r^2-\rho_q(y,\omega)^2},
    \qquad
    \ell_q(y,\omega)
    =
    2\sqrt{r^2-\rho_q(y,\omega)^2}.
\end{equation}
\end{enumerate}
\end{lemma}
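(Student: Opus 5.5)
The plan is to parametrize the line $L(y,\theta(\omega))$ by $s\in\mathbb R$ as $z(s)=y-s\theta(\omega)$ and reduce every statement to the quadratic $P(s):=|z(s)-c_q|^2-r^2$. Using the orthogonal decomposition $c_q-y=-\sigma_q\theta(\omega)+\rho_q\theta^\perp(\omega)$, which follows from \eqref{def:rho_sigma}, we get $z(s)-c_q=(s+\sigma_q)\,(-\theta(\omega))\cdot(-1)\ldots$; more precisely $y-c_q-s\theta=(\sigma_q-s)\theta-\rho_q\theta^\perp$. Hence
\[
    P(s)=(s-\sigma_q)^2+\rho_q^2-r^2 .
\]
All three items will be read off from this expression.

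For \ref{itm:chord_condi}, the line meets $\partial B_q$ iff $P$ has a real root. This happens iff $r^2-\rho_q^2\ge0$, that is, $|\rho_q|\le r$. For \ref{itm:chord_tangent}, tangency means the line meets the circle in exactly one point, i.e.\ $P$ has a double root. This is equivalent to $\rho_q^2=r^2$; alternatively, tangency is equivalent to the distance from $c_q$ to the line being $r$, and this distance equals $|\rho_q|$. When $|\rho_q|<r$, the roots are
\[
    s_\pm=\sigma_q\pm\sqrt{r^2-\rho_q^2},
\]
and $P<0$ exactly on $(s_-,s_+)$, so the chord of the full line through $B_q$ is $\{z(s):s_-<s<s_+\}$.

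For \ref{itm:chord_trans}, interpret ``the backward ray crosses $B_q$ transversely'' as: the ray $s\ge0$ meets the open disk, with $|\rho_q|<r$ so the line is not tangent. Here the hypothesis $|c_q-y|>r$ enters: $P(0)=|y-c_q|^2-r^2>0$, so $0\notin[s_-,s_+]$. Consequently either $0<s_-$ or $s_+<0$. Since $s_-+s_+=2\sigma_q$, the first case is equivalent to $\sigma_q>0$ and the second to $\sigma_q<0$; the case $\sigma_q=0$ is excluded because it would give $s_-<0<s_+$. Thus the ray meets the interior iff $|\rho_q|<r$ and $\sigma_q>0$. In that case the whole interval $[s_-,s_+]$ lies in $s>0$, which yields $d_q=s_-$ and $\ell_q=s_+-s_-=2\sqrt{r^2-\rho_q^2}$, matching \eqref{eq:def:d_q:ell_q}.

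The only delicate step is the sign argument in \ref{itm:chord_trans}, which excludes $y$ lying between the two intersection points. It rests entirely on $P(0)>0$ and on $s_-+s_+=2\sigma_q$. The remaining steps are direct computation. One should also fix the interpretation of ``transversely'' so that the tangent case $|\rho_q|=r$ is excluded even when $\sigma_q>0$.
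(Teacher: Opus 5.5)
Your proof is correct and follows the paper's argument essentially line by line. Both use the same quadratic $(s-\sigma_q)^2+\rho_q^2$ along $s\mapsto y-s\theta(\omega)$, and both use the hypothesis $|c_q-y|>r$ to place the roots $s_\pm$ on the side of $\sigma_q$; your $P(0)>0$ is exactly the paper's inequality $\sigma_q^2>r^2-\rho_q^2$. Delete the garbled first attempt at $z(s)-c_q$ and keep only the correct identity $y-c_q-s\theta=(\sigma_q-s)\theta-\rho_q\theta^\perp$.
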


\begin{proof}
Write $\rho_q=\rho_q(y,\omega)$ and
$\sigma_q=\sigma_q(y,\omega)$.
For $s\in\R$, the orthogonal decomposition gives
\[
    |y-s\theta(\omega)-c_q|^2
    =(s-\sigma_q)^2+\rho_q^2.
\]

\ref{itm:chord_condi}
The line $L(y,\theta(\omega))$ meets $\partial B_q$
if and only if $(s-\sigma_q)^2=r^2-\rho_q^2$
has a real solution.
This is equivalent to $|\rho_q|\le r$.

\ref{itm:chord_tangent}
When $|\rho_q|\le r$, the intersection parameters are
\[
    s_\pm=\sigma_q\pm\sqrt{r^2-\rho_q^2}.
\]
The parameters $s_-$ and $s_+$ coincide exactly when
$|\rho_q|=r$.
Hence, $L(y,\theta(\omega))$ is tangent to $\partial B_q$
if and only if $|\rho_q|=r$.

\ref{itm:chord_trans}
Suppose that $|\rho_q|<r$.
Since $|c_q-y|>r$, we obtain
\[
    \sigma_q^2
    =|c_q-y|^2-\rho_q^2
    >r^2-\rho_q^2>0.
\]
Thus, $s_-$ and $s_+$ both have the sign of $\sigma_q$.
The points $y-s\theta(\omega)$ inside $B_q$ correspond
to $s_-<s<s_+$.
Since the backward ray $\mcR(y,\omega)$ is parametrized
by $s\ge0$, the transverse crossing condition is
precisely \eqref{eq:backward_crossing_condition}.
Under this condition, the entry parameter is $d_q=s_-$.
The exit parameter is $s_+$.
Since $|\theta(\omega)|=1$, the chord length is
$\ell_q=s_+-s_-$.
Substituting the expressions for $s_\pm$ gives
\eqref{eq:def:d_q:ell_q}.
\end{proof}
The chord geometry is illustrated in Figure~\ref{fig:sigma_rho}.
\begin{remark}\label{rm:chord_analytic}
   Fix $q\in\{1,\dots,K\}$ and use the $2\pi$-periodic
extensions of $\rho_q$ and $\sigma_q$ in the angular
variable. The formulas \eqref{eq:def:d_q:ell_q} define
real-analytic functions $d_q$ and $\ell_q$ on the
open set\begin{equation}\label{eq:transverse_set}
    \bigl\{
        (y,\omega)\in\R^2\times\R
        \bigm|
        |c_q-y|>r,\ 
        |\rho_q(y,\omega)|<r
    \bigr\}.
\end{equation}
Indeed, $\rho_q$ and $\sigma_q$ are real-analytic. For every $(y,\omega)$ in \eqref{eq:transverse_set},
the orthogonal decomposition of $c_q-y$ gives
\[
    \sigma_q(y,\omega)^2
    =
    |c_q-y|^2-\rho_q(y,\omega)^2
    >
    r^2-\rho_q(y,\omega)^2
    >
    0.
\]
Hence, $\sigma_q(y,\omega)\neq0$ throughout
\eqref{eq:transverse_set}.
\end{remark}
For $y\in\R^2$ and $\omega\in[0,2\pi)$, we collect the indices of rods whose closed disks do not contain $y$ and whose interiors are crossed by $\mcR(y,\omega)$ in the set
\begin{equation}\label{eq:def:C_omega}
    \mcC(y,\omega)
    :=
    \bigl\{
        q\in\{1,\dots,K\}:
        |c_q-y|>r,\ 
        |\rho_q(y,\omega)|<r,\ 
        \sigma_q(y,\omega)>0
    \bigr\}.
\end{equation}
If $\mcC(y,\omega)$ is nonempty, write $\mcC(y,\omega)=\{q_1,\dots,q_N\}$
and order the indices according to the entry parameters along $\mcR(y,\omega)$, so that
\begin{equation}\label{eq:ordering}
    0<d_{q_1}(y,\omega)
    <
    \cdots
    <
    d_{q_N}(y,\omega).
\end{equation}
The inequalities are strict because the rods are pairwise disjoint.

If $y$ is a tangency point of $B_m$, then for every $q\neq m$, the reverse triangle inequality gives
\[
    |c_q-y|
    \ge
    |c_q-c_m|-r
    >
    r.
\]
Hence, Lemma~\ref{lm:d_q_ell_q} applies to every $q\neq m$.
The rod $B_m$ itself is treated separately when needed.

\subsection{Parameter space}\label{sec:parameter}


\begin{definition}\label{def:parameter space}
Fix a known moderator attenuation $a_0>0$. Let $\mcG$ be as in \eqref{eq:geometry}, and let $0<a_{\min}\le a_{\max}$ and
$0\le f_{\min}\le f_{\max}$. The \emph{parameter space}
$X=X(a_{\min},a_{\max},f_{\min},f_{\max},a_0, \mcG)$ consists of pairs $(a,f)\in L^\infty(\Omega)\times L^\infty(\Omega)$ of the form
\begin{equation}\label{eq:a_f_chi}
    a=a_0\chi_{\Omega_0}+\sum_{j=1}^{K}a_j\chi_{B_j},
    \qquad
    f=\sum_{j=1}^{K}f_j\chi_{B_j},
\end{equation}
with
\[
    a_{\min}\le a_j\le a_{\max},
    \qquad
    f_{\min}\le f_j\le f_{\max},
    \qquad j=1,\dots,K.
\]
In the whole-space formulas \eqref{eq:beam}--\eqref{eq:u_backward}, we extend both $a$ and $f$ by zero outside $\Omega$.
\end{definition}
Thus, the moderator has known attenuation $a_0$ and does not emit. Each rod carries one unknown attenuation value and one unknown source value.
\begin{remark}\label{rm:X_compact}
For fixed $\mcG$, the representation \eqref{eq:a_f_chi} is unique
up to sets of measure zero. The coefficient map $(a,f) \longmapsto  (a_1,\dots,a_K,f_1,\dots,f_K)$ therefore identifies $X$ with the compact box $ [a_{\min},a_{\max}]^K \times [f_{\min},f_{\max}]^K \subset\mathbb R^{2K}.$ In particular, $X$ is compact. We use $a,f$ for both the functions and their coefficient vectors. Since two elements of $X$ share the moderator value $a_0$, the coefficient and function norms satisfy
\[
    \|a-\tilde a\|_{\ell^\infty}
    =\|a-\tilde a\|_{L^\infty(\Omega)},
    \qquad
    \|f-\tilde f\|_{\ell^\infty}
    =\|f-\tilde f\|_{L^\infty(\Omega)}.
\]
 We equip $X$ with the metric $d_X\bigl((a,f),(\tilde a,\tilde f)\bigr):=\|a-\tilde a\|_{\ell^\infty}+\|f-\tilde f\|_{\ell^\infty}$ induced by $L^\infty(\Omega)\times L^\infty(\Omega)$. We note that $X$ is not a vector space.
\end{remark}

\subsection{The solution of the transport equation and its tangential trace}\label{ssec:utan}

Since $a$ and $f$ are piecewise constant, the integral in
\eqref{eq:u_backward} reduces to a finite sum over the chords
described in Subsection~\ref{ssec:chords}. Fix a rod $B_m$ and a tangency $\tau=(\psi,\omega_0)\in\mcT_m$, with $y_\tau$ and $\theta_\tau$ as in Notation~\ref{not:tangency_data}. Throughout this subsection, $y_\tau$ remains fixed while $\omega$ varies. For $(a,f)\in X$ and $q\in\mcC(y_\tau,\omega)$, we define
\begin{equation}\label{eq:def:beta_W}
    \beta_{q,m}(\omega):=\int_0^{d_q(y_\tau,\omega)}
            a\bigl(y_\tau-\xi\theta(\omega)\bigr)\,d\xi,
    \qquad
    W_{q,m}(\omega):=f_q\, \frac{1-e^{-a_q\ell_q(y_\tau,\omega)}}{a_q}.
\end{equation}
The quantity $\beta_{q,m}(\omega)$ is the attenuation accumulated
along the segment from $y_\tau$ to $y_\tau-d_q(y_\tau,\omega)\theta(\omega)$. The quantity $W_{q,m}(\omega)$ is the source contribution from the chord in $B_q$, including attenuation within $B_q$. We suppress the dependence of $\beta_{q,m}$ and $W_{q,m}$
on $(a,f)$ and the fixed tangency $\tau$. The integral defining $\beta_{q,m}$ is finite because $a$ is bounded and $d_q(y_\tau,\omega)$ is finite. The denominator in $W_{q,m}$ is positive because $a_q\ge a_{\min}>0$.

Since $y_\tau\in\partial B_m$, the backward ray $\mcR(y_\tau,\omega)$ crosses $B_m$ precisely when $\sigma_m(y_\tau,\omega)>0$. Under this condition, the chord $\mcR(y_\tau,\omega)\cap B_m$ has length $2\sigma_m(y_\tau,\omega)$. We set
\[
    \ell_m^+(\omega)
    :=
    2\max\{\sigma_m(y_\tau,\omega),0\}.
\]
\begin{lemma}\label{lm:u_tan_as_sum}
For every $(a,f)\in X$ and every $\omega\in[0,2\pi)$, the solution $u$ of \eqref{eq:transport}, given by \eqref{eq:u_backward}, satisfies
\begin{equation}\label{eq:u_tan_sum}
    u\bigl(y_\tau,\theta(\omega)\bigr)
    =
    f_m\,\frac{1-e^{-a_m\ell_m^+(\omega)}}{a_m}
    +
    \sum_{q\in\mcC(y_\tau,\omega)}
        W_{q,m}(\omega)e^{-\beta_{q,m}(\omega)}.
\end{equation}
\end{lemma}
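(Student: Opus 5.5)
We start from the backward-ray representation \eqref{eq:u_backward} with $x=y_\tau$, and split the half-line $s\ge0$ according to where the point $y_\tau-s\theta(\omega)$ lies. Since $f=\sum_j f_j\chi_{B_j}$ vanishes on $\Omega_0$ and outside $\Omega$, the integrand is nonzero only for $s$ in the set $\{s\ge0: y_\tau-s\theta(\omega)\in B_j\}$ for some $j$. For each $j$ this set is a chord, empty, or a single point, hence a null set in the tangential case.

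\textbf{Contribution of $B_m$.} Because $y_\tau\in\partial B_m$, we have $|c_m-y_\tau|^2=r^2$. The orthogonal decomposition then gives $|y_\tau-s\theta-c_m|^2=s^2-2s\sigma_m+r^2$. Hence the backward ray lies in $B_m$ exactly for $0<s<2\sigma_m$ when $\sigma_m>0$, and never otherwise (up to a null set). So the chord is $(0,\ell_m^+(\omega))$. On this interval the attenuation integrand equals $a_m$, so the exponent is $a_m s$. Integrating $f_m e^{-a_m s}$ over $(0,\ell_m^+)$ gives the first term, which is also correct (zero) when $\ell_m^+=0$.

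\textbf{Contribution of $B_q$, $q\ne m$.} We argued before the lemma that $|c_q-y_\tau|>r$, so Lemma~\ref{lm:d_q_ell_q} applies. If $|\rho_q|\ge r$, or if $|\rho_q|<r$ and $\sigma_q\le 0$, then the intersection of the ray with $B_q$ is a null set, by parts \ref{itm:chord_condi}--\ref{itm:chord_trans} and the sign argument in \ref{itm:chord_trans}. Otherwise $q\in\mcC(y_\tau,\omega)$ and the ray meets $B_q$ in $(d_q,d_q+\ell_q)$. For $s$ in this interval, split $\int_0^s a = \int_0^{d_q} a + a_q(s-d_q)$, the first part being $\beta_{q,m}(\omega)$. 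Substituting $s=d_q+\xi$ gives
\[
    \int_{d_q}^{d_q+\ell_q} f_q e^{-\beta_{q,m}-a_q(s-d_q)}\,ds = e^{-\beta_{q,m}}\,f_q\frac{1-e^{-a_q\ell_q}}{a_q}=W_{q,m}e^{-\beta_{q,m}}.
\]

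\textbf{Summing.} The disks are disjoint, so these intervals are disjoint, and summing over $j$ yields \eqref{eq:u_tan_sum}. All integrals are finite because the ray leaves the bounded $\Omega$.

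The only delicate point is the bookkeeping of degenerate cases: tangential contact with some $B_q$, the endpoint $\sigma_m=0$, and $\omega$ for which $y_\tau$ itself sits on $\partial B_m$ with the ray pointing outward. We handle all of these uniformly by noting that each such intersection set has measure zero, so it contributes nothing to the integral.
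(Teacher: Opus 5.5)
Your proof is correct and follows essentially the same route as the paper: you split the backward-ray integral \eqref{eq:u_backward} into the chord $(0,\ell_m^+(\omega))$ in $B_m$ and the chords $(d_q,d_q+\ell_q)$ for $q\in\mcC(y_\tau,\omega)$, then substitute $s=d_q+\xi$ to factor out $e^{-\beta_{q,m}(\omega)}$. Your explicit computation $|y_\tau-s\theta(\omega)-c_m|^2=s^2-2s\sigma_m+r^2$ and your case analysis for $q\notin\mcC(y_\tau,\omega)$ supply details that the paper asserts without proof.
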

\begin{proof}
    By \eqref{eq:u_backward}, we have
    \[
    u(y_\tau,\theta(\omega))
    =
    \int_0^\infty
        f\bigl(y_\tau-s\theta(\omega)\bigr)
        \exp\left(
            -\int_0^s
                a\bigl(y_\tau-\xi\theta(\omega)\bigr)\,d\xi
        \right)
    \,ds.
\]
The source vanishes outside the rods. Hence only the portions of $\mcR(y_\tau,\omega)$ inside the rods contribute to the integral. The contribution from $B_m$ is
\[
    \int_0^{\ell_m^+(\omega)}f_m e^{-a_ms}\,ds
    =
    f_m\,\frac{1-e^{-a_m\ell_m^+(\omega)}}{a_m}.
\]
This expression is zero when the backward ray does not
enter $B_m$. 

For $q\in\mcC(y_\tau,\omega)$, the contribution from $B_q$
is the integral over
\[
    [d_q(y_\tau,\omega),
      d_q(y_\tau,\omega)+\ell_q(y_\tau,\omega)].
\]
Set $s=d_q(y_\tau,\omega)+\kappa$. Since $a=a_q$ inside
$B_q$, this contribution equals
\begin{align*}
    &\int_{d_q(y_\tau,\omega)}^{
        d_q(y_\tau,\omega)+\ell_q(y_\tau,\omega)}
        f_q
        \exp\left(
            -\int_0^s
                a\bigl(y_\tau-\xi\theta(\omega)\bigr)\,d\xi
        \right)
    \,ds\\
    &\qquad=
    f_q e^{-\beta_{q,m}(\omega)}
    \int_0^{\ell_q(y_\tau,\omega)}
        e^{-a_q\kappa}\,d\kappa=
    W_{q,m}(\omega)e^{-\beta_{q,m}(\omega)}.
\end{align*}
The rod intervals are pairwise disjoint. Summing their
contributions proves \eqref{eq:u_tan_sum}.
\end{proof}
\begin{definition}[Tangential trace]\label{def:u_tan}
Let $(a,f)\in X$ and $\tau=(\psi,\omega_0)\in\mcT_m$. Let $u$ be the solution of \eqref{eq:transport} given by \eqref{eq:u_backward}. We define the \emph{tangential trace} of $u$ at $\tau$ by
\begin{equation}\label{eq:u_tan_limit}
    \utan[a,f](y_\tau,\theta_\tau):=u(y_\tau,\theta_\tau).
\end{equation}
\end{definition}
\begin{corollary}\label{cor:u_tan_value}
Let $\tau=(\psi,\omega_0)\in\mcS_m$ and $(a,f)\in X$.
Then there is an open interval $U\ni\omega_0$ with the following properties.
\begin{enumerate}[label=\textup{(\roman*)}]
\item\label{it:cor_const} For every $\omega\in U$, we have $\mcC(y_\tau,\omega) =\mcC(y_\tau,\omega_0)$. The ordering \eqref{eq:ordering} of the indices in $\mcC(y_\tau,\omega_0)$ is unchanged on $U$.
\item\label{it:cor_cont} For every $q\in\mcC(y_\tau,\omega_0)$, the functions $d_q(y_\tau,\cdot)$, $\ell_q(y_\tau,\cdot)$, $\beta_{q,m}$ and $W_{q,m}$ are continuous on $U$. The function $\ell_m^+$ is continuous on $U$. At the tangency, $\ell_m^+(\omega_0)=0$.
\item\label{it:cor_value}   The function $\omega\mapsto u(y_\tau,\theta(\omega))$ is continuous on $U$. The tangential trace is given by
    \begin{equation}\label{eq:u_tan_value}
        \utan[a,f](y_\tau,\theta_\tau)
        =
        \sum_{q\in\mcC(y_\tau,\omega_0)}
            W_{q,m}(\omega_0)e^{-\beta_{q,m}(\omega_0)}.
    \end{equation}
\end{enumerate}
\end{corollary}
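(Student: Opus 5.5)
The proof will be a continuity argument at the fixed point $y_\tau$, built on the chord formulas of Lemma~\ref{lm:d_q_ell_q}, the analyticity in Remark~\ref{rm:chord_analytic}, and the sum representation of Lemma~\ref{lm:u_tan_as_sum}. The plan is first to choose $U$ so that no rod $B_q$, $q\neq m$, changes its crossing status relative to $\mcR(y_\tau,\omega)$. Then continuity of each summand gives continuity of $u$, and evaluating at $\omega_0$ gives \eqref{eq:u_tan_value}.

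\emph{Step 1 (choice of $U$).} Fix $q\neq m$. We already know that $|c_q-y_\tau|>r$, so $\rho_q(y_\tau,\cdot)$ and $\sigma_q(y_\tau,\cdot)$ are continuous, and $\sigma_q\neq0$ wherever $|\rho_q|<r$. Since $y_\tau\in L(\psi,\omega_0)$, we have $\rho_q(y_\tau,\omega_0)=n_q(\psi,\omega_0)$. Simplicity of $\tau$ means $\tau\notin\mcT_q$. Moreover, if $|n_q(\psi,\omega_0)|=r$ held, the discussion after \eqref{eq:T_p_def} would force $t_q>0$ and hence $\tau\in\mcT_q$. Therefore $|\rho_q(y_\tau,\omega_0)|\neq r$, and there are two cases.

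If $|\rho_q(y_\tau,\omega_0)|>r$, this strict inequality persists near $\omega_0$, so $q\notin\mcC(y_\tau,\omega)$ there.

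If $|\rho_q(y_\tau,\omega_0)|<r$, then near $\omega_0$ both $|\rho_q|<r$ and $\sigma_q\neq 0$ hold. By continuity the sign of $\sigma_q$ is constant, so membership of $q$ in $\mcC(y_\tau,\omega)$ is constant.

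Intersecting these finitely many neighborhoods gives $\mcC(y_\tau,\omega)=\mcC(y_\tau,\omega_0)$ on $U$. For the ordering, the entry parameters $d_{q_i}$ are continuous by Remark~\ref{rm:chord_analytic} and strictly ordered at $\omega_0$ by \eqref{eq:ordering}. Shrinking $U$ preserves the strict inequalities, which proves \ref{it:cor_const}.

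\emph{Step 2 (continuity).} The functions $d_q(y_\tau,\cdot)$ and $\ell_q(y_\tau,\cdot)$ are real-analytic on $U$, so $W_{q,m}$ is continuous. The function $\ell_m^+=2\max\{\sigma_m(y_\tau,\cdot),0\}$ is continuous. Since $\theta_\tau\cdot(c_m-y_\tau)=0$, we get $\sigma_m(y_\tau,\omega_0)=0$ and hence $\ell_m^+(\omega_0)=0$.

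The only point needing care is $\beta_{q,m}$. This is the main, though mild, obstacle: $a$ is discontinuous, so continuity of $\beta_{q,m}$ cannot come from continuity of the integrand alone. I would handle it in one of two ways.

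The first is to split the segment $[0,d_q]$ into moderator and rod pieces. This gives $\beta_{q,m}$ as $a_0$ times the moderator length, plus $a_m\ell_m^+$, plus $\sum a_{q'}\ell_{q'}$ over those $q'\in\mcC$ preceding $q$ in the ordering. Each of these lengths is continuous by Step 1.

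Alternatively, one can apply dominated convergence. The set of points $\xi$ for which $y_\tau-\xi\theta(\omega_0)$ lies on some $\partial B_j$ is finite (it includes the tangency point $\xi=0$), so the integrand converges almost everywhere. It is bounded by $a_{\max}+a_0$, and the upper limit $d_q$ is continuous.

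Either way $\beta_{q,m}$ is continuous on $U$, which proves \ref{it:cor_cont}.

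\emph{Step 3 (the trace formula).} Combining \eqref{eq:u_tan_sum} with Steps 1 and 2, $u(y_\tau,\theta(\cdot))$ is a finite sum of continuous functions over a fixed index set on $U$, so it is continuous. At $\omega_0$ the $B_m$ term vanishes because $\ell_m^+(\omega_0)=0$. Definition~\ref{def:u_tan} then yields \eqref{eq:u_tan_value}, proving \ref{it:cor_value}.
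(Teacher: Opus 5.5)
Your proposal is correct and follows the paper's proof essentially step for step. It uses the same case split on whether $|\rho_q(y_\tau,\omega_0)|$ is less than or greater than $r$, the same persistence of the sign of $\sigma_q$, continuity of the entry parameters to keep the ordering, and the moderator/rod splitting of $\beta_{q,m}$. Your first option is exactly the paper's formula $\beta_{q_k,m}=a_0d_{q_k}+(a_m-a_0)\ell_m^{+}+\sum_{j<k}(a_{q_j}-a_0)\ell_{q_j}$. It needs one step you left implicit: the segment lies in $\Omega$ by convexity, so its non-rod part carries $a_0$ rather than $0$. Your dominated-convergence alternative is also valid but not needed.
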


\begin{proof}
\ref{it:cor_const} Fix $q\ne m$. Since $\tau$ is simple, we have
$|\rho_q(y_\tau,\omega_0)|=|n_q(\psi,\omega_0)|\ne r$. By continuity, the sign of $\rho_q(y_\tau,\omega)^2-r^2$ is constant near $\omega_0$.
If $|\rho_q(y_\tau,\omega_0)|<r$, then $\sigma_q(y_\tau,\omega_0)\ne0$ by Remark~\ref{rm:chord_analytic}. The sign of $\sigma_q(y_\tau,\omega)$ is then also constant on a sufficiently small neighborhood of $\omega_0$.
The crossing criterion \eqref{eq:backward_crossing_condition}
therefore shows that membership of $q$ in $\mcC(y_\tau,\omega)$ is unchanged on this neighborhood. The index $m$ is excluded because $|c_m-y_\tau|=r$.

Since there are finitely many rods, we can choose a common open interval $U\ni\omega_0$ on which $\mcC(y_\tau,\omega)=\mcC(y_\tau,\omega_0)$.
The entry parameters $d_q(y_\tau,\cdot)$, $q\in\mcC(y_\tau,\omega_0)$, are continuous on $U$ by Remark~\ref{rm:chord_analytic}. After shrinking $U$ if necessary, the strict inequalities in \eqref{eq:ordering} remain valid.

\ref{it:cor_cont} Remark~\ref{rm:chord_analytic} also gives continuity of
$\ell_q(y_\tau,\cdot)$ for $q\in\mcC(y_\tau,\omega_0)$. The formula defining $W_{q,m}$ consequently shows that $W_{q,m}$ is continuous on $U$.
The identity $\ell_m^+(\omega)=2\max\{\sigma_m(y_\tau,\omega),0\}$
gives the same conclusion for $\ell_m^+$. Since $\sigma_m(y_\tau,\omega_0)=0$ at the tangency, we obtain $\ell_m^+(\omega_0)=0$.

If $\mcC(y_\tau,\omega_0)$ is nonempty, let $q_1,\dots,q_N$
be ordered as in \eqref{eq:ordering}. For $\omega\in U$ and $1\le k\le N$, the segment from
$y_\tau$ to $y_\tau-d_{q_k}(y_\tau,\omega)\theta(\omega)$ lies in $\Omega$ by convexity. The intersection of this segment with $B_m$ has length $\ell_m^+(\omega)$. For each $j<k$, the segment contains the full chord through $B_{q_j}$, of length $\ell_{q_j}(y_\tau,\omega)$. The segment meets no other rod interior. 

Splitting the attenuation integral into moderator and rod contributions, we obtain
\[
    \beta_{q_k,m}(\omega)
    =
    a_0d_{q_k}(y_\tau,\omega)
    +(a_m-a_0)\ell_m^+(\omega)
    +\sum_{j<k}(a_{q_j}-a_0)\ell_{q_j}(y_\tau,\omega), \qquad \omega\in U.
\]
The continuity of the terms on the right-hand side gives continuity of $\beta_{q_k,m}$ on $U$.

\ref{it:cor_value} By parts~\ref{it:cor_const} and \ref{it:cor_cont}, every term in \eqref{eq:u_tan_sum} is continuous on $U$.
Hence $\omega\mapsto u(y_\tau,\theta(\omega))$ is continuous on $U$.
Evaluating \eqref{eq:u_tan_sum} at $\omega=\omega_0$, where $\ell_m^+(\omega_0)=0$, gives \eqref{eq:u_tan_value}.
\end{proof}
\begin{remark}\label{rm:apriori_bound_utan}
For every $(a,f)\in X$, $y\in\R^2$ and $\theta\in\mathbb S^1$, the representation \eqref{eq:u_backward} gives
\begin{equation}\label{eq:u_tan_uniform_bound}
    0\le u(y,\theta)\le\frac{f_{\max}}{a_{\min}}.
\end{equation}
To see this, put $A(s)=\int_0^s a(y-\sigma\theta)\,d\sigma$. Then $A(0)=0$ and $A'(s)=a(y-s\theta)$ almost everywhere.
For every $T>0$, integration gives
\[
    \int_0^T A'(s)e^{-A(s)}\,ds
    =1-e^{-A(T)}
    \le1.
\]
Since the source is supported in the rods,
$f(y-s\theta)\le(f_{\max}/a_{\min})a(y-s\theta)$ almost everywhere. Consequently, we obtain
\[
    u(y,\theta)
    \le\frac{f_{\max}}{a_{\min}}
       \int_0^\infty A'(s)e^{-A(s)}\,ds
    \le\frac{f_{\max}}{a_{\min}}.
\]
The same bound holds for the tangential trace defined in Definition~\ref{def:u_tan}.
\end{remark}

\subsection{Data space}\label{sec:data}

\begin{definition}\label{def:data}
The data associated with $(a,f)\in X$ are
\[
    R_af(\psi,\omega):=R_af(x(\psi),\theta(\omega)),
    \qquad(\psi,\omega)\in\mcM,
\]
where $\mcM\subset[0,2\pi)^2$ is a measurement parameter set.
\end{definition}
For every selected simple tangency $\tau=(\psi,\omega_0)$ used in the stability estimate, we require
\begin{equation}\label{eq:measurement_windows}
    \{\psi\}\times(\omega_0-\delta_\tau,\omega_0+\delta_\tau)
    \subset\mcM.
\end{equation}
Angular intervals in \eqref{eq:measurement_windows} are understood modulo $2\pi$. The estimate therefore uses only finitely many boundary positions, with data on an angular interval at each selected position. For details, see \ref{ass:stab:data} in Assumption~\ref{ass:stability}.

\subsubsection{The singular functionals}\label{ssec:functionals}
Fix a simple tangency $\tau = (\psi,\omega_0) \in \mcS_m$, and define the punctured interval
\[
    I_\tau^\circ:=(\omega_0-\delta_\tau,\omega_0+\delta_\tau)
                   \setminus\{\omega_0\}.
\]
The boundary point $x_\tau$ is fixed while the direction $\omega$ varies. The notation $\omega\to\omega_0^{\epsilon_\tau}$ denotes approach from the intersecting side, where $\epsilon_\tau(\omega-\omega_0)>0$. The reflected angle $2\omega_0-\omega$ lies on the side where
$L(\psi,2\omega_0-\omega)$ misses $B_m$. The line $L(\psi,2\omega_0-\omega)$ may still intersect other rods.
Thus, the data at the reflected angle need not vanish.

\begin{definition}\label{def:functionals}
Let $\mcV_\tau$ be the space of real-valued functions $H\in C^2(I_\tau^\circ)$ for which the limits
\begin{align}
    \Phi_D[H]
    &:=
    \epsilon_\tau\lim_{\omega\to\omega_0^{\epsilon_\tau}}
       |\omega-\omega_0|^{1/2}H'(\omega),
       \label{eq:PhiD-def}\\
    \Phi_N[H]
    &:=
    -\lim_{\omega\to\omega_0^{\epsilon_\tau}}
    \left[
       |\omega-\omega_0|H''(\omega)
       +\frac{\epsilon_\tau}{2}
        \bigl(H'(\omega)-H'(2\omega_0-\omega)\bigr)
    \right]
    \label{eq:PhiN-def}
\end{align}
exist and are finite.
We define
\begin{equation}\label{eq:seminorm}
    \|H\|_\tau:=|\Phi_D[H]|+|\Phi_N[H]|
\end{equation}
for any $H \in \mcV_\tau$.
\end{definition}
Restricting $H$ to a smaller angular window leaves $\Phi_D[H]$, $\Phi_N[H]$ and $\|H\|_\tau$ unchanged. The expression $H'(2\omega_0-\omega)$ denotes $H'$ evaluated at the reflected angle $2\omega_0-\omega$. We suppress the dependence of $\Phi_D$ and $\Phi_N$ on the fixed tangency $\tau=(\psi,\omega_0)$. For the data functions in Proposition~\ref{prop:HR} with $\Phi_D[H]\ne0$, the sign conventions in \eqref{eq:PhiD-def} and \eqref{eq:PhiN-def} make the quotient in \eqref{eq:Delta_a:ND} equal to $a_m-a_0$. Section~\ref{sec:numerics} examines the numerical approximation of $\Phi_D[H]$ and $\Phi_N[H]$ in simple geometries.
\begin{remark}\label{rm:seminorm}
The space $\mcV_\tau$ is linear. The functionals $\Phi_D$ and $\Phi_N$ are linear on $\mcV_\tau$. Hence $\|\cdot\|_\tau$ is a seminorm. By \eqref{eq:seminorm}, we have
\[
    |\Phi_D[H]|\le\|H\|_\tau,
    \qquad
    |\Phi_N[H]|\le\|H\|_\tau.
\]
If $H$ extends to a $C^2$ function on a neighborhood of $\omega_0$, then $\Phi_D[H]=\Phi_N[H]=0$. Indeed, $H'$ and $H''$ are bounded near $\omega_0$, so $|\omega-\omega_0|^{1/2}H'(\omega)$ and $|\omega-\omega_0|H''(\omega)$ tend to zero. Continuity of $H'$ also gives $H'(\omega)-H'(2\omega_0-\omega)\to0$. Thus, $\|\cdot\|_\tau$ has a nontrivial kernel and is not a norm.
\end{remark}
For $(a,f)\in X$ and  $\tau=(\psi,\omega_0)\in\mcS_m$, we define
\begin{equation}\label{eq:F_m_def}
    F_m^{(a,f)}(\tau) :=
    f_m-(a_m-a_0)\,
    \utan[a,f](y_\tau,\theta_\tau).
\end{equation}
We call $F_m^{(a,f)}(\tau)$ the \emph{non-cancellation coefficient} of $(a,f)$ at $\tau$.

The following proposition specializes the local singularity calculations of Holman and Richardson \cite{Holman:Richardson:2023} to the circular interfaces of our model. Fix a simple tangency $\tau=(\psi,\omega_0)\in\mcS_m$. The circle $\partial B_m$ has curvature $1/r$. As $\omega$ varies, the line $L(x_\tau,\theta(\omega))$ rotates about $x_\tau$. The rotation center satisfies $|x_\tau-y_\tau|=L_\tau$. With the moderator value subtracted from the rod value, the source jump at $y_\tau$ is $f_m$. The attenuation jump at $y_\tau$ is $a_m-a_0$. The value $u(y_\tau,\theta_\tau)$ is the tangential trace defined in Definition~\ref{def:u_tan}.

The change of angular coordinate $\eta=\epsilon_\tau(\omega-\omega_0)$ places the intersecting side at $\eta>0$. Applying the local identities in this coordinate gives the signed
functionals in Definition~\ref{def:functionals}. The resulting formulas are stated below.
\begin{proposition}\label{prop:HR}
Let $(a,f)\in X$ and $\tau=(\psi,\omega_0)\in\mcS_m$, and set
$H(\omega):=R_af(x_\tau,\theta(\omega))$  for $0<|\omega-\omega_0|<\delta_\tau$. Then $H\in \mcV_\tau$. The functionals $\Phi_D$ and $\Phi_N$ satisfy
\begin{align}
    \Phi_D[H]&=\sqrt{2rL_\tau}\,F_m^{(a,f)}(\tau)\,e^{-Da(y_\tau,\theta_\tau)}, \label{eq:PhiD_H}\\
    \Phi_N[H]&=(a_m-a_0)\sqrt{2rL_\tau}\,\Phi_D[H]. \label{eq:PhiN_H}
\end{align}
On the nonintersecting side, we have
\[
    \lim_{\omega\to\omega_0^{-\epsilon_\tau}}
    |\omega-\omega_0|^{1/2}H'(\omega)=0.
\]
In particular, if $\Phi_D[H]\neq0$, then we obtain
\begin{equation}\label{eq:Delta_a:ND}
    a_m-a_0=\frac{\Phi_N[H]}{\sqrt{2rL_\tau}\,\Phi_D[H]}.
\end{equation}
\end{proposition}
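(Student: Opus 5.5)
The plan is a direct local computation of $H(\omega)=R_af(x_\tau,\theta(\omega))$ near $\omega_0$, rather than a citation of the general theorem of \cite{Holman:Richardson:2023}. The computation is explicit because the interfaces are circles and the coefficients are piecewise constant. With $\eta=\epsilon_\tau(\omega-\omega_0)$, I will split $H$ as
\[
    H(\omega)=H_{\mathrm{reg}}(\omega)+\chi_{\{\eta>0\}}S(\omega).
\]
Here $H_{\mathrm{reg}}$ is real-analytic across $\omega_0$. It collects the contributions of the rods $B_q$, $q\ne m$, and the moderator attenuation. This regularity holds because $\tau$ is simple, and the argument is that of Corollary~\ref{cor:u_tan_value} applied along the whole line $L(\psi,\omega)$, both forward and backward of $y_\tau$, together with Remark~\ref{rm:chord_analytic}. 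The term $S$ is the correction caused by the chord through $B_m$, which exists only on the singular side by Lemma~\ref{lm:side}. By Remark~\ref{rm:seminorm}, $H_{\mathrm{reg}}$ contributes nothing to $\Phi_D$ or $\Phi_N$.

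To write $S$ explicitly, I parametrize the line and let $\ell(\omega)=2\sqrt{r^2-n_m(\psi,\omega)^2}$ be the chord length in $B_m$. Writing the data as forward part plus $e^{-Da}u$ and splitting at the chord, the presence of $B_m$ changes the data in two ways. It adds the emission $f_m(1-e^{-a_m\ell})/a_m$, attenuated by the forward beam. It also multiplies the contribution $u$ coming from behind the chord by $e^{-(a_m-a_0)\ell}$. Thus
\[
    S(\omega)=E(\omega)\Bigl[f_m\,\tfrac{1-e^{-a_m\ell}}{a_m}-\tfrac{f_m}{a_0}\bigl(\ldots\bigr)+u_{\mathrm{back}}(\omega)\bigl(e^{-(a_m-a_0)\ell}-1\bigr)\Bigr].
\]
Here $E$ and $u_{\mathrm{back}}$ are analytic in $\omega$ and the chord endpoints, with $E(\omega_0)=e^{-Da(y_\tau,\theta_\tau)}$ and $u_{\mathrm{back}}(\omega_0)=\utan$. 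The precise bookkeeping of the moderator segment replaced by the chord must be done carefully, using that the moderator has no source.

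Next I expand in $\eta$. Since $n_m=\epsilon_\tau r-\epsilon_\tau L_\tau\eta+O(\eta^2)$ by \eqref{eq:dGp}, the chord length is
\[
    \ell=2\sqrt{2rL_\tau\eta}\,\bigl(1+O(\eta)\bigr),
\]
which is a power series in $\eta^{1/2}$ with odd powers only in the leading structure. Expanding the exponentials gives
\[
    S=e^{-Da}\Bigl[\bigl(f_m-(a_m-a_0)\utan\bigr)\ell-\tfrac12\bigl(a_m f_m-(a_m-a_0)^2\utan\bigr)\ell^2+\dots\Bigr]+\ldots
\]
So $S=c_1\eta^{1/2}+c_2\eta+c_3\eta^{3/2}+\dots$ with
\[
    c_1=2\sqrt{2rL_\tau}\,F_m^{(a,f)}(\tau)\,e^{-Da(y_\tau,\theta_\tau)}.
\]
The $\eta$-derivatives of the endpoint-dependent factors $E$ and $u_{\mathrm{back}}$ also enter $c_2$. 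Differentiating then gives $H'\sim \tfrac12 c_1\eta^{-1/2}\epsilon_\tau$, which yields \eqref{eq:PhiD_H}, and shows that the limit on the nonintersecting side is zero. For $\Phi_N$, the quantity $|\omega-\omega_0|H''$ cancels the $\eta^{-1/2}$ terms up to order $\eta^{1/2}$. Subtracting $H'$ at the reflected angle kills $H_{\mathrm{reg}}'$ at order zero, so $\Phi_N$ picks out the constant term of $S'$, namely a multiple of $c_2$.

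The main obstacle is showing that $c_2$ equals $(a_m-a_0)\cdot 2rL_\tau\cdot F_m e^{-Da}$. That is, the $\ell^2$ coefficient must combine with the first-order variation of $E$ and $u_{\mathrm{back}}$ so that the $\eta$ term is exactly $-(a_m-a_0)c_1^2/(4\,F_m e^{-Da})$-type, which gives \eqref{eq:PhiN_H}. I would first try to verify this with a cleaner exact formula. On the chord, the transport equation with constant coefficients gives $u_{\mathrm{exit}}=u_{\mathrm{entry}}e^{-a_m\ell}+f_m(1-e^{-a_m\ell})/a_m$. Comparing with the moderator value $u_{\mathrm{entry}}e^{-a_0\ell}$ isolates
\[
    S=e^{-Da(\mathrm{exit})}\bigl[F\ell-\tfrac12(a_m+a_0)F\ell^2+O(\ell^3)\bigr]
\]
with $F$ evaluated at the entry point. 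The shift from entry and exit points to $y_\tau$ is $O(\ell)$ in arclength, and it contributes $\pm\tfrac12 a_0\ell$ corrections that turn $(a_m+a_0)$ into the asserted $a_m-a_0$ after the symmetric centering. Finally, \eqref{eq:Delta_a:ND} follows by dividing \eqref{eq:PhiN_H} by $\sqrt{2rL_\tau}\,\Phi_D[H]$. Membership $H\in\mcV_\tau$ follows from the expansion, since $H\in C^2(I_\tau^\circ)$ by analyticity away from tangencies, using Lemma~\ref{lm:window}.
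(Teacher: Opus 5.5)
Your route differs from the paper's, which does no explicit computation for this proposition. The paper specializes the local identities of Holman--Richardson \cite{Holman:Richardson:2023} to circular interfaces (curvature $1/r$, rotation about $x_\tau$ at distance $L_\tau$, jumps $f_m$ and $a_m-a_0$, trace $u(y_\tau,\theta_\tau)$) in the coordinate $\eta=\epsilon_\tau(\omega-\omega_0)$. A direct computation is a legitimate self-contained alternative, and your skeleton is sound. Take $H_{\mathrm{reg}}$ to be the data of the configuration in which $(a_m,f_m)$ is replaced by $(a_0,0)$. Then $S$ is a convergent series $c_1\eta^{1/2}+c_2\eta+\cdots$, and $\Phi_D[H]=\tfrac12c_1$, $\Phi_N[H]=-\tfrac12c_2$. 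Your $c_1$ is correct, so \eqref{eq:PhiD_H} and the nonintersecting-side limit follow.

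The gap is $c_2$, on which \eqref{eq:PhiN_H} and hence the attenuation formula rest. You do not compute it, and both of your sketches give the wrong value.
\begin{itemize}
\item Your displayed $\ell^2$-coefficient $-\tfrac12\bigl(a_mf_m-(a_m-a_0)^2\utan\bigr)$ differs from the correct $-\tfrac12(a_m-a_0)\bigl(f_m-(a_m-a_0)\utan\bigr)$ by $-\tfrac12a_0f_m$. The $\eta$-derivatives of $E$ and $u_{\mathrm{back}}$ cannot repair this, because $G(\omega)\ell=G(\omega_0)\ell+O(\eta^{3/2})$ for any $G$ analytic in $\omega$.
\item Your entry/exit formula also drops a term. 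With $F_-:=f_m-(a_m-a_0)u_{\mathrm{entry}}$, the exact expansion is
\[
u_{\mathrm{exit}}-u_{\mathrm{entry}}e^{-a_0\ell}
=F_-\ell-\tfrac12\bigl((a_m+a_0)F_--a_0f_m\bigr)\ell^2+O(\ell^3).
\]
Without the $\tfrac12a_0f_m\ell^2$ term, recentering the exit factor by $e^{a_0\ell/2}$ yields the coefficient $-\tfrac12a_mf_m$ at an admissible tangency, where $\utan=0$. The quotient in \eqref{eq:Delta_a:ND} would then return $a_m$ instead of $a_m-a_0$.
\item The term $-\tfrac{f_m}{a_0}(\ldots)$ should not appear at all, since the moderator emits nothing.
\end{itemize}

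The missing idea is to center everything at the chord midpoint $z(\omega):=x_\tau+t_m(\psi,\omega)\theta(\omega)$. Let $E(\omega)$ and $U(\omega)$ be the forward attenuation factor $e^{-Da}$ and the flux $u$ at $z(\omega)$ in direction $\theta(\omega)$, both for the configuration with $B_m$ replaced by moderator. These are real-analytic near $\omega_0$, with $E(\omega_0)=e^{-Da(y_\tau,\theta_\tau)}$ and $U(\omega_0)=\utan[a,f](y_\tau,\theta_\tau)$. Only source-free moderator separates $z(\omega)$ from the chord endpoints, so $e^{-Da(\mathrm{exit})}=Ee^{a_0\ell/2}$ and $u_{\mathrm{entry}}=Ue^{a_0\ell/2}$. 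This gives the exact identity
\[
S=E\Bigl[f_m e^{a_0\ell/2}\,\frac{1-e^{-a_m\ell}}{a_m}+U\bigl(e^{-(a_m-a_0)\ell}-1\bigr)\Bigr]
=EF\Bigl[\ell-\frac{a_m-a_0}{2}\,\ell^2\Bigr]+O(\ell^3),
\qquad F:=f_m-(a_m-a_0)U.
\]
Thus the $O(\ell)$ offsets of the chord endpoints from the midpoint, not $\eta$-derivatives, produce the $\eta$-term. Since $\ell^2=8rL_\tau\eta+O(\eta^2)$, we get $c_2=-4(a_m-a_0)rL_\tau E(\omega_0)F(\omega_0)$. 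Hence $\Phi_N[H]=-\tfrac12c_2=(a_m-a_0)\sqrt{2rL_\tau}\,\Phi_D[H]$, which is \eqref{eq:PhiN_H}.
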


\section{Uniqueness and reconstruction procedure}\label{sec:uniqueness}
By Proposition~\ref{prop:HR}, attenuation recovery reduces to finding, for each rod $B_p$, a simple tangency $\tau\in\mcS_p$ at which $F_p^{(a,f)}(\tau)$ is nonzero. For the corresponding data function $H(\omega)=R_af(x_\tau,\theta(\omega))$, we then have
$\Phi_D[H]\neq0$, and \eqref{eq:Delta_a:ND} gives $a_p$.
If $F_p^{(a,f)}(\tau)=0$, we call $\tau$ a \emph{cancellation point}. In this case, $\Phi_D[H]=\Phi_N[H]=0$ by
\eqref{eq:PhiD_H}--\eqref{eq:PhiN_H}, and the quotient
\eqref{eq:Delta_a:ND} is undefined.

We give a sufficient condition on the geometry and coefficients
for non-cancellation on a nonempty relatively open family of simple
tangencies of a rod. If this condition holds for every rod, the
quotient \eqref{eq:Delta_a:ND} determines all attenuation
coefficients from the full data.

\subsection{Non-cancellation along a rod boundary}
\label{ssec:noncancellation}

\begin{definition}[Non-cancellation condition]
\label{def:noncancellation}
Let $(a,f)\in X$ and $\tau=(\psi,\omega_0)\in\mcS_p$. We say that $(a,f)$ satisfies the \emph{non-cancellation condition} at $\tau$ if
\begin{equation}\label{eq:noncancellation:condtion}
    F_p^{(a,f)}(\tau)
    =
    f_p-(a_p-a_0)\,
    \utan[a,f](y_\tau,\theta_\tau)
    \neq0.
\end{equation}
\end{definition}
In \cite[(11)]{Holman:Richardson:2023}, the non-cancellation condition is stated in terms of $u(x^*,\theta^*)$ at the tangency point in the tangent direction. Equation \eqref{eq:noncancellation:condtion} expresses that condition in our jump convention.

Fix a rod $B_p$. For $\phi\in \mathbb R/2\pi\mathbb Z$, consider the directed line tangent to $\partial B_p$ at $c_p+r\theta(\phi)$ in direction $\theta^\perp(\phi)$. Let $x(\psi(\phi))$ be the point where the backward ray $\mcR(c_p+r\theta(\phi),\phi+\pi/2)$ leaves $\Omega$. This defines the configuration
\begin{equation}\label{eq:tangent:para}
 \tau(\phi):=\bigl(\psi(\phi),\phi+\tfrac{\pi}{2}\bigr)\in\mcT_p,\qquad
y_{\tau(\phi)}=c_p+r\theta(\phi),\quad \theta_{\tau(\phi)}=\theta^\perp(\phi),
\end{equation}
and all quantities attached to $\tau(\phi)$ are functions of $\phi$. Since $c_p-y_{\tau(\phi)}=-r\theta(\phi)=r\theta^\perp(\phi+\pi/2)$, Lemma~\ref{lm:side}\ref{it:side_geom} gives $\epsilon_{\tau(\phi)}=+1$. If $L(\tau(\phi))$ is tangent to more than one rod, $y_{\tau(\phi)}$ denotes the tangency point on $\partial B_p$.

The map $\phi\mapsto\tau(\phi)$ is a continuous bijection from $\mathbb R/2\pi\mathbb Z$ onto the branch
$\{\tau\in\mcT_p:\epsilon_\tau=+1\}$. The inverse is the continuous map $(\psi,\omega_0)\mapsto\omega_0-\pi/2$. Indeed, if $\epsilon_\tau=+1$, then $y_\tau=c_p+r\theta(\phi)$ for a unique $\phi$ modulo $2\pi$.
Lemma~\ref{lm:side}\ref{it:side_geom} then gives $\omega_0=\phi+\pi/2$ modulo $2\pi$.
The point $x_\tau$ is the exit point of the backward ray from $y_\tau$. The branch is relatively open in $\mcT_p$ because $\epsilon_\tau$ is locally constant. Hence, the image of a nonempty open interval under $\phi\mapsto\tau(\phi)$ is a nonempty relatively open subset of $\mcT_p$.

The next assumption uses a line tangent to both $B_p$ and a second
emitting rod. On one side of the common tangent, nearby tangent
lines to $B_p$ cut the second rod in a short chord. We use the
resulting variation of the tangential trace to establish
non-cancellation at nearby simple tangencies.

\begin{assumption}\label{ass:essential_noncancel}
For the rod $B_p$, assume the following.
\begin{enumerate}[label=\textup{(\roman*)}]
   \item\label{ass:ess_non:A1}
    The attenuation jump is nonzero, so that
    \[
        a_p-a_0\neq0.
    \]
    \item\label{ass:ess_non:A2}
  There exist an index $m\in\{1,\dots,K\}\setminus\{p\}$
and an angle $\phi_*\in\mathbb R/2\pi\mathbb Z$
with the following properties.
For the fixed configuration
$\tau_*:=\tau(\phi_*)=(\psi_*,\omega_*)$
defined by \eqref{eq:tangent:para}, we have
    \[
        |\rho_m(y_{\tau_*},\omega_*)|=r,\qquad
            \sigma_m(y_{\tau_*},\omega_*)>0,\qquad f_m\ne0.
\]
For every $q\notin\{p,m\}$, we have
    \[
        |\rho_q(y_{\tau_*},\omega_*)|\ne r.
    \]
    Thus, $L(\psi_*,\omega_*)$ is tangent to exactly $\partial B_p$ and $\partial B_m$.
    The tangency point on $\partial B_m$ lies on $\mcR(y_{\tau_*},\omega_*)$ at a positive distance from $y_{\tau_*}$.
    \item\label{ass:ess_non:A3}
    Every rod crossed by $\mcR(y_{\tau_*},\omega_*)$ is encountered before the tangency point on $\partial B_m$. More precisely, for every $q\in\mcC(y_{\tau_*},\omega_*)$, we have
    \begin{equation}\label{eq:last_obstacle}
        d_q(y_{\tau_*},\omega_*)
        +
        \ell_q(y_{\tau_*},\omega_*)
        <
        \sigma_m(y_{\tau_*},\omega_*).
    \end{equation}
\end{enumerate}
\end{assumption}
\begin{lemma}\label{lm:noncancellation}
Let $(a,f)\in X$, and suppose that
Assumption~\ref{ass:essential_noncancel} holds for $B_p$.
Then there is a nonempty relatively open subset of $\mcT_p$ consisting of simple tangencies at which the non-cancellation condition \eqref{eq:noncancellation:condtion} holds.
\end{lemma}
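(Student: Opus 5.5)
The plan is to parametrize the branch of $\mcT_p$ near $\tau_*$ by $\phi\mapsto\tau(\phi)$ from \eqref{eq:tangent:para}. We then study the function
\[
    g(\phi):=F_p^{(a,f)}(\tau(\phi))=f_p-(a_p-a_0)\,\utan[a,f](y_{\tau(\phi)},\theta^\perp(\phi))
\]
on a small punctured interval around $\phi_*$. We will show that $g$ is not constant on the side of $\phi_*$ where the tangent line to $B_p$ cuts $B_m$ in a short chord. Since it is also continuous there, it cannot vanish on a whole open subinterval, and we extract an open interval where $g\ne0$.

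First, by Lemma~\ref{lm:discrete_tangency} and Lemma~\ref{lm:window}, there is $\eta>0$ such that $\tau(\phi)$ is simple for $0<|\phi-\phi_*|<\eta$. The hypothesis $|\rho_q(y_{\tau_*},\omega_*)|\ne r$ for $q\notin\{p,m\}$ persists by continuity, and the double tangency is isolated. Next we fix the one-sided geometry with respect to $B_m$. Write $\rho_m(\phi):=\rho_m(y_{\tau(\phi)},\phi+\pi/2)$. A direct derivative computation should show $\partial_\phi\rho_m(\phi_*)\ne0$, because $\sigma_m(y_{\tau_*},\omega_*)>0$: rotating the tangent line to $B_p$ moves its distance to $c_m$ at rate comparable to $r\pm\sigma_m$, and the sign must be checked. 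Hence on one side, say $\phi\in(\phi_*,\phi_*+\eta)$ after relabeling, we have $|\rho_m|<r$ and $m\in\mcC(y_{\tau(\phi)},\phi+\pi/2)$. On the other side $m$ is not crossed. The remaining rods in $\mcC$ do not change on the whole interval, by the argument of Corollary~\ref{cor:u_tan_value}\ref{it:cor_const}.

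On the cutting side we use Corollary~\ref{cor:u_tan_value}\ref{it:cor_value}. The trace is the sum of the fixed-set terms $W_{q,p}e^{-\beta_{q,p}}$, which are real-analytic in $\phi$ across $\phi_*$ by Remark~\ref{rm:chord_analytic}, plus the new term $W_{m,p}(\phi)e^{-\beta_{m,p}(\phi)}$. By Assumption~\ref{ass:essential_noncancel}\ref{ass:ess_non:A3}, $B_m$ is the last rod on the backward ray. Therefore $\beta_{m,p}$ is analytic near $\phi_*$ and does not involve $\ell_m$, while
\[
    \ell_m=2\sqrt{r^2-\rho_m^2}\sim c\,(\phi-\phi_*)^{1/2}
\]
with $c>0$. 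Thus, with $A$ analytic near $\phi_*$,
\[
    g(\phi)=A(\phi)-(a_p-a_0)\,f_m\,e^{-\beta_{m,p}(\phi_*)}\,c\,(\phi-\phi_*)^{1/2}\bigl(1+o(1)\bigr)
\]
for $\phi\downarrow\phi_*$. Because $a_p\ne a_0$ and $f_m\ne0$, the square-root term has a nonzero coefficient.

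If $g$ vanished identically on $(\phi_*,\phi_*+\eta')$ for some $\eta'>0$, this expansion would be contradicted: an analytic function cannot cancel a nonzero multiple of $(\phi-\phi_*)^{1/2}$. This is seen by dividing by $(\phi-\phi_*)^{1/2}$ after subtracting $A(\phi_*)$. Hence there is $\phi_1$ in that interval with $g(\phi_1)\ne0$. By continuity of $g$ on the simple range, which holds because $\mcC$ is locally constant and $d_q,\ell_q,\beta$ are continuous, $g\ne0$ on an open interval around $\phi_1$. Its image under $\phi\mapsto\tau(\phi)$ is a nonempty relatively open subset of $\mcT_p$ consisting of simple tangencies, as observed after \eqref{eq:tangent:para}.

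The main obstacle is the transversality $\partial_\phi\rho_m(\phi_*)\ne0$, together with the square-root asymptotics of $\ell_m$. One must also check that the analytic part $A$ really stays analytic through $\phi_*$, which is where \ref{ass:ess_non:A3} excludes interference with other chords. I would compute $\rho_m(\phi)=(c_m-c_p-r\theta(\phi))\cdot\theta^\perp(\phi+\pi/2)=-(c_m-c_p)\cdot\theta(\phi)+r$ in closed form, and differentiate it directly. The derivative is $(c_m-c_p)\cdot\theta^\perp(\phi)$, which equals $\pm\sigma_m$-type quantities up to sign and is nonzero by \ref{ass:ess_non:A2}.
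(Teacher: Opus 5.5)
Your argument is correct and is essentially the paper's. The short chord through $B_m$ contributes a term of exact order $(\phi-\phi_*)^{1/2}$ with coefficient proportional to $(a_p-a_0)f_m\neq0$. The contributions of the rods in $S_0$ stay regular across $\phi_*$ because Assumption~\ref{ass:essential_noncancel}\ref{ass:ess_non:A3} places $B_m$ last on the backward ray, and continuity then gives the open set.

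The only difference is presentational. The paper shows this through the blow-up of $\partial_\phi\utan$, which forces it to bound $\beta'=O(|\phi-\phi_*|^{-1/2})$. Your comparison of function values needs only continuity of $\beta_{m,p}$ up to $\phi_*$.

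Two small corrections, neither affecting the conclusion. First, the derivative is exactly $\partial_\phi\rho_m=-(c_m-c_p)\cdot\theta^\perp(\phi)=\sigma_m(\phi)$. Second, $\beta_{m,p}$ is not analytic at $\phi_*$: it does involve $\ell_m$, through $d_m=\sigma_m-\ell_m/2$. It is, however, continuous up to $\phi_*$, so it only multiplies your square-root term by $1+O(\ell_m)$.
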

\begin{proof}
We show that the short chord created near the common tangent makes
the tangential trace nonconstant. Since the attenuation jump is
nonzero, the non-cancellation coefficient then cannot vanish
identically.

Let $m$ and $\phi_*$ be as in Assumption~ \ref{ass:essential_noncancel}, and let $\tau(\phi)$ be as in \eqref{eq:tangent:para}. In this proof, write $\rho_q(\phi)$, $\sigma_q(\phi)$, $d_q(\phi)$, and $\ell_q(\phi)$ for the ray quantities of Subsection~\ref{ssec:chords} evaluated at $(y_{\tau(\phi)},\phi+\pi/2)$. Both the point and the direction vary with $\phi$. Since $\theta^\perp(\phi+\pi/2)=-\theta(\phi)$, direct computation gives
\[
    \rho_q(\phi)=r-(c_q-c_p)\cdot\theta(\phi),
    \qquad
    \sigma_q(\phi)=-(c_q-c_p)\cdot\theta^\perp(\phi),\qquad
\rho_q'(\phi)=\sigma_q(\phi).
\]
Assumption~\ref{ass:ess_non:A2} gives $\rho_m(\phi_*)=\pm r$ and $\rho_m'(\phi_*)=\sigma_m(\phi_*)>0$. A first-order Taylor expansion gives
\begin{equation}\label{eq:rho_p,m_1}
\rho_m(\phi)^2-r^2=2\rho_m(\phi_*)\,\sigma_m(\phi_*)\,(\phi-\phi_*)+O\bigl((\phi-\phi_*)^2\bigr). 
\end{equation}
Hence, the tangent lines $L(\tau(\phi))$ enter $B_m$ on exactly one side of $\phi_*$. If $\rho_m(\phi_*)=r$, the crossing occurs for $\phi<\phi_*$.
If $\rho_m(\phi_*)=-r$, the crossing occurs for $\phi>\phi_*$.

Set $S_0:=\mcC(y_{\tau_*},\omega_*)$. Fix $q\notin\{p,m\}$. If $|\rho_q(\phi_*)|>r$, the line $L(\tau_*)$ misses $\overline B_q$. If $|\rho_q(\phi_*)|<r$, the line $L(\tau_*)$ crosses $B_q$ transversely.
In the latter case, Remark~\ref{rm:chord_analytic} gives $\sigma_q(\phi_*)\ne0$. When $\sigma_q(\phi_*)>0$, the chord lies on the backward side of $y_{\tau_*}$. When $\sigma_q(\phi_*)<0$, the chord lies on the forward side of $y_{\tau_*}$. By continuity, each of these geometric relations persists for $\phi$ near $\phi_*$. Since there are finitely many rods, we obtain
$\mcC(y_{\tau(\phi)},\phi+\pi/2)\setminus\{m\}=S_0$ throughout a neighborhood of $\phi_*$. Choose a sufficiently small one-sided interval $I$ with endpoint $\phi_*$ such that
\[
    |\rho_m(\phi)|<r,\quad \sigma_m(\phi)>0,\quad
   \mcC(y_{\tau(\phi)},\phi+\tfrac{\pi}{2})=S_0\cup\{m\},\qquad \phi\in I.
\]
All tangencies $\tau(\phi)$ with $\phi\in I$ are simple, because every rod other than $B_p$ is either missed or crossed transversely. By Assumption~\ref{ass:essential_noncancel}\ref{ass:ess_non:A3} and continuity, we may shorten $I$ so that $B_m$ is the last rod encountered along each backward ray.
For every $q\in S_0$, the ray $\mcR(y_{\tau(\phi)},\phi+\pi/2)$ crosses $B_q$ transversely on both sides of $\phi_*$. Remark~\ref{rm:chord_analytic} therefore gives real-analytic entry parameters and chord lengths near $\phi_*$. After decreasing this neighborhood if necessary, the order of the rods in $S_0$ is unchanged.

In this proof, $g\sim h$ means $g/h\to1$ as $\phi\to\phi_*$ within $I$. For $B_m$, \eqref{eq:rho_p,m_1} gives, for some constant $C>0$,
\begin{equation}\label{eq:ell_asy}
    \ell_m(\phi)
    =
    2\sqrt{r^2-\rho_m(\phi)^2}
    \sim
    C|\phi-\phi_*|^{1/2}.
\end{equation}
Using the exact derivative
\[
    \ell_m'(\phi)
    =
    -\frac{
        2\rho_m(\phi)\rho_m'(\phi)
    }{
        \sqrt{r^2-\rho_m(\phi)^2}
    },
\]
we also obtain
\begin{equation}\label{eq:ell_derivative_asy}
    |\ell_m'(\phi)|
    \sim
    \frac{C}{2}|\phi-\phi_*|^{-1/2}
    \longrightarrow\infty
\end{equation}
as $\phi\to\phi_*$ within $I$.

Let $\beta(\phi)$ be the attenuation accumulated along the segment from $y_{\tau(\phi)}$ to $y_{\tau(\phi)}-d_m(\phi)\theta_{\tau(\phi)}$.
This is $\beta_{m,p}$ from Subsection~\ref{ssec:utan}, evaluated at $\tau(\phi)$.
The segment lies in $\Omega$ by convexity.
Since $B_m$ is the last rod encountered along the backward ray, the segment contains the full chords through the rods in $S_0$.
Splitting the attenuation integral into moderator and rod contributions, we obtain
\begin{equation}\label{eq:beta_sum}
    \beta(\phi)
    =
    a_0d_m(\phi)
    +
    \sum_{q\in S_0}(a_q-a_0)\ell_q(\phi).
\end{equation}
The ray remains tangent to $B_p$, so there is no chord contribution from $B_p$ in this expression. The entry parameter is related to the chord length by
\[
    d_m(\phi)
    =
    \sigma_m(\phi)-\frac{\ell_m(\phi)}{2}.
\]
The derivatives of $\sigma_m$ and of the chord lengths
for $q\in S_0$ are bounded. Consequently, we obtain
\begin{equation}\label{eq:beta':asy}
    \beta'(\phi)
    =
    O\bigl(|\phi-\phi_*|^{-1/2}\bigr).
\end{equation}
Equation~\eqref{eq:beta_sum} also shows that $\beta$
remains bounded as $\phi\to\phi_*$.

Let $U(\phi)$ be the sum of the source contributions from the rods in $S_0$. In the notation of Subsection~\ref{ssec:utan}, this is the sum of $W_{q,p}e^{-\beta_{q,p}}$ over $q\in S_0$, evaluated at $\tau(\phi)$. Separating these contributions from that of the short chord, we obtain from Corollary~\ref{cor:u_tan_value}
\begin{equation}\label{eq:utan_local_sum}
    \utan[a,f](y_{\tau(\phi)},\theta_{\tau(\phi)})
    =
    U(\phi)
    +
    \frac{f_m}{a_m}
    \bigl(1-e^{-a_m\ell_m(\phi)}\bigr)e^{-\beta(\phi)},
    \qquad \phi\in I.
\end{equation}
The contributions to $U$ come from transversely crossed rods encountered before $B_m$. The corresponding entry parameters, chord lengths and accumulated attenuations have bounded derivatives. The sum $U$ therefore satisfies
\begin{equation}\label{eq:other_term_bounded}
    U'(\phi)=O(1).
\end{equation}
Differentiating the contribution from $B_m$ gives
\[
    \begin{aligned}
        &\frac{d}{d\phi}
        \left[
            \frac{f_m}{a_m}
            \bigl(1-e^{-a_m\ell_m(\phi)}\bigr)
            e^{-\beta(\phi)}
        \right]\\
        &\qquad=
        e^{-\beta(\phi)}
        \left[
            f_m e^{-a_m\ell_m(\phi)}\ell_m'(\phi)
            -
            \frac{f_m}{a_m}
            \bigl(1-e^{-a_m\ell_m(\phi)}\bigr)
            \beta'(\phi)
        \right].
    \end{aligned}
\]
By \eqref{eq:ell_asy}, we have
\[
    1-e^{-a_m\ell_m(\phi)}=O\bigl(|\phi-\phi_*|^{1/2}\bigr).
\]
Together with \eqref{eq:beta':asy}, this shows that the second term in brackets is $O(1)$.
The absolute value of the first term tends to infinity by \eqref{eq:ell_derivative_asy} and $f_m\ne0$.
Since $e^{-\beta(\phi)}$ remains bounded away from zero, \eqref{eq:utan_local_sum} and \eqref{eq:other_term_bounded} imply
\begin{equation}\label{eq:utan_derivative_blowup}
    \left|
        \frac{d}{d\phi}
        \utan[a,f](y_{\tau(\phi)},\theta_{\tau(\phi)})
    \right|
    \longrightarrow\infty.
\end{equation}

Thus, the tangential trace is nonconstant on $I$.
Since $a_p-a_0\ne0$, the function $\phi\mapsto F_p^{(a,f)}(\tau(\phi))$ cannot vanish identically on $I$.
By continuity, there is a nonempty open subinterval $J\subset I$ on which $F_p^{(a,f)}(\tau(\phi))\ne0$.
The map $\phi\mapsto\tau(\phi)$ sends $J$ to a nonempty relatively open subset of $\mcT_p$ consisting of simple tangencies satisfying \eqref{eq:noncancellation:condtion}.
\end{proof}
\begin{remark}\label{rm:noncancel_sufficient}
\begin{enumerate}[label=(\alph*)]
    \item Assumption~\ref{ass:essential_noncancel} is sufficient, not necessary. Suppose that the backward ray at a simple tangency $\tau$ of $B_p$ meets no emitting rod. Then $\utan[a,f](y_\tau,\theta_\tau)=0$.
Consequently, $F_p^{(a,f)}(\tau)=f_p$. If $f_p>0$, the non-cancellation condition holds even when $a_p=a_0$.
For $H(\omega)=R_af(x_\tau,\theta(\omega))$, we have $\Phi_D[H]\ne0$. If also $a_p=a_0$, then $\Phi_N[H]=0$. The quotient \eqref{eq:Delta_a:ND} therefore returns $a_p=a_0$. The proof of Theorem~\ref{tm:recovery} requires only one non-cancelling simple tangency per rod. The reconstruction argument applies whenever such tangencies are available.
Under the row-separation assumption of Section~\ref{sec:stability} and $f_{\min}>0$, Proposition~\ref{prop:existence_admissible_family} provides an admissible tangency for every rod.
\item Lemma~\ref{lm:noncancellation} allows the target rod $B_p$ to be non-emitting. For $f_p=0$ and $a_p\ne a_0$, an emitting rod $B_m$ satisfying Assumption~\ref{ass:essential_noncancel} provides the variation of the tangential trace used in the proof.
\item If $a_p=a_0$ and $f_p=0$, the coefficients in $B_p$ agree with those in the moderator. At every simple tangency of $B_p$, \eqref{eq:PhiD_H}--\eqref{eq:PhiN_H} therefore give $\Phi_D[H]=\Phi_N[H]=0$. The quotient formula does not recover this rod's coefficients from these singular features.
\item The attenuation reconstruction uses angular limits of the data. It does not assert recovery from finitely many point samples.
\end{enumerate} 
\end{remark}

\begin{theorem}[Unique recovery]\label{tm:recovery}
Let $(a,f)\in X$, and suppose that
Assumption~\ref{ass:essential_noncancel} holds for every rod
$B_p$, $p=1,\dots,K$. Then $(a,f)$ can be reconstructed uniquely from the full data
\[
    \bigl\{
        R_af(\psi,\omega):
        (\psi,\omega)\in[0,2\pi)^2
    \bigr\}.
\]
\end{theorem}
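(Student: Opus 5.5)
The plan is to recover the attenuation first, rod by rod, from the singular functionals. Then, with $a$ known, we recover $f$ by inverting a linear attenuated transform.

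\textbf{Step 1: a non-cancelling simple tangency for each rod.} Fix $p\in\{1,\dots,K\}$. By hypothesis, Assumption~\ref{ass:essential_noncancel} holds for $B_p$. Lemma~\ref{lm:noncancellation} then supplies a nonempty relatively open family of simple tangencies $\tau=(\psi,\omega_0)\in\mcS_p$ at which $F_p^{(a,f)}(\tau)\neq0$.

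This family depends on the unknown $(a,f)$, so we must be able to locate a good tangency from the data alone. For this we use Proposition~\ref{prop:HR}. For each $\tau\in\mcS_p$, the function $H_\tau(\omega)=R_af(x_\tau,\theta(\omega))$ is known on the punctured window of Lemma~\ref{lm:window}, because the data are full. The geometric quantities $\delta_\tau$, $\epsilon_\tau$, $L_\tau$ and $r$ depend only on $\mcG$, so $\Phi_D[H_\tau]$ and $\Phi_N[H_\tau]$ are computable from data. By \eqref{eq:PhiD_H}, and since $e^{-Da}>0$ and $L_\tau>0$ (Lemma~\ref{lm:L_bounds}), we have $\Phi_D[H_\tau]\neq0$ exactly when $F_p^{(a,f)}(\tau)\neq0$. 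The reconstruction rule is therefore: pick any $\tau\in\mcS_p$ with $\Phi_D[H_\tau]\neq0$. Such a $\tau$ exists by Lemma~\ref{lm:noncancellation}.

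\textbf{Step 2: the attenuation.} At the tangency chosen in Step 1, formula \eqref{eq:Delta_a:ND} gives
\[
a_p=a_0+\frac{\Phi_N[H_\tau]}{\sqrt{2rL_\tau}\,\Phi_D[H_\tau]}.
\]
This value does not depend on which admissible $\tau$ is chosen, because \eqref{eq:PhiN_H} holds at every simple tangency. Doing this for $p=1,\dots,K$ determines $a$ completely, since $a_0$ is known. Uniqueness follows: if $(a,f)$ and $(\tilde a,\tilde f)$ produce the same data, the computed quotients coincide, so $a=\tilde a$.

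\textbf{Step 3: the source.} With $a$ known, the map $f\mapsto R_af$ is linear. The attenuation $a$ is bounded and compactly supported, and $f$ is a bounded compactly supported function in $L^\infty$. Novikov's inversion formula \cite{Novikov:2002} recovers $f$ from $R_af$ on all lines. The data $\{R_af(\psi,\omega)\}$ over $[0,2\pi)^2$ cover every oriented line meeting $\Omega$, since every such line enters through some $x(\psi)$. Lines missing $\Omega$ carry zero data. Hence $f$ is determined, and so is each $f_j$, since $f_j$ is the value of $f$ on $B_j$.

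\textbf{Main obstacle.} The delicate point is justifying Novikov's formula for discontinuous, piecewise constant $a$ and $f$. The formula is classically stated for smooth or at least Hölder coefficients. I would handle this in one of two ways. One option is to invoke the $L^2$/distributional validity of the formula for bounded compactly supported coefficients, as in Boman--Strömberg \cite{Boman:Stromberg:2004}. The other is to argue through injectivity, which is all we need. Here $f$ is finite dimensional: a vector $(f_1,\dots,f_K)$ with $R_af=\sum_j f_jR_a\chi_{B_j}$. So it suffices that the functions $R_a\chi_{B_j}$ are linearly independent. This also follows by the singularity argument: at a simple tangency of $B_j$ whose backward ray meets no other rod, only $\chi_{B_j}$ contributes a square-root singularity, with $\Phi_D\propto f_j$ and $u_{\tan}=0$. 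Such a tangency exists, for instance near the outermost rods and then inductively. I would present the Novikov route and mention this finite-dimensional fallback.
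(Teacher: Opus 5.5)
Your proposal is correct and follows the paper's proof. Each $a_p$ comes from the quotient \eqref{eq:Delta_a:ND} at a simple tangency supplied by Lemma~\ref{lm:noncancellation} and located from the data via $\Phi_D[H]\neq0$, and $f$ then comes from Novikov's formula. Your ``main obstacle'' is settled directly there by noting that the zero extensions of $a,f$ lie in Novikov's class $L^{\infty,1+\varepsilon}(\mathbb R^2,\mathbb R)$ and using the distributional interpretation of \cite[Theorem~2.1 and Remark~2.6]{Novikov:2002}.

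If you keep the finite-dimensional fallback, do not require a tangency whose backward ray misses every other rod. For an interior rod of a near-close-packed hexagonal array none exists, which is why Proposition~\ref{prop:existence_admissible_family} needs row separation. Instead, in a vanishing combination $\sum_j c_jR_a\chi_{B_j}$, pick a rod with $c_j\neq0$ whose center is extreme among those with nonzero coefficient. Use a slightly perturbed, simple tangent line there that misses all other such rods, and apply \eqref{eq:PhiD_H}, extended by linearity to signed sources, to force $c_j=0$.
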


\begin{proof}
Fix $p \in \{1,\dots,K\}$. By Lemma~\ref{lm:noncancellation}, there exists $\tau=(\psi,\omega_0)\in\mcS_p$ at which
$F_p^{(a,f)}(\tau)\ne0$. For $H(\omega):=R_af(\psi,\omega)$,
Proposition~\ref{prop:HR} gives $\Phi_D[H]\ne0$, and we obtain
\[
    a_p=a_0+
    \frac{\Phi_N[H]}{\sqrt{2rL_\tau}\,\Phi_D[H]}.
\]
The geometry is known, and the data determine both functionals,
so this formula reconstructs $a_p$. The condition $\Phi_D[H]\ne0$
identifies the required tangencies from the data. Repeating the
argument for every rod recovers $a$.

Once $a$ is known, we recover $f$ by Novikov's inversion formula. The zero extensions of $a$ and $f$ are bounded and compactly supported. Hence, both extensions belong to $L^{\infty,1+\varepsilon}(\mathbb R^2,\mathbb R)$ as defined in \cite[(1.9)--(1.10)]{Novikov:2002}, for every $\varepsilon>0$. The full boundary-angle data determine the transform on all oriented lines meeting $\Omega$. The transform vanishes on every line missing $\Omega$. Represent each oriented line by the point on the line closest to the origin and the direction of the line. With this representation, the data coincide with $P_af$ in \cite[(1.1)--(1.2)]{Novikov:2002}. The inversion formula, with the distributional interpretation in
\cite[Theorem~2.1 and Remark~2.6]{Novikov:2002}, therefore recovers $f$ almost everywhere.
\end{proof}
Under the assumptions of Section~\ref{sec:stability}, Remark~\ref{remark:reconstruction-procedure} gives explicit formulas for both coefficients and a self-contained proof of uniqueness in that setting.
\section{Singular stability estimates}\label{sec:stability}

For stability, we use tangencies at which the backward ray meets
no other rod. The tangential trace then vanishes, and a positive
lower bound on the rod sources gives a uniform non-cancellation
bound.  
\begin{definition}[Admissible tangency]\label{def:admissible}
A simple tangency $\tau=(\psi,\omega_0)\in\mcS_m$ is called \emph{admissible} if the backward ray $\mcR(y_\tau,\omega_0)$ meets no closed rod other than $\overline{B_m}$, that is, if
\begin{equation}\label{eq:clear_tangency}
    \mcR(y_\tau,\omega_0)\cap\overline{B_q}=\emptyset,
    \qquad q\ne m.
\end{equation}
\end{definition}
\begin{assumption}\label{ass:stability}
\begin{enumerate}[label=\textup{(A\arabic*)},ref=(A\arabic*),leftmargin=3em]
    \item \label{ass:stab:X} The geometry $\mcG=(\ro,r,\{c_q\}_{q=1}^K)$ of \eqref{eq:geometry} and the parameter space $X=X(a_{\min},a_{\max},f_{\min},f_{\max},a_0,\mcG)$ of Definition~\ref{def:parameter space} are fixed.
  \item\label{ass:stab:fmin} \emph{Emitting rods.}  We assume $f_{\min}>0$. Thus, every rod source is bounded below by the same positive constant throughout $X$.
   \item\label{ass:stab:rows} \emph{Row separation.} The rod centers lie on parallel lines, called  \emph{rows}. Any two distinct rows are separated by more than one rod diameter. Equivalently, there is a unit vector $n\in\mathbb S^1$ such that, for every $p,q\in \{1,\cdots, K\}$, we have
    \begin{equation}\label{eq:row_separation}
      (c_q-c_p)\cdot n=0  
        \qquad\text{or }\qquad
       |(c_q-c_p)\cdot n|>2r .
    \end{equation}
  The vector $n$ is a common unit normal to the rows.
    \item\label{ass:stab:selection} \emph{Selected tangencies.} For every $m\in\{1,\dots,K\}$, an admissible tangency $\tau_m=(\psi_m,\omega_m)\in\mcS_m$ is fixed.
    \item\label{ass:stab:data} \emph{Data.} The measurement set $\mcM$ of Definition~\ref{def:data} contains the angular window of every selected tangency, so that
    \[
        \{\psi_m\}\times(\omega_m-\delta_{\tau_m},\omega_m+\delta_{\tau_m})\subset\mcM,
        \qquad m=1,\dots,K,
    \]
    that is, \eqref{eq:measurement_windows} holds for $\tau=\tau_m$.
    \end{enumerate}
\end{assumption}
Proposition~\ref{prop:existence_admissible_family} below shows that \ref{ass:stab:rows} guarantees an admissible tangency for every rod, as required by \ref{ass:stab:selection}. The tangencies $\tau_1,\dots,\tau_K$ may be chosen freely among the admissible tangencies. The constants in Theorem~\ref{tm:stability} are independent of this choice. Assumption~\ref{ass:stab:data} requires data at most $K$ boundary positions, with an interval of directions at each position.

The lower bound \ref{ass:stab:fmin} makes the estimates uniform over
$X$. At an admissible tangency, the tangential trace vanishes, so
the non-cancellation coefficient equals $f_m\ge f_{\min}$ and the
denominator in \eqref{eq:Delta_a:ND} is bounded away from zero. See Proposition~\ref{prop:existence_admissible_family}. The uniform
estimate therefore excludes non-emitting rods. For uniqueness
without this restriction, see Remark~\ref{rm:noncancel_sufficient}.
\begin{remark}\label{rm:clear_simple_tangency}
The rectangular and hexagonal arrangements of fuel assemblies motivate \ref{ass:stab:rows} as in \cite[Fig.~3.2]{Virta:2024}. The row-separation condition \eqref{eq:row_separation} must be checked using the center spacing and rod diameter, rather than inferred from the arrangement alone.
\end{remark}
\begin{proposition}[Existence of admissible tangencies]
\label{prop:existence_admissible_family}
Suppose that assumptions~\ref{ass:stab:X} and \ref{ass:stab:rows} hold. Then every rod $B_m$ has an admissible tangency. At every admissible tangency $\tau\in\mcS_m$, the non-cancellation coefficient reduces to
\[
    F_m^{(a,f)}(\tau)=f_m
    \qquad\text{for all }(a,f)\in X.
\]
In particular, under \ref{ass:stab:fmin}, $F_m^{(a,f)}(\tau)\ge f_{\min}>0$ uniformly in $(a,f)\in X$.
\end{proposition}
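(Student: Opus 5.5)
The second claim is the easy one, so I will dispose of it first. Let $\tau\in\mcS_m$ be admissible. By \eqref{eq:clear_tangency}, the backward ray $\mcR(y_\tau,\omega_0)$ meets no $\overline{B_q}$ with $q\ne m$, so $\mcC(y_\tau,\omega_0)=\emptyset$. Corollary~\ref{cor:u_tan_value}\ref{it:cor_value} then gives $\utan[a,f](y_\tau,\theta_\tau)=0$ as an empty sum. Substituting into \eqref{eq:F_m_def} yields $F_m^{(a,f)}(\tau)=f_m$ for every $(a,f)\in X$, and under \ref{ass:stab:fmin} this is at least $f_{\min}>0$.

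For existence I would take tangent lines parallel to the rows. Fix $m$ and let $n$ be the common unit normal of \ref{ass:stab:rows}. The two candidate tangency points are $y_\pm=c_m\pm rn$, with the tangent direction $\theta_0$ perpendicular to $n$. The orientation is chosen so that the backward ray from $y_\pm$ runs toward the end of the row containing $c_m$ where no rod of that row lies ahead. Concretely, order the rods of the row by the coordinate $(c_q-c_m)\cdot\theta_0$, and take $B_m$ to be the extreme rod in the backward direction, or reverse $\theta_0$ if needed.

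This choice does not handle every rod, because interior rods of a row have rods of the same row on both sides along the row direction. So I would instead set $\theta_0 = \pm n^\perp$ and use the line $L(y_+,\theta_0)$, which lies at signed distance $r$ from the row line. Every rod $B_q$ has $(c_q-c_m)\cdot n$ equal to $0$ or of absolute value greater than $2r$, so its distance to this line is $r$ or is $|(c_q-c_m)\cdot n - r|$.

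1. Same-row rods, where $(c_q-c_m)\cdot n=0$, are exactly tangent to the line. Their closed disks touch it at $c_q+rn$, and that point lies on the backward ray from $y_+$ for one of the two orientations. This tangency is fatal both for admissibility and for simplicity, so the full tangent line parallel to the row is not enough.
2. I would therefore tilt the line slightly. Rotate the direction by a small angle about $y_+$, chosen so that the backward half-ray moves away from the row line, into the side $\{(x-c_m)\cdot n>r\}$, and the backward ray is sent to the side where it leaves the row.
3. For same-row rods, the backward ray then stays strictly farther than $r$ from their centers. The signed distance along $n$ increases linearly with $s$ and starts at $r$; the only contact is at $s=0$, with $B_m$ itself.
4. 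For rods in other rows, the gap $|(c_q-c_m)\cdot n|-r>r$ leaves a strict margin. This gives distance greater than $r$ for bounded $s$ once the tilt is small enough.

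The main obstacle is step 4. The backward ray is long (up to length $2\ro$), and the tilt makes its $n$-coordinate drift, so it may eventually approach a row on the far side, namely the rows with $(c_q-c_m)\cdot n>2r$.

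To handle this I will tilt toward the side $-n$ only if $y_+$ was taken on the outer side. More precisely, I choose the sign of $n$ so that $B_m$'s row is extremal: I take $y=c_m+rn$ where $n$ points toward the side with no rows beyond, so that rows on that side do not exist. Rods of this outermost row then have admissible tangencies with the backward ray tilted outward. For rows in the interior, I would use instead a tilt of order $\epsilon$ small relative to $(|(c_q-c_m)\cdot n|-2r)/(2\ro)$. The backward ray segment inside $\Omega$ has length at most $2\ro$, so its $n$-coordinate stays within $r+2\ro\epsilon$ of the row, which is less than $|(c_q-c_m)\cdot n|-r$ for the adjacent rows. This uses the strict inequality in \eqref{eq:row_separation}, which makes the minimal gap positive over finitely many rod pairs.

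Finally I verify that the resulting tangency is simple, that is, not tangent to other rods, by possibly perturbing the tilt within the open range using Lemma~\ref{lm:discrete_tangency}. I also check that $t_m>0$ by taking $x(\psi)$ to be the backward exit point, as in \eqref{eq:tangent:para}.
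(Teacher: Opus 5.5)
Your derivation of $F_m^{(a,f)}(\tau)=f_m$ at an admissible tangency matches the paper and is fine. The existence part, however, fails at step~2: rotating the direction \emph{about} $y_+=c_m+rn$ destroys the tangency. Write the tilted direction as $\vartheta=\cos\epsilon\,e'-\sin\epsilon\,n$, with $e'$ a unit vector along the row and $\epsilon>0$, so that the backward ray drifts into $\{(x-c_m)\cdot n>r\}$ as you intend. Then $|y_++s\vartheta-c_m|^2=r^2-2rs\sin\epsilon+s^2<r^2$ for $0<s<2r\sin\epsilon$. So the forward half of $L(y_+,\vartheta)$ enters $B_m$, the line is a secant, and the configuration is not in $\mcT_m$. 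Your step~3 checks only the backward half-line, which indeed touches $\overline{B_m}$ only at $s=0$, but tangency is a property of the whole line. To keep a tangency, the contact point must move with the tilt: take $y=c_m+r\varpi$ with $\varpi=\sin\epsilon\,e'+\cos\epsilon\,n\perp\vartheta$. Then $(y-s\vartheta-c_m)\cdot n=r\cos\epsilon+s\sin\epsilon$ starts \emph{below} $r$. Your argument that ``the signed distance along $n$ starts at $r$ and increases'' therefore no longer shows that the backward ray misses the same-row rods for small $s$.

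The missing step is the paper's treatment of the same row. For $c_q=c_m+\xi e'$, the distance from $c_q$ to the tilted tangent line is $|r-\xi\sin\epsilon|$, so rods with $\xi<0$ are missed by the entire line. For $\xi>0$ one has $(y-c_q)\cdot\vartheta=-\xi\cos\epsilon<0$. Hence $|y-s\vartheta-c_q|^2\ge|y-c_q|^2>r^2$ for all $s\ge0$, and any chord through $B_q$ lies on the forward side. Your drift estimate for the other rows survives the correction: inside $\Omega$ the backward ray has $n$-coordinate in $[r\cos\epsilon,\,r\cos\epsilon+2\ro\sin\epsilon]$, and the strict gap in \ref{ass:stab:rows} handles this for small $\epsilon$. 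It does so for every row at once, so your split into extremal and interior rows is unnecessary; in any case, one cannot choose the sign of $n$ to make an interior row extremal. After these repairs, your appeal to Lemma~\ref{lm:discrete_tangency} for simplicity is legitimate. The good tilts contain an interval $(0,\epsilon_0)$, and only finitely many tilts give a line tangent to a second rod. The paper instead obtains simplicity directly. With $d>2r$ the minimal row gap and $w_e$ the maximal spread of the centers along the row, it chooses $\alpha$ with $d\cos\alpha-w_e\sin\alpha>2r$ and $w_e\sin\alpha<2r$. This makes the whole line miss the other rows and cross the forward same-row rods strictly transversally.
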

\begin{proof}
We construct the tangent direction by tilting it slightly from the row direction, as illustrated in
Figure~\ref{fig:Backward-rays}. Let $n$ be the row normal of \ref{ass:stab:rows}, and let $e$ be the unit vector whose counterclockwise rotation through $\pi/2$ is $n$. Since there are finitely many rows, choose $d>2r$ such that
every pair of distinct rows is separated by at least $d$. We set
\[
   w_e:=\max_{p,q}|(c_q-c_p)\cdot e|.
\]
Choose $\alpha\in(0,\pi/2)$ sufficiently small that
\[
    d\cos\alpha-w_e\sin\alpha>2r,
    \qquad
  w_e\sin\alpha<2r.
\]
This is possible by continuity at $\alpha=0$. We define
\[
    \vartheta:=\cos\alpha\,e+\sin\alpha\,n,
    \qquad
    \varpi:=-\sin\alpha\,e+\cos\alpha\,n.
\]
Fix $m \in \{1,\dots,K\}$ and set $y:=c_m-r\varpi$. The line $L(y,\vartheta)$ is tangent to $\partial B_m$. For $q\ne m$, write $c_q-c_m=\xi e+\eta n$. The distance from $c_q$ to $L(y,\vartheta)$ is
\[
    |(c_q-y)\cdot\varpi|=|r-\xi\sin\alpha+\eta\cos\alpha|.
\]
If this distance is greater than $r$, then $L(y,\vartheta)$ misses $\overline{B_q}$. If this distance is less than $r$, then $L(y,\vartheta)$ intersects the interior of $B_q$.

If $B_q$ belongs to a different row, then $|\eta|\ge d$, and we obtain
\begin{align*}
    |r-\xi\sin\alpha+\eta\cos\alpha|
    &\ge
    |\eta|\cos\alpha-|r-\xi\sin\alpha|\ge
    d\cos\alpha-r-|\xi|\sin\alpha\\&\ge
    d\cos\alpha-r-w_e\sin\alpha>r.
\end{align*}
Thus, $L(y,\vartheta)$ misses $\overline{B_q}$.

If $B_q$ belongs to the same row, then $\eta=0$ and $\xi\ne0$. When $\xi<0$, we have
\[
    |r-\xi\sin\alpha|
    =
    r+|\xi|\sin\alpha
    >
    r.
\]
Thus, $L(y,\vartheta)$ again misses $\overline{B_q}$. When $\xi>0$, we have
\[
      0<\xi\sin\alpha\le w_e\sin\alpha<2r,
    \qquad
    |r-\xi\sin\alpha|<r.
\]
The line $L(y,\vartheta)$ crosses $B_q$ transversely. Moreover, we have
\[
    (y-c_q)\cdot\vartheta=-\xi\cos\alpha<0.
\]
Since the closed rods are pairwise disjoint, we have $|y-c_q|>r$. For every $s\ge0$, we therefore obtain
\[
    |y-s\vartheta-c_q|^2
    =|y-c_q|^2-2s(y-c_q)\cdot\vartheta+s^2>r^2.
\]
Hence, the backward ray $\{y-s\vartheta:s\ge0\}$ misses $\overline{B_q}$. The chord $L(y,\vartheta)\cap B_q$ therefore lies entirely on the forward side of $y$.

These cases cover every $q\ne m$. Thus, $L(y,\vartheta)$ is tangent to no rod boundary other than $\partial B_m$. The backward ray $\{y-s\vartheta:s\ge0\}$ meets no other closed rod. Choose the entry point of the oriented line $L(y,\vartheta)$ into $\Omega$ as the boundary point. This gives a simple tangency $\tau$ with $y_\tau=y$ and $\theta_\tau=\vartheta$ satisfying \eqref{eq:clear_tangency}. 

For any admissible tangency $\tau=(\psi,\omega_0)\in\mcS_m$, condition \eqref{eq:clear_tangency} gives $\mcC(y_\tau,\omega_0)=\emptyset$. Corollary~\ref{cor:u_tan_value} therefore yields for any $(a,f)\in X$ that
\[
    \utan[a,f](y_\tau,\theta_\tau)=0.
\]
Consequently, $F_m^{(a,f)}(\tau)=f_m$. Under \ref{ass:stab:fmin}, we obtain $F_m^{(a,f)}(\tau)\ge f_{\min}>0$ uniformly in $(a,f)\in X$.
\end{proof}

\begin{figure}
    \centering
    \begin{tikzpicture}
    \begin{scope}[scale=1.5]
        \path[clip] (-10,3.1) rectangle (-2,-0.4); 

        \coordinate (c) at (-0.9, -1.05);
        \draw[gray, dashed, fill=gray!20!white, fill opacity=0.5] ($(c) + (-10cm, 26mm)$) rectangle +(10cm, 8mm); 
        \foreach \i in {-5, ..., -1}{
            \foreach \j in {0, ..., 3}{
                \def\ar{3}
                \def\a{\fpeval{\ar+90}}
                \coordinate (xm) at ($(c) + 2*(\i + 0.5*\j,\j)$); 
                \coordinate (xstar) at ($(xm) + (\a:6mm)$); 
                \draw[black, fill=blue, opacity=0.5] (xm) circle (6mm);
                \if\j1
                    \draw[red, opacity=.5] (xstar) -- +(\ar:20cm); 
                    \fill[red, opacity=.7] (xstar) circle (1pt) node[above] {};
                \fi
            }
        }

    \end{scope}
    \end{tikzpicture}
    \caption{Admissible tangencies obtained by tilting the tangent lines relative to the row direction.
    Each line is tangent to exactly one rod. The backward ray from each tangency point meets no other rod.}
    \label{fig:Backward-rays}
\end{figure}
\subsection{The stability seminorm and the main estimate}\label{ssec:stability_estimate}
Let Assumption~\ref{ass:stability} hold. Since the selection $\tau_1,\dots,\tau_K$ is finite, we have
\begin{equation}\label{eq:delta_star}
    \min_{1\le m\le K}\delta_{\tau_m}>0,
\end{equation}
and Lemma~\ref{lm:L_bounds} gives
\begin{equation}\label{eq:L_global_bounds}
    0<L_{\min}\le L_{\tau_m}\le L_{\max}<\infty,\qquad m=1,\dots,K.
\end{equation}
\begin{definition}[Stability seminorm]\label{def:stab_seminorm}
Let Assumption~\ref{ass:stability} hold. For a real-valued
function $H$ on $\mcM$ such that
$H(\psi_m,\cdot)|_{I_{\tau_m}^\circ}\in\mcV_{\tau_m}$
for every $m$, we define
\begin{equation}\label{eq:data_stability_seminorm}
    \|H\|_{\star}
    :=\max_{1\le m\le K}
      \|H(\psi_m,\cdot)\|_{\tau_m}.
\end{equation}
\end{definition}
These functions form a vector space on which the finite maximum
of the local seminorms is a seminorm. For two
pairs in $X$, Proposition~\ref{prop:HR} and the vector space structure of $\mcV_{\tau_m}$ show that the data difference belongs to this space. Assumption~\ref{ass:stab:data} ensures that the required restrictions are available.
\begin{remark}\label{remark:reconstruction-procedure}
Under Assumption~\ref{ass:stability}, the selected tangencies also give explicit reconstruction formulas. For each $m$, write $H_m:=R_af(\psi_m,\cdot)$. By Proposition~\ref{prop:existence_admissible_family}, one has $F_m^{(a,f)}(\tau_m)=f_m\ge f_{\min}>0$. Hence, Proposition~\ref{prop:HR} gives
\[
    a_m
    =a_0+
      \frac{\Phi_N[H_m]}
           {\sqrt{2rL_{\tau_m}}\,\Phi_D[H_m]},
    \qquad
    f_m
    =\frac{e^{Da(y_{\tau_m},\theta_{\tau_m})}}
           {\sqrt{2rL_{\tau_m}}}\,\Phi_D[H_m].
\]
First recover all attenuation coefficients using the first formula.
The beam transform in the second formula is then computable from
the known geometry and the recovered attenuation, so this formula
recovers every source coefficient.
Thus, $2K$ singular functionals determine $(a,f)$ in this setting.
These functionals require angular limits, and they are not individual
point measurements.
\end{remark}

\begin{theorem}\label{tm:stability}
Let Assumption~\ref{ass:stability} hold. Then there exist constants $\mathscr C_a,\mathscr C_f>0$, depending only on $X$ and $\mcG$, and not on the selected tangencies $\tau_1,\dots,\tau_K$, such that
\begin{equation}\label{eq:a_stability}
    \|a^{(1)}-a^{(2)}\|_{\ell^\infty}
    \le
    \mathscr C_a
    \|R_{a^{(1)}}f^{(1)}-R_{a^{(2)}}f^{(2)}\|_{\star}
\end{equation}
and
\[
    \|f^{(1)}-f^{(2)}\|_{\ell^\infty}
    \le
    \mathscr C_f
    \|R_{a^{(1)}}f^{(1)}-R_{a^{(2)}}f^{(2)}\|_{\star}
\]
for every $(a^{(1)},f^{(1)}),(a^{(2)},f^{(2)})\in X$.
\end{theorem}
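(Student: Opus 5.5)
The plan is to work rod by rod at the selected admissible tangency $\tau_m$ and use the explicit formulas of Remark~\ref{remark:reconstruction-procedure}. Write $H_m^{(i)}:=R_{a^{(i)}}f^{(i)}(\psi_m,\cdot)$ and $H_m:=H_m^{(1)}-H_m^{(2)}$. By linearity of $\Phi_D,\Phi_N$ (Remark~\ref{rm:seminorm}) and Definition~\ref{def:stab_seminorm},
\[
|\Phi_D[H_m^{(1)}]-\Phi_D[H_m^{(2)}]|\le\|H\|_\star,\qquad
|\Phi_N[H_m^{(1)}]-\Phi_N[H_m^{(2)}]|\le\|H\|_\star,
\]
where $H$ is the data difference. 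From Proposition~\ref{prop:HR} and Proposition~\ref{prop:existence_admissible_family}, we have $\Phi_D[H_m^{(i)}]=\sqrt{2rL_{\tau_m}}\,f^{(i)}_m e^{-D a^{(i)}(y_{\tau_m},\theta_{\tau_m})}$. I would bound this from below and above uniformly in $X$ and in the choice of tangencies. Lemma~\ref{lm:L_bounds} gives $L_{\min}\le L_{\tau_m}\le L_{\max}$. The beam transform satisfies $0\le Da\le 2\ro\max(a_0,a_{\max})$, since the forward ray stays in a disk of diameter $2\ro$. Hence
\[
\kappa:=\sqrt{2rL_{\min}}\,f_{\min}e^{-2\ro\max(a_0,a_{\max})}\le\Phi_D[H_m^{(i)}]\le \sqrt{2rL_{\max}}\,f_{\max}=:K_D .
\]
Also $|\Phi_N[H_m^{(i)}]|\le \max(a_{\max},a_0)\sqrt{2rL_{\max}}\,K_D=:K_N$.

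For the attenuation, I would use the quotient formula. Write $a^{(i)}_m-a_0=N_i/(\sqrt{2rL_{\tau_m}}D_i)$ with $N_i=\Phi_N[H_m^{(i)}]$ and $D_i=\Phi_D[H_m^{(i)}]$. The elementary identity $N_1/D_1-N_2/D_2=(N_1-N_2)/D_1+N_2(D_2-D_1)/(D_1D_2)$ then gives
\[
|a^{(1)}_m-a^{(2)}_m|\le\frac{1}{\sqrt{2rL_{\min}}}\Bigl(\frac1\kappa+\frac{K_N}{\kappa^2}\Bigr)\|H\|_\star .
\]
Taking the maximum over $m$ yields \eqref{eq:a_stability} with a constant depending only on $X$ and $\mcG$.

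For the source, I would use $f^{(i)}_m=e^{Da^{(i)}(y_{\tau_m},\theta_{\tau_m})}D_i/\sqrt{2rL_{\tau_m}}$ and split the difference as
\[
f^{(1)}_m-f^{(2)}_m=\frac{e^{Da^{(1)}}(D_1-D_2)+(e^{Da^{(1)}}-e^{Da^{(2)}})D_2}{\sqrt{2rL_{\tau_m}}}.
\]
The first term is bounded by $e^{2\ro\max(a_0,a_{\max})}\|H\|_\star/\sqrt{2rL_{\min}}$. For the second term, the mean value theorem gives $|e^{Da^{(1)}}-e^{Da^{(2)}}|\le e^{2\ro\max(a_0,a_{\max})}|Da^{(1)}-Da^{(2)}|$. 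Since both attenuations agree in the moderator and the forward ray has length at most $2\ro$, we get $|Da^{(1)}-Da^{(2)}|\le 2\ro\|a^{(1)}-a^{(2)}\|_{L^\infty}=2\ro\|a^{(1)}-a^{(2)}\|_{\ell^\infty}$ by Remark~\ref{rm:X_compact}. The already proved bound \eqref{eq:a_stability} then controls this, which gives the $f$-estimate with $\mathscr C_f$ depending on $\mathscr C_a$ and the same geometric constants.

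The main obstacle is making all constants uniform and independent of the choice of the $\tau_m$, and above all obtaining the lower bound $\kappa$ on the denominator. That lower bound rests on the vanishing of the tangential trace at admissible tangencies, which makes $F_m=f_m\ge f_{\min}$. It also rests on the uniform bound for $Da$ and the lower bound $L_{\min}$ from Lemma~\ref{lm:L_bounds}. Once these are in place, the rest is algebra. The one remaining check is that the data difference lies in the domain of $\|\cdot\|_\star$, and this follows from Proposition~\ref{prop:HR} and the linearity of $\mcV_{\tau_m}$.
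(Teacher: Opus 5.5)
Your proposal is correct and follows essentially the paper's proof: the uniform lower bound $\sqrt{2rL_{\min}}\,f_{\min}e^{-2r_{\mathrm{out}}\max\{a_0,a_{\max}\}}$ on $\Phi_D[H^{(i)}]$ from admissibility, Lemma~\ref{lm:L_bounds} and the beam bound, then a difference identity for $a_m$, and for $f_m$ exactly the paper's splitting (with $b_i:=Da^{(i)}(y_{\tau_m},\theta_{\tau_m})$ your second term equals $f^{(2)}_m(e^{b_1-b_2}-1)$) combined with $|b_1-b_2|\le 2r_{\mathrm{out}}\|a^{(1)}-a^{(2)}\|_{\ell^\infty}$. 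The only differences affect the constants. The paper subtracts the multiplied-out identities $\Phi_N[H^{(i)}]=(a^{(i)}_m-a_0)\sqrt{2rL_{\tau_m}}\,\Phi_D[H^{(i)}]$, so only one inverse power of the lower bound appears instead of your $\kappa^{-2}$, and it uses $1-e^{-t}\le t$ after assuming $b_1\le b_2$ instead of the mean value theorem, which avoids your extra factor $e^{2r_{\mathrm{out}}\max\{a_0,a_{\max}\}}$; your cruder constants are still admissible.
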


\begin{remark}The proof gives explicit formulas for $\mathscr C_a$ and $\mathscr C_f$ in \eqref{eq:C_a_constant} and \eqref{eq:C_f_constant}, respectively.
\end{remark}

\begin{proof}
We first estimate the attenuation using
\eqref{eq:PhiN_H}--\eqref{eq:Delta_a:ND}, and then the
source using \eqref{eq:PhiD_H}. Fix $m\in\{1,\dots,K\}$, and write $H:=H^{(1)}-H^{(2)}$, where
\[
    H^{(i)}:=R_{a^{(i)}}f^{(i)}(\psi_m,\cdot),
    \qquad i=1,2.
\]
The bounds \eqref{eq:L_global_bounds} hold for $L_{\tau_m}$.
We set
\begin{equation}\label{eq:A_delta}
    A_\Delta
    :=
    \max\{|a_{\min}-a_0|,|a_{\max}-a_0|\},
    \quad
    A:=\max\{a_0,a_{\max}\},\quad b_i:=Da^{(i)}(y_{\tau_m},\theta_{\tau_m}).
\end{equation}
Then $|a_m^{(i)}-a_0|\le A_\Delta$. Since a ray travels at most
$2\ro$ inside $\Omega$, we also have
\begin{equation}\label{eq:beam_uniform_bound}
    0\le
  b_i
    \le 2\ro A,
    \qquad i=1,2.
\end{equation}
Proposition~\ref{prop:existence_admissible_family} gives $F_m^{(a^{(i)},f^{(i)})}(\tau_m)=f_m^{(i)}\ge f_{\min}$.
Using \eqref{eq:PhiD_H}, \eqref{eq:beam_uniform_bound} and \eqref{eq:L_global_bounds}, we obtain
\begin{equation}\label{eq:BD}
    \Phi_D[H^{(i)}]=\sqrt{2rL_{\tau_m}}\,f_m^{(i)}e^{-b_i},\quad
\bigl|\Phi_D[H^{(i)}]\bigr|\ge\sqrt{2rL_{\min}}\,f_{\min}\,e^{-2\ro A}=:\mathcal B_D>0.
\end{equation}
By \eqref{eq:PhiN_H}, we have \[
    \Phi_N[H^{(i)}]=(a_m^{(i)}-a_0)\sqrt{2rL_{\tau_m}}\,\Phi_D[H^{(i)}].
\] Subtracting the two identities, we obtain
\begin{multline*}
    \bigl(a_m^{(1)}-a_m^{(2)}\bigr)\sqrt{2rL_{\tau_m}}\,\Phi_D[H^{(1)}]\\
    =\bigl(\Phi_N[H^{(1)}]-\Phi_N[H^{(2)}]\bigr)-(a_m^{(2)}-a_0)\sqrt{2rL_{\tau_m}}\,\bigl(\Phi_D[H^{(1)}]-\Phi_D[H^{(2)}]\bigr).
\end{multline*}
By linearity of the functionals at $\tau_m$, we have\[
\begin{aligned}
    \Phi_N[H^{(1)}]-\Phi_N[H^{(2)}]&=\Phi_N[H],\\
    \Phi_D[H^{(1)}]-\Phi_D[H^{(2)}]&=\Phi_D[H].
\end{aligned}
\] Consequently, we obtain
\[
    \bigl|a_m^{(1)}-a_m^{(2)}\bigr|
    \le\frac{|\Phi_N[H]|+A_\Delta\sqrt{2rL_{\max}}\,|\Phi_D[H]|}{\sqrt{2rL_{\min}}\,\mathcal B_D}
    \le\frac{\max\{1,A_\Delta\sqrt{2rL_{\max}}\}}{2rL_{\min}f_{\min}e^{-2\ro A}}\,\|H\|_{\tau_m}.
\]
Taking the maximum over $m$ proves \eqref{eq:a_stability} with
\begin{equation}\label{eq:C_a_constant}
    \mathscr C_a:=\frac{e^{2\ro A}}{2rL_{\min}f_{\min}}\max\{1,A_\Delta\sqrt{2rL_{\max}}\}.
\end{equation}
To estimate the source, recall that $$\Phi_D[H^{(i)}]=\sqrt{2rL_{\tau_m}}f_m^{(i)}e^{-b_i}.$$ Since the estimate is symmetric in the two pairs, we may assume $b_1\le b_2$. Rearranging the difference of these identities, we obtain
\[
    f_m^{(1)}-f_m^{(2)}
    =\frac{e^{b_1}}{\sqrt{2rL_{\tau_m}}}\,\Phi_D[H]+f_m^{(2)}\bigl(e^{b_1-b_2}-1\bigr).
\]
Since $1-e^{-t}\le t$ for $t\ge0$, it follows that
\[
    |f_m^{(1)}-f_m^{(2)}|
    \le\frac{e^{2\ro A}}{\sqrt{2rL_{\min}}}\,|\Phi_D[H]|+f_{\max}|b_1-b_2|.
\]

The moderator attenuation is the same for both pairs. Therefore, we obtain
\begin{align*}
    |b_1-b_2|
    &\le
    \int_0^\infty
        \left|
            a^{(1)}(y_{\tau_m}+s\theta_{\tau_m})
            -
            a^{(2)}(y_{\tau_m}+s\theta_{\tau_m})
        \right|\,ds\\
    &\le
    2\ro\,
    \|a^{(1)}-a^{(2)}\|_{\ell^\infty}.
\end{align*}
Combining this bound with Remark~\ref{rm:seminorm} and
\eqref{eq:a_stability}, and taking the maximum over $m$, proves
the source estimate with
\begin{equation}\label{eq:C_f_constant}
       \mathscr C_f
    :=
    \frac{e^{2\ro A}}{\sqrt{2rL_{\min}}}
    +2\ro f_{\max}\mathscr C_a.
\end{equation}
Both constants depend only on the global geometric and coefficient
bounds and are therefore independent of the selected tangencies.
\end{proof}

\section{Numerical experiment} \label{sec:numerics}

To complement the stability estimates, we conduct numerical experiments to examine the asymptotic behavior of the singular functionals.
 
By Definition~\ref{def:functionals} and \eqref{eq:data_stability_seminorm}, we have
\begin{equation*}
    \| R_{a}f - R_{\tilde{a}}\tilde{f} \|_{\star} = \max_{1 \leq m \leq K} \|  R_{a}f (\psi_m, \cdot ) - R_{\tilde{a}}\tilde{f} (\psi_m, \cdot ) \|_{\tau_m},
\end{equation*}
for simple tangencies $\tau_m \in \mcS_m$ chosen to satisfy \eqref{eq:clear_tangency}. Here we consider the case where $(a,f)$ and $(\tilde{a},\tilde{f})$ differ only at a single rod $B_p$ and focus on its simple tangencies. With such a simple phantom, we can easily define the rod locations and radii, and then compute exact projections.

Given the chosen parameters, we can estimate the differences $\Phi_D[H]$ and $\Phi_N[H]$ of \eqref{eq:PhiD-def} and \eqref{eq:PhiN-def} by analytically computing the ray and chord lengths for the given ray angle and fixed boundary point $x(\psi)$. However, the angular partial derivatives of the data $\partial_\omega R_a f \left( \psi_p, \omega \right)$ need to be estimated with finite differences, which makes taking the limit $\omega \to \omega_0^{\etan}$ numerically unstable. Hence, we denote $\gamma^2 = | \omega - \omega_0 |,$ where the rays again approach from inside the disk for the quantities to be meaningful. Notice that by Lemma~\ref{lm:side}, we have $|\omega - \omega_0| = \etan (\omega - \omega_0)$ when taking the one-sided limits $\omega \to \omega_0^{\etan}$.

Now for $\gamma > 0$, we can define an auxiliary function $\varphi_+ (\gamma) := H(\omega_0 + \etan \gamma^2)$ and by differentiating, we get
\begin{align}
    \varphi_+'(\gamma) &= 2 \etan \gamma H'(\omega_0 + \etan \gamma^2) \\
    \Rightarrow \ \lim_{\gamma \to 0^+} \frac{1}{2} \varphi_+'(\gamma) &= \lim_{\omega \to \omega_0^{\epsilon_\tau}} |\omega - \omega_0|^{\frac{1}{2}}  H' (\omega_0 + (\omega - \omega_0)) = \Phi_D[H], \label{eq:stablePhiD}
\end{align}
where we estimate $\varphi_+'$ with forward difference of fixed step size $h$ by
\begin{align}
    \frac{1}{2} \varphi_+'(\gamma) &\approx \frac{1}{2} \frac{\varphi_+(\gamma + h) - \varphi_+(\gamma)}{h} = \frac{1}{2} \frac{H \big( \omega_0 + \etan (\gamma + h)^2 \big) - H\big( \omega_0 + \etan \gamma^2 \big)}{h}. \label{eq:num_lim_PhiD}
\end{align}

Similarly, we can define a second auxiliary function $\varphi_{-} (\gamma) := H(\omega_0 - \etan \gamma^2)$ for the reflected direction, and differentiate both of them to get
\begin{align*}
    \varphi_{+}''(\gamma) &= 4 \gamma^2 H''(\omega_0 + \etan \gamma^2) + 2 \etan H'(\omega_0 + \etan \gamma^2), \\
    \varphi_{-}'(\gamma) &= - 2 \etan \gamma H'(\omega_0 - \etan \gamma^2).
\end{align*}
Then
\begin{align*}
    &\lim_{\gamma \to 0^+}  \frac{1}{4} \varphi_{+}'' (\gamma) + \frac{1}{4 \gamma} \varphi_{-}'(\gamma)  \\
    = &\lim_{\omega \to \omega_0^\etan}  \gamma^2 H''(\omega_0 + (\omega - \omega_0)) + \frac{\etan}{2} H'(\omega_0 + (\omega - \omega_0)) - \frac{\etan}{2} H'(\omega_0 - (\omega - \omega_0)) =  \Phi_N[H] .
\end{align*}
These we can estimate using first and second order finite differences
\begin{align}
    \frac{1}{4} \varphi_{+}'' (\gamma) + \frac{1}{4 \gamma} \varphi_{-}'(\gamma) \approx & \frac{1}{4} \frac{\varphi_{+}(\gamma + h) - 2 \varphi_{+}(\gamma) + \varphi_{+}(\gamma - h)}{h^2} + \frac{1}{4\gamma} \frac{\varphi_{-}(\gamma + h) - \varphi_{-}(\gamma)}{h} \nonumber \\
    =& \frac{1}{4} \frac{H \big( \omega_0 + \etan (\gamma + h)^2 \big) - 2 H \big( \omega_0 + \etan \gamma^2 \big) + H \big( \omega_0 + \etan (\gamma - h)^2 \big)}{h^2} \nonumber \\
    &+ \frac{1}{4\gamma} \frac{H \big( \omega_0 - \etan (\gamma + h)^2 \big) - H \big( \omega_0 - \etan \gamma^2 \big)}{h}. \label{eq:num_lim_PhiN}
\end{align}
We can also use $\varphi_-$ in place of $\varphi_+$ to estimate $\Phi_D$ for rays outside the rod. Recall that $\Phi_N$ is not defined for such rays.

With the approximations \eqref{eq:num_lim_PhiD} and \eqref{eq:num_lim_PhiN}, we fix the finite difference step length $h = 10^{-4}$ and compute the values of $\varphi_+'(\gamma)$ and $\varphi_{+}''(\gamma) + \gamma^{-1} \varphi_{-}'(\gamma)$ for different angular values on both sides of the tangential angle $\gamma_0$. We compute these estimates for three different perturbations of the rod $B_p$. One where the emission is 20\% lower, one where the attenuation is 50\% higher and one with both at the same time. While our domain radius ($\ro = 165$ mm) and attenuation values ($a_0 = 0.0085$ mm$^{-1}, a_p = 0.14$ mm$^{-1}$) are realistic, the rod diameter chosen ($r = 8.25$ mm) is roughly twice as large as in any real fuel assembly. The larger radius makes the limits slightly more stable to compute. Moreover, the phantom with at most two rods is much simpler than any real assembly, but suitable for our tests.

We also consider two different types of simple tangencies illustrated in Figure~\ref{fig:setup}: (i) a ray which is only tangent to the perturbed rod and never meets any other rods; and (ii) a ray which is tangent to the perturbed rod but first passes through another rod earlier along the ray. Note that in both cases the \emph{forward ray} from the tangency to the detector meets no other rods as required. The numerical derivative estimates of the different perturbations for the two rays are shown in Figure~\ref{fig:plot_case_i} and Figure~\ref{fig:plot_case_ii}, respectively. Proposition~\ref{prop:HR} also gives an alternative way of obtaining the limit values $\Phi_D, \Phi_N$ without differentiation. We have estimated these values and included them in the plots with dashed lines, matching the colors of the different perturbations. Note that these values are only obtained at the tangency, where $\omega = \omega_0$.

\begin{figure}[ht]
    \centering
    \begin{tikzpicture}
        \node[anchor=west] (c1) at (0,0) {\includegraphics[width=0.4\linewidth]{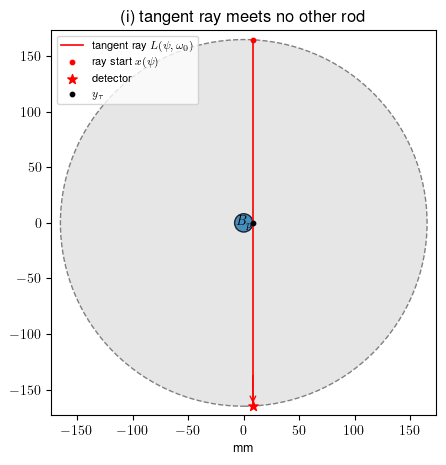}};
        
        \node[anchor=west] (c3) at (8cm,0) {\includegraphics[width=0.4\linewidth]{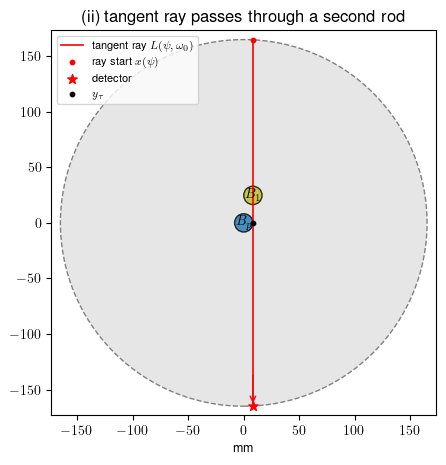}};
    \end{tikzpicture}
    
    \caption{Geometric setup illustrating two different simple tangencies. In (i), the tangent ray meets no other rods. In (ii), the tangent ray passes through a second rod.}
    \label{fig:setup}
\end{figure}

\begin{figure}[ht]
    \centering
    \includegraphics[width=1\linewidth]{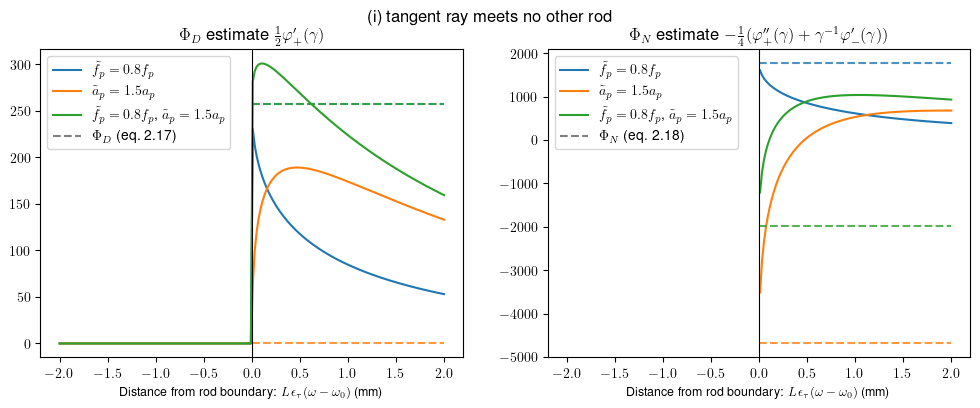}
    \caption{Numerical estimates of $\Phi_D$ and $\Phi_N$ for the ray (i) that meets no other rods. The three tested perturbations of emission and attenuation are shown in different colors.}
    \label{fig:plot_case_i}
\end{figure}

\begin{figure}[ht]
    \centering
    \includegraphics[width=1\linewidth]{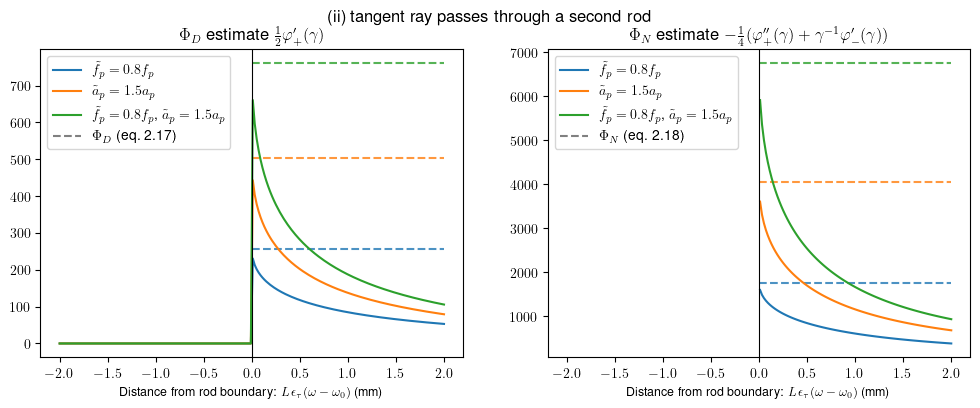}
    \caption{Numerical estimates of $\Phi_D$ and $\Phi_N$ for the ray (ii) that passes through another rod. The three tested perturbations of emission and attenuation are shown in different colors.}
    \label{fig:plot_case_ii}
\end{figure}

As can be seen from Figures~\ref{fig:plot_case_i} and \ref{fig:plot_case_ii}, the singularity at the tangential ray and vanishing data are captured well, and the angular derivative estimates tend to the values predicted by Proposition~\ref{prop:HR}. However, at that scale it is still hard to judge how accurately the limit value is obtained, in particular if the jump in attenuation values should be computed.

Figure~\ref{fig:Phi_error_ii} shows the relative difference between the estimated values of $\Phi_D[H]$ and $\Phi_N[H]$ compared to the known values, for decreasing values of $\gamma = \sqrt{| \omega - \omega_0 |}$ for the case (ii) where the tangent ray passes through a second rod. Based on this test, the desired asymptotic behavior is obtained for every tested perturbation when $\gamma < 10^{-2}$ or $|\omega - \omega_0 | < 10^{-4}$. However, at around $\gamma \approx 10^{-4}$, which is also the step size $h$ for the numerical derivatives, the estimates of $\Phi_N$ become unstable. In our tests trying smaller values of $h$ or $\gamma < h$ both lead to numerical instabilities. 
We should be aware of these limitations for the next part.

\begin{figure}
    \centering
    \includegraphics[width=1\linewidth]{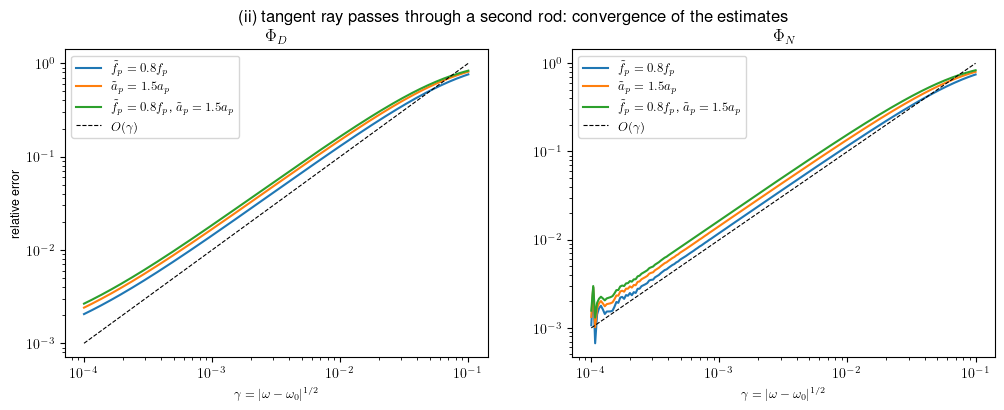}
    \caption{Plot of the relative difference between the estimated and known values of $\Phi_D$ (right) and $\Phi_N$ (left) for decreasing values of the off-tangent angle $\omega$. In addition the $\mathcal{O}(\gamma) = \mathcal{O}(\sqrt{| \omega - \omega_0|}$ asymptotics are shown with black dashed lines.}
    \label{fig:Phi_error_ii}
\end{figure}

Rather than a difference in perturbed inputs, from now on we only consider single input $(a, f)$ and denote $H(\omega) = R_a f(\psi, \omega)$. 
The clean numerical asymptotics for $\gamma \in [5 \cdot 10^{-4}, 10^{-2}]$ should be sufficient for quadratic fit to estimate the value at the limit $\gamma \to 0$. We may then find the attenuation and emission values $a_p$ and $f_p$ for the rod $B_p$, we are tangent to, using formulas from remark~\ref{remark:reconstruction-procedure} as
\begin{align*}
    a_p = a_0 + \frac{\Phi_N[H]}{\sqrt{2rL_{\tau}}\,\Phi_D[H]},
    \qquad
    f_p = \frac{e^{Da(y_{\tau},\theta_{\tau})}}{\sqrt{2rL_{\tau}}} \Phi_D[H] + (a_p - a_0) u(y_\tau, \theta_\tau).
\end{align*}

These values with the relative error are shown in Table~\ref{tab:recons} for the three different variants of $(\tilde a, \tilde f)$ and two geometric setups (i) and (ii).

\begin{table}[htbp]
    \centering
    \begin{tabular}{ccc|clcl}
                            & True $a_p$ & True $f_p$ & $a_p$ & rel. error & $f_p$ & rel. error \\ \hline
        \multirow{3}{*}{(i)}& 0.14 & 80  & 0.14009 & 0.0007 & 79.944 & 0.0007 \\
                            & 0.21 & 100 & 0.21022 & 0.0010 & 99.887 & 0.0011 \\
                            & 0.21 & 80  & 0.21022 & 0.0010 & 79.910 & 0.0011 \\ \hline
        \multirow{3}{*}{(ii)}&0.14 & 80  & 0.13992 & 0.0005 & 79.952 & 0.0006 \\
                            & 0.21 & 100 & 0.21031 & 0.0015 & 100.187& 0.0019 \\
                            & 0.21 & 80  & 0.21025 & 0.0012 & 80.179 & 0.0022 \\
    \end{tabular}
    \caption{Attenuation and emission values of one rod reconstructed using the asymptotic limits of $\Phi_D$ and $\Phi_N$ at a perfectly tangential ray.}
    \label{tab:recons}
\end{table}

\subsection*{Acknowledgments}
The work of T.~Heikkil\"a, T.~Helin, S.R.J., and J.R. was supported by the Research Council of Finland through the Flagship of Advanced Mathematics for Sensing, Imaging and Modelling (decision number 359183). In addition the work of T.~Heikkil\"a and T.~Helin was supported by the Business Finland TomoHQ project. The work of S.R.J. and J.R. was also supported by the Jenny and Antti Wihuri Foundation. The work of S.H. was supported by the Engineering and Physical Sciences Research Council (EPSRC) through grant EP/V007742/1. S.R.J. acknowledges the support of the DKO Trust and the European Mathematical Society for his visit to the University of Manchester,  where part of this work was completed, and thanks the University of Manchester for its hospitality. In addition, J.R. was supported by the Emil Aaltonen Foundation.

\subsection*{Data availability statement}
 The Python code for our numerical examples with instructions is available at \url{github.com/tommheik/PGET_stability}.

\subsection*{Conflict of interest}
The authors declare no potential conflicts of interest with respect to the research, authorship, or publication of this article.

\subsection*{Declaration of generative AI use}
During the preparation of this work, the authors used Claude Opus 5/5.5 and Fable 5.1 to help simplify certain calculations, to improve the presentation and to help implement the numerical experiments. The authors reviewed and verified the AI-assisted content.
 \bibliography{references} 

@article{Kunyansky:2001,
  author  = {Kunyansky, Leonid A.},
  title   = {A new {SPECT} reconstruction algorithm based
             on the {Novikov} explicit inversion formula},
  journal = {Inverse Problems},
  volume  = {17},
  number  = {2},
  pages   = {293--306},
  year    = {2001}
}

@incollection {Finch:2003,
    AUTHOR = {Finch, David V.},
     TITLE = {The attenuated x-ray transform: recent developments},
 BOOKTITLE = {Inside out: inverse problems and applications},
    SERIES = {Math. Sci. Res. Inst. Publ.},
    VOLUME = {47},
     PAGES = {47--66},
 PUBLISHER = {Cambridge Univ. Press, Cambridge},
      YEAR = {2003},
      ISBN = {0-521-82469-9},
   MRCLASS = {44A12 (30E20 92C55)},
  MRNUMBER = {2029677},
MRREVIEWER = {Eric\ Todd\ Quinto},
}

@article {Boman:Stromberg:2004,
    AUTHOR = {Boman, Jan and Str\"omberg, Jan-Olov},
     TITLE = {Novikov's inversion formula for the attenuated {R}adon
              transform---a new approach},
   JOURNAL = {J. Geom. Anal.},
  FJOURNAL = {The Journal of Geometric Analysis},
    VOLUME = {14},
      YEAR = {2004},
    NUMBER = {2},
     PAGES = {185--198},
      ISSN = {1050-6926,1559-002X},
   MRCLASS = {44A12 (47G10 92C55)},
  MRNUMBER = {2051682},
MRREVIEWER = {Fritz\ Keinert},
       DOI = {10.1007/BF02922067},
       URL = {https://doi-org.ezproxy.cc.lut.fi/10.1007/BF02922067},
}

@article {Natterer:2001:inversion,
    AUTHOR = {Natterer, F.},
     TITLE = {Inversion of the attenuated {R}adon transform},
   JOURNAL = {Inverse Problems},
  FJOURNAL = {Inverse Problems. An International Journal on the Theory and
              Practice of Inverse Problems, Inverse Methods and Computerized
              Inversion of Data},
    VOLUME = {17},
      YEAR = {2001},
    NUMBER = {1},
     PAGES = {113--119},
      ISSN = {0266-5611,1361-6420},
   MRCLASS = {44A12},
  MRNUMBER = {1818495},
       DOI = {10.1088/0266-5611/17/1/309},
       URL = {https://doi-org.ezproxy.cc.lut.fi/10.1088/0266-5611/17/1/309},
}

@article {Hertle:1988,
    AUTHOR = {Hertle, Alexander},
     TITLE = {The identification problem for the constantly attenuated
              {R}adon transform},
   JOURNAL = {Math. Z.},
  FJOURNAL = {Mathematische Zeitschrift},
    VOLUME = {197},
      YEAR = {1988},
    NUMBER = {1},
     PAGES = {13--19},
      ISSN = {0025-5874,1432-1823},
   MRCLASS = {44A15 (47B38 92A07)},
  MRNUMBER = {917847},
MRREVIEWER = {Frank\ Natterer},
       DOI = {10.1007/BF01161627},
       URL = {https://doi-org.ezproxy.cc.lut.fi/10.1007/BF01161627},
}

@article {Holman:Richardson:2020,
    AUTHOR = {Holman, Sean and Richardson, Philip},
     TITLE = {S{PECT} with a multi-bang assumption on attenuation},
   JOURNAL = {Inverse Problems},
  FJOURNAL = {Inverse Problems. An International Journal on the Theory and
              Practice of Inverse Problems, Inverse Methods and Computerized
              Inversion of Data},
    VOLUME = {36},
      YEAR = {2020},
    NUMBER = {12},
     PAGES = {125005, 31},
      ISSN = {0266-5611,1361-6420},
   MRCLASS = {65N21 (35R30 44A12)},
  MRNUMBER = {4186175},
MRREVIEWER = {Thomas\ Schuster},
       DOI = {10.1088/1361-6420/abab59},
       URL = {https://doi-org.ezproxy.cc.lut.fi/10.1088/1361-6420/abab59},
}

@article {Tretiak:1980,
    AUTHOR = {Tretiak, Oleh and Metz, Charles},
     TITLE = {The exponential {R}adon transform},
   JOURNAL = {SIAM J. Appl. Math.},
  FJOURNAL = {SIAM Journal on Applied Mathematics},
    VOLUME = {39},
      YEAR = {1980},
    NUMBER = {2},
     PAGES = {341--354},
      ISSN = {0036-1399},
   MRCLASS = {44A15},
  MRNUMBER = {588505},
MRREVIEWER = {L.\ Debnath},
       DOI = {10.1137/0139029},
       URL = {https://doi-org.ezproxy.cc.lut.fi/10.1137/0139029},
}

@article {Solmon:1995,
    AUTHOR = {Solmon, Donald C.},
     TITLE = {The identification problem for the exponential {R}adon
              transform},
   JOURNAL = {Math. Methods Appl. Sci.},
  FJOURNAL = {Mathematical Methods in the Applied Sciences},
    VOLUME = {18},
      YEAR = {1995},
    NUMBER = {9},
     PAGES = {687--695},
      ISSN = {0170-4214,1099-1476},
   MRCLASS = {44A12},
  MRNUMBER = {1339930},
       DOI = {10.1002/mma.1670180903},
       URL = {https://doi-org.ezproxy.cc.lut.fi/10.1002/mma.1670180903},
}

@article {Arbuzov:Bukhegim:Kazantsev:1998,
    AUTHOR = {Arbuzov, {\`E}. V. and Bukhge{\u{\i}}m, A. L. and Kazantsev, S. G.},
     TITLE = {Two-dimensional tomography problems and the theory of {$A$}-analytic functions},
   JOURNAL = {Siberian Adv. Math.},
  FJOURNAL = {Siberian Advances in Mathematics},
    VOLUME = {8},
      YEAR = {1998},
    NUMBER = {4},
     PAGES = {1--20},
      ISSN = {1055-1344,1934-8126},
   MRCLASS = {44A12 (30G20)},
  MRNUMBER = {1666500},
MRREVIEWER = {Faizo\ A.\ Shamoyan},
}

@book {Natterer:2001:book,
    AUTHOR = {Natterer, F.},
     TITLE = {The mathematics of computerized tomography},
 PUBLISHER = {B. G. Teubner, Stuttgart; John Wiley \& Sons, Ltd.,
              Chichester},
      YEAR = {1986},
     PAGES = {x+222},
      ISBN = {3-519-02103-X},
   MRCLASS = {44A15 (92A07)},
  MRNUMBER = {856916},
MRREVIEWER = {L.\ A.\ Shepp},
}

@article {Bubba:2020:pget,
    AUTHOR = {Backholm, Rasmus and Bubba, Tatiana A. and
              B\'elanger-Champagne, Camille and Helin, Tapio and Dendooven,
              Peter and Siltanen, Samuli},
     TITLE = {Simultaneous reconstruction of emission and attenuation in
              passive gamma emission tomography of spent nuclear fuel},
   JOURNAL = {Inverse Probl. Imaging},
  FJOURNAL = {Inverse Problems and Imaging},
    VOLUME = {14},
      YEAR = {2020},
    NUMBER = {2},
     PAGES = {317--337},
      ISSN = {1930-8337,1930-8345},
   MRCLASS = {45Q05 (65R32)},
  MRNUMBER = {4155068},
       DOI = {10.3934/ipi.2020014},
       URL = {https://doi-org.ezproxy.cc.lut.fi/10.3934/ipi.2020014},
}

@book {primer:book,
    AUTHOR = {Krantz, Steven G. and Parks, Harold R.},
     TITLE = {A primer of real analytic functions},
    SERIES = {Birkh{\"a}user Advanced Texts: Basel Textbooks},
   EDITION = {Second},
 PUBLISHER = {Birkh\"auser Boston, Inc., Boston, MA},
      YEAR = {2002},
     PAGES = {xiv+205},
      ISBN = {0-8176-4264-1},
   MRCLASS = {26E05 (26-02 26E10 32B20)},
  MRNUMBER = {1916029},
MRREVIEWER = {Solomon\ Marcus},
       DOI = {10.1007/978-0-8176-8134-0},
       URL = {https://doi-org.ezproxy.cc.lut.fi/10.1007/978-0-8176-8134-0},
}

@phdthesis{Virta:2024,  
author       = {Virta, Riina},  
title        = {Gamma tomography of spent nuclear fuel for geological repository safeguards},  
school       = {University of Helsinki},  
address      = {Helsinki, Finland},  
year         = {2024},  
type         = {PhD Thesis, HIP Internal Report, HIP-2024-02},  
pages        = {60+40},  
isbn         = {978-951-51-8427-6},  
issn         = {1455-0563},  
url          = {http://urn.fi/URN:ISBN:978-951-51-8428-3},  
note         = {ISBN (PDF): 978-951-51-8428-3; ISSN (online): 2814-9459}
}

@article {Stefanov:2014,
    AUTHOR = {Stefanov, Plamen},
     TITLE = {The identification problem for the attenuated {X}-ray
              transform},
   JOURNAL = {Amer. J. Math.},
  FJOURNAL = {American Journal of Mathematics},
    VOLUME = {136},
      YEAR = {2014},
    NUMBER = {5},
     PAGES = {1215--1247},
      ISSN = {0002-9327,1080-6377},
   MRCLASS = {44A12},
  MRNUMBER = {3263897},
MRREVIEWER = {B.\ S.\ Rubin},
       DOI = {10.1353/ajm.2014.0035},
       URL = {https://doi-org.ezproxy.cc.lut.fi/10.1353/ajm.2014.0035},
}

@article {Holman:Richardson:2023,
    AUTHOR = {Holman, Sean and Richardson, Philip},
     TITLE = {Simultaneous recovery of attenuation and source density in
              {SPECT}},
   JOURNAL = {Inverse Probl. Imaging},
  FJOURNAL = {Inverse Problems and Imaging},
    VOLUME = {17},
      YEAR = {2023},
    NUMBER = {4},
     PAGES = {817--840},
      ISSN = {1930-8337,1930-8345},
   MRCLASS = {45G05 (44A12 65R10 92C55)},
  MRNUMBER = {4576183},
MRREVIEWER = {Fritz\ Keinert},
       DOI = {10.3934/ipi.2023005},
       URL = {https://doi.org/10.3934/ipi.2023005},
}

@article {Novikov:2002,
    AUTHOR = {Novikov, Roman G.},
     TITLE = {An inversion formula for the attenuated {X}-ray
              transformation},
   JOURNAL = {Ark. Mat.},
  FJOURNAL = {Arkiv f\"or Matematik},
    VOLUME = {40},
      YEAR = {2002},
    NUMBER = {1},
     PAGES = {145--167},
      ISSN = {0004-2080,1871-2487},
   MRCLASS = {44A12 (35R30 65R10 82C70 92C55)},
  MRNUMBER = {1948891},
MRREVIEWER = {Olga\ Klimenko},
       DOI = {10.1007/BF02384507},
       URL = {https://doi.org/10.1007/BF02384507},
}

@article {Novikov:2002:range,
    AUTHOR = {Novikov, R. G.},
     TITLE = {On the range characterization for the two-dimensional
              attenuated {X}-ray transformation},
   JOURNAL = {Inverse Problems},
    VOLUME = {18},
      YEAR = {2002},
    NUMBER = {3},
     PAGES = {677--700},
      ISSN = {0266-5611,1361-6420},
       DOI = {10.1088/0266-5611/18/3/310},
       URL = {https://doi.org/10.1088/0266-5611/18/3/310},
}

@incollection {Zhou:2024:survey,
    AUTHOR = {Zhou, Hanming},
     TITLE = {The weighted {X}-ray transform and applications},
 BOOKTITLE = {Microlocal analysis and inverse problems in tomography and
              geometry},
    SERIES = {Radon Ser. Comput. Appl. Math.},
    VOLUME = {30},
     PAGES = {167--185},
 PUBLISHER = {De Gruyter, Berlin},
      YEAR = {2024},
      ISBN = {978-3-11-133683-1; 978-3-11-133748-7; 978-3-11-133801-9},
   MRCLASS = {44-02 (35R30 35S05 44A12 53C24 53C65)},
  MRNUMBER = {4815455},
}

@incollection {Novikov:2019,
    AUTHOR = {Novikov, R. G.},
     TITLE = {Non-{A}belian {R}adon transform and its applications},
 BOOKTITLE = {The {R}adon transform---the first 100 years and beyond},
    SERIES = {Radon Ser. Comput. Appl. Math.},
    VOLUME = {22},
     PAGES = {115--127},
 PUBLISHER = {Walter de Gruyter, Berlin},
      YEAR = {2019},
      ISBN = {978-3-11-056085-5; 978-3-11-055951-4; 978-3-11-055941-5},
   MRCLASS = {44A12 (35F05 35P25 35R30 37K15 53C65 78A46 81U40)},
  MRNUMBER = {4059257},
       DOI = {10.1515/9783110560855-005},
       URL = {https://doi.org/10.1515/9783110560855-005},
}

@book {GIP2D,
    AUTHOR = {Paternain, Gabriel P. and Salo, Mikko and Uhlmann, Gunther},
     TITLE = {Geometric inverse problems---with emphasis on two dimensions},
    SERIES = {Cambridge Studies in Advanced Mathematics},
    VOLUME = {204},
      NOTE = {With a foreword by Andr\'{a}s Vasy},
 PUBLISHER = {Cambridge University Press, Cambridge},
      YEAR = {2023},
     PAGES = {xxiv+344},
      ISBN = {978-1-316-51087-2},
   MRCLASS = {35-02 (35R30 53C65 58J32 92C55)},
  MRNUMBER = {4520155},
}

@article{censor1979new,
  title={A new approach to the emission computerized tomography problem: simultaneous calculation of attenuation and activity coefficients},
  author={Censor, Yair and Gustafson, David E. and Lent, Arnold and Tuy, Heang},
  journal={{IEEE} Transactions on Nuclear Science},
  volume={26},
  number={2},
  pages={2775--2779},
  year={1979},
  publisher={IEEE}
}

@article{shepp1982maximum,
  title={Maximum likelihood reconstruction for emission tomography},
  author={Shepp, Lawrence A. and Vardi, Yehuda},
  journal={{IEEE} transactions on medical imaging},
  volume={1},
  number={2},
  pages={113--122},
  year={1982},
  publisher={IEEE}
}

@article{hudson1994accelerated,
  title={Accelerated image reconstruction using ordered subsets of projection data},
  author={Hudson, H. Malcolm and Larkin, Richard S.},
  journal={{IEEE} transactions on medical imaging},
  volume={13},
  number={4},
  pages={601--609},
  year={1994},
  publisher={IEEE}
}

@article{natterer1983computerized,
  title={Computerized tomography with unknown sources},
  author={Natterer, Frank},
  journal={{SIAM} J. Appl. Math.},
  volume={43},
  number={5},
  pages={1201--1212},
  year={1983},
  publisher={SIAM}
}

@article{bronnikov1999numerical,
  title={Numerical solution of the identification problem for the attenuated {Radon} transform},
  author={Bronnikov, Andrei V.},
  journal={Inverse Problems},
  volume={15},
  number={5},
  pages={1315--1324},
  year={1999}
}

@inproceedings{white2018application,
  title={Application of passive gamma emission tomography ({PGET}) for the verification of spent nuclear fuel},
  author={White, Timothy and Mayorov, Mikhail and Lebrun, Alain and Peura, Pauli and Honkamaa, Tapani and Dahlberg, Joakim and Keubler, Jens and Ivanov, Victor and Turunen, Asko},
  booktitle={INMM 59th Annual Meeting, Baltimore, Maryland, USA},
  year={2018}
}

@article{virta2022improved,
  title={Improved Passive Gamma Emission Tomography image quality in the central region of spent nuclear fuel},
  author={Virta, Riina and Bubba, Tatiana A. and Moring, Mikael and Siltanen, Samuli and Honkamaa, Tapani and Dendooven, Peter},
  journal={Scientific Reports},
  volume={12},
  number={1},
  pages={12473},
  year={2022},
  publisher={Nature Publishing Group UK London}
}

@article{dicken1999new,
  title={A new approach towards simultaneous activity and attenuation reconstruction in emission tomography},
  author={Dicken, Volker},
  journal={Inverse Problems},
  volume={15},
  number={4},
  pages={931--960},
  year={1999}
}

@article{svard2015applicability,
  title={Applicability of a set of tomographic reconstruction algorithms for quantitative {SPECT} on irradiated nuclear fuel assemblies},
  author={Sv{\"a}rd, Staffan Jacobsson and Holcombe, Scott and Grape, Sophie},
  journal={Nuclear Instruments and Methods in Physics Research Section A: Accelerators, Spectrometers, Detectors and Associated Equipment},
  volume={783},
  pages={128--141},
  year={2015},
  publisher={Elsevier}
}

@article{fang2021quantitative,
  title={Quantitative imaging and automated fuel pin identification for passive gamma emission tomography},
  author={Fang, Ming and Altmann, Yoann and Della Latta, Daniele and Salvatori, Massimiliano and Di Fulvio, Angela},
  journal={Scientific reports},
  volume={11},
  number={1},
  pages={2442},
  year={2021},
  publisher={Nature Publishing Group UK London}
}

@inproceedings{white2017spect,
  title={{SPECT} reconstruction and analysis for the inspection of spent nuclear fuel},
  author={White, Timothy and Mayorov, Mikhail and Deshmukh, Nikhil and Miller, Erin and Smith, L. Eric and Dahlberg, Joakim and Honkamaa, Tapani},
  booktitle={2017 IEEE Nuclear Science Symposium and Medical Imaging Conference (NSS/MIC)},
  pages={1--2},
  year={2017},
  organization={IEEE}
}
	
	\bibliographystyle{alpha}

\end{document}